\newenvironment{proof}{\noindent{\bf Proof.}\hspace*{1em}}{\bigskip}
\newcommand\Proof{\noindent{\it Proof.\/ }}
\newcommand\Endproof{\hfill$\Box$\medbreak}
\newtheorem{theorem}{Theorem}[section]
\newtheorem{corollary}{Corollary}[theorem]
\newtheorem{lemma}{Lemma}[section]
\newtheorem{rem}{Remark}[section]
\newtheorem{definition}{Definition}[section]
\newtheorem{defn}{Definition}[section]
\newtheorem{statement}{Proposition}[section]
\def\JC{\operatorname{JC}}
\def\ch{\operatorname{Char}}
\def\Id{\operatorname{Id}}
\def\Ad{\operatorname{Ad}}
\def\End{\operatorname{End}}
\def\Nilp{\operatorname{Solv}}
\def\Aut{\operatorname{Aut}}
\def\PI{\operatorname{PI}}
\def\Var{\operatorname{Var}}
\def\goth{\mathfrak}
\newcommand\C{{\mathbb C}}
\newcommand\A{{\mathbb A}}
\renewcommand\k{{\bf k}}
\newcommand\spec{{\mathsf{Spec}\,}}
\title{The Jacobian Conjecture, together with Specht and Burnside-type problems}
\def\@footnotetext#1{\insert\footins{%
\footnotesize
    \interlinepenalty\interfootnotelinepenalty

    \splittopskip\footnotesep

    \splitmaxdepth \dp\strutbox \floatingpenalty \@MM

    \hsize\columnwidth \@parboxrestore

    \edef\@currentlabel{\csname
p@footnote\endcsname\@thefnmark}\@makefntext
    {\rule{\z@}{\footnotesep}\ignorespaces
#1\strut}}}
\def\abstract{\small\quotation{\hskip-\parindent\sc Abstract.}}
\def\classification{\@ifnextchar [{\@xfootnotenext}%
    {\begingroup\let\protect\noexpand
    \xdef\@thefnmark{}\endgroup
    \@footnotetext}}
\begin{document}

\maketitle

\classification{
    {\it 2000 Mathematics Subject Classification:}
Primary
14E09, 14E25; Secondary 14A10, 13B25.\\
$\ast$)
  The first and third authors are supported by the Israel Science Foundation grant No.~1207/12. The research of Jie-Tai Yu was partially supported by an
RGC-GRF Grant. \\  Yagzhev was a doctoral student of the second
author, L.~ Bokut. \\ We thank I.P.~Shestakov for useful comments,
and also thank  the referees for many helpful suggestions in
improving the exposition.\\
 We are grateful to Yagzhev's widow
G.I.~Yagzheva, and also Jean-Yves Sharbonel for providing some
unpublished materials.}

\def\abstract{\small\quotation{\hskip-\parindent\sc Abstract.}}
{\begingroup\let\protect\noexpand%
\xdef\@thefnmark{}\endgroup
\maketitle

\begin{center}
{\bf Alexei Belov, Leonid Bokut, Louis Rowen and Jie-Tai Yu}
\end{center}
\medskip


\begin{center}
\qquad Dedicated to the memory of A.V.~Yagzhev
\end{center}


\begin{abstract}
We explore an approach to the celebrated Jacobian Conjecture by
means of identities of algebras, initiated by the brilliant
deceased mathematician, Alexander Vladimirovich Yagzhev~
(1951--2001), only partially published. This approach also
indicates some very close connections between mathematical
physics, universal algebra and automorphisms of polynomial
algebras.
\end{abstract}


{\bf Keywords:}  Jacobian conjecture, polynomial automorphisms,
universal algebra, Burnside type problems, universal algebra,
polynomial identity, deformation, quantization, operators.

\section{Introduction}

This paper explores an approach to polynomial mappings and the
Jacobian Conjecture and related questions, initiated by
A.V.~Yagzhev, whereby these questions are translated to identities
of algebras, leading to a solution  in \cite{Yagzhev8} of the
version  of the Jacobian Conjecture for free associative algebras.
(The first version, for two generators, was obtained by Dicks and
J.~Levin \cite{Dicks,DicksLevin}, and the full version by
Schofield~\cite{Schofield}.) We start by laying out the basic
framework in this introduction. Next, we set up Yagzhev's
correspondence to algebras in \S\ref{operad}, leading to the basic
notions of weak nilpotence and Engel type. In
\S\ref{SbScJCArbVar0} we discuss the Jacobian Conjecture in the
context of various varieties, including the free associative
algebra.

Given any  polynomial endomorphism $\phi$ of the $n$-dimensional
affine space $\A^n_\k=\spec \k[x_1,\dots,x_n]$ over a field $\k$,
we define its {\it Jacobian matrix} to be the matrix
$$ \left(
\partial \phi^*(x_i)/ \partial x_j\right)_{1\le i,j\le n}.$$
The determinant of the Jacobian matrix is called  the {\it
Jacobian} of $\phi$. The celebrated {\bf Jacobian Conjecture}
$\JC_n$ in dimension $n\ge 1$ asserts that {\it for any field~$\k$
of characteristic zero, any polynomial endomorphism $\phi$ of
$\A^n_\k $ having Jacobian $1$ is an
 automorphism.} Equivalently, one can say that $\phi$ preserves the
 standard top-degree differential form $dx_1\wedge\dots\wedge
dx_n\in\Omega^n(\A^n_\k)$. References to this well known problem
and related questions can be found in~\cite{BCW}, \cite{Kulikov}, and
\cite{VDEssen}.
 By the Lefschetz principle it is sufficient to consider the case  $\k=\C$;
obviously, $\JC_n$ implies $\JC_m$ if $n>m$.
  The conjecture $\JC_n$ is obviously true  in the case
$n=1$, and it is open for $n\ge 2$.

The {\bf Jacobian Conjecture}, denoted as $\JC $, is the
conjunction of the conjectures $\JC_n$ for
 all finite $n$.  The Jacobian
Conjecture has many reformulations (such as the Kernel Conjecture
and the Image Conjecture,
cf.~\cite{VDEssen,VDEssenImage,VDEssenImageA,VDEssenWenhua,VDEssenWenhua1}
for details) and is closely related to questions concerning
quantization. It is stably equivalent to the following conjecture
of Dixmier, concerning automorphisms of the Weyl algebra $W_n$,
otherwise known as the {\it quantum affine algebra}.

\medskip
{\bf Dixmier Conjecture $DC_n$:}\ {\it Does
$\End(W_n)=\Aut(W_n)$}?
\medskip

The implication $DC_n\to JC_n$ is well known,  and the inverse
implication $JC_{2n}\to DC_n$ was recently obtained independently
by Tsuchimoto \cite{Tsuch1} (using $p$-curvature) and Belov and
Kontsevich \cite{BelovKontsevich1}, \cite{BelovKontsevich} (using
Poisson brackets on the center of the Weyl algebra).
Bavula~\cite{Bavula1new} has obtained a shorter proof, and also
obtained a positive solution of an analog of the Dixmier
Conjecture for integro differential operators, cf.~\cite{Bavula1}.
He also proved that every monomorphism of the  Lie algebra of
triangular polynomial derivations is an automorphism
\cite{Bavula2} (an analog of Dixmier's conjecture).

The Jacobian Conjecture is closely related to many questions of
affine algebraic geometry concerning affine space, such as the
Cancellation Conjecture  (see Section \ref{SbScRelQuestJC}). If we
replace the variety of commutative associative algebras (and the
accompanying affine spaces) by an arbitrary algebraic variety
\label{PgAlgVar}\footnote{Algebraic geometers use  word {\it
variety}, roughly speaking, for objects whose local structure is
obtained from the solution of system of algebraic equations. In
the framework of universal algebra, this notion is used for
subcategories of algebras defined by a  given set of identities. A
deep analog of these notions is given in \cite{BelovIAN}.}, one
easily gets a counterexample to the  $\JC$. So, strategically
these questions deal with some specific properties of affine space
which we do not yet understand, and for which we do not have the
appropriate formulation apart from these very difficult questions.

 It seems that these properties do indicate some sort of quantization. From that
perspective, noncommutative analogs of these problems (in particular,
the Jacobian Conjecture and the analog of the
Cancellation Conjecture)  become interesting for free associative
algebras, and more generally, for arbitrary  varieties of algebras.

We work in the language of universal algebra, in which an algebra
is defined in terms of a set of operators, called its {\it
signature}. This approach enhances the investigation of the
Yagzhev correspondence between endomorphisms and algebras. We work
with deformations and so-called {\it packing properties} to be
introduced in Section~\ref{SbScJCArbVar0} and
Section~\ref{SbSbScPacking}, which denote specific noncommutative
phenomena which enable one to solve   the  $\JC$ for the free
associative algebra.

 From the viewpoint of universal
algebra,  the Jacobian conjecture becomes a problem of ``Burnside
type,'' by which we mean the question of whether a given finitely
generated algebraic structure satisfying given periodicity
conditions is necessarily finite, cf.~Zelmanov~\cite{Zelmanov}.
 Burnside originally posed the question
of the finiteness of a finitely generated  group satisfying the
identity $x^n=1$. (For odd $n\ge 661,$  counterexamples were found
by Novikov and Adian, and quite recently  Adian reduced
the estimate from $661$ to $101$).  Another class of
counterexamples was discovered by Ol'shanskij~\cite{Ol}.
Kurosh posed the question of local finiteness of algebras
whose elements are algebraic over the base field. For algebraicity
of bounded degree, the question has a positive solution, but
otherwise there are the Golod-Shafarevich counterexamples.

 Burnside type
problems play an important role in algebra. Their solution in
the associative case is closely tied to Specht's
problem of whether any set of polynomial identities can be deduced
from a finite subset. The $\JC$ can be  formulated in the context of whether one system of identities implies another, which also relates to Specht's problem.

In the Lie algebra case there is a similar notion. An element
$x\in L$ is called {\it Engel of degree $n$} if
$[\dots[[y,x],x]\dots, x]=0$ for any $y$ in the Lie algebra~$L$.
Zelmanov's result that any finitely generated Lie algebra of
bounded Engel degree is nilpotent yielded his solution of the
Restricted Burnside Problem for groups. Yagzhev introduced the
notion of {\it Engelian} and {\it weakly nilpotent} algebras of
arbitrary signature (see Definitions \ref{Engtyp},
\ref{DfWeaklyNilp}), and proved that the $\JC$ is equivalent to
the question of weak nilpotence of algebras of Engel type
satisfying a system of Capelli identities, thereby showing the
relation of the  $\JC$ with problems of Burnside type.

\medskip

 {\bf A negative approach.} Let us mention
a way of constructing counterexamples. This approach, developed by
Gizatullin, Kulikov,  Shafarevich, Vitushkin, and others,
is related to
decomposing polynomial mappings into the composition of
$\sigma$-processes
\cite{Gizatullin,Kulikov,Shafarevich,Vitushkin,Vitushkin1,Vitushkin2}.
It allows one to solve some polynomial automorphism problems,
including tameness problems, the most famous of which is  {\em
Nagata's Problem} concerning the wildness of  Nagata's
automorphism
 $$  (x,y,z)\mapsto(x-2(xz+y^2)y-(xz+y^2)^2z,\, y+(xz+y^2)z,\, z),$$
cf.~\cite{Nag}. Its solution by Shestakov and Umirbaev~\cite{ShUm} is the major advance in this area in the last decade. The Nagata automorphism can be constructed as a
product of automorphisms of $K(z)[x,y]$, some of them having
non-polynomial coefficients (in $K(z)$).
 The following theorem of
Abhyankar-Moh-Suzuki \cite{AM,Su} and \cite{M} can be viewed in
this context:

\medskip
\textbf{AMS Theorem.}  If $f$ and $g$ are polynomials in $K[z]$ of
degrees $n$ and $m$ for which $K[f, g] = K[z]$, then $n$ divides
$m$ or $m$ divides $n$.\medskip

Degree estimate theorems are polynomial analogs to Liouville's
approximation theorem in algebraic number theory
(\cite{BonnetVenerau,Kuroda,YuYungChang,MLY}). T.~Kishimoto has
proposed  using a program of Sarkisov, in particular for Nagata's
Problem. Although difficulties remain in applying
$``\sigma$-processes'' (decomposition of birational mappings into
standard blow-up operations) to the affine case, these may provide
new insight. If we consider affine transformations of the plane,
we have relatively simple singularities at infinity, although for
bigger dimensions they can be more complicated. Blow-ups provide
some understanding of birational mappings with singularities.
Relevant information may be provided in the affine case.  The
paper~\cite{BEW} contains some deep
 considerations about singularities.

\section{The Jacobian Conjecture and Burnside type problems, via algebras}\label{operad}

In this section we translate the  Jacobian Conjecture to the
language of algebras and their identities. This can be done at two
levels:  At the level of the algebra obtained from a polynomial
mapping, leading to the notion of {\it weak nilpotence} and {\it
Yagzhev algebras} and at the level of the differential and the
algebra arising from the Jacobian, leading to the  notion of {\it
Engel type}. The Jacobian Conjecture   is  the link between these
two notions.

\subsection{The Yagzhev correspondence}\label{Yagcor}

\subsubsection{Polynomial mappings in universal algebra}

Yagzhev's approach is to pass from algebraic geometry to universal
algebra. Accordingly, we work in the framework of a universal
algebra $A$ having signature $\Omega$. $A ^{(m)}$ denotes $A \times
\dots \times A $, taken $m$ times.

We fix a commutative, associative base ring~$C$, and consider
$C$-modules equipped with extra {\em operators} $A ^{(m)}\to A,$
which we call $m$-{\em ary}. Often one of these operators will be
(binary) multiplication. These operators will be multilinear,
i.e., linear with respect to each argument. Thus, we can define
the {\em degree} of an operator to be its number of arguments. We
say an operator $\Psi(x_1, \dots, x_m)$ is {\em symmetric} if
$\Psi(x_1, \dots, x_m) = \Psi(x_{\pi(1)}, \dots, x_{\pi(m)})$ for
all permutations~ $\pi$.

\begin{definition}\label{DefPolMap}
A {\em string} of operators is defined inductively. Any  operator
$\Psi(x_1, \dots, x_m)$ is a string of degree $m$, and if $s_j$
are strings of degree $d_j,$ then $\Psi(s_1, \dots, s_m)$ is a
string of degree $\sum_{j=1}^m d_j$.  A mapping
 $$\alpha: A ^{(m)} \to A$$
 is called {\em polynomial} if it can be
expressed as a sum of strings of operators of the algebra $A$. The
{\em degree} of the mapping is the maximal length of these
strings. \end{definition}

{\bf Example.} Suppose an algebra $A$ has two extra operators: a
binary operator $\alpha(x,y)$ and a tertiary operator
$\beta(x,y,z)$. The mapping $F: A\to A$ given by $\ x\to
x+\alpha(x,x)+\beta(\alpha(x,x),x,x)$ is a polynomial mapping
of~$A$, having degree $4$. Note that if $A$ is finite dimensional
as a vector space, not every polynomial mapping of $A$ as an
affine space is a polynomial mapping of $A$ as an algebra.
\medskip

\subsubsection{Yagzhev's
correspondence  between polynomial mappings and algebras}

Here we associate an algebraic structure to each polynomial map.
Let $V$ be an $n$-dimensional vector space over the field $\k$,
and $F:V\to V$  be a polynomial mapping of degree $m$.
 Replacing $F$ by the composite $TF$,  where $T$ is a translation such that
$TF(0)=0$, we may assume that $F(0) =0$. Given a base
$\{\vec{e}_i\}_{i=1}^n$ of $V$, and for an element $v$ of $V$
written uniquely as a sum $\sum x_i\vec{e}_i$, for $x_i \in \k$,
the coefficients of $\vec{e}_i$ in $F(v)$ are (commutative)
polynomials in the~ $x_i$. Then $F$ can be written in the
following form:
$$
x_i\mapsto F_{0i}(\vec{x})+F_{1i}(\vec{x})+\dots+F_{mi}(\vec{x})
$$
where each $F_{\alpha i}(\vec{x})$ is a homogeneous form of degree
$\alpha$, i.e.,
$$
F_{\alpha i}(\vec{x})=\sum_{j_1 + \dots + j_n =\alpha}\kappa_J
x_1^{j_1}\cdots x_n^{j_n},
$$
with $F_{0i}=0$ for all $i$,  and $F_{1i}(\vec
x)=\sum_{k=1}^n \mu_{ki}x_k$.

 We are
interested in invertible mappings that have a nonsingular Jacobian
matrix $(\mu_{ij})$. In particular, this matrix is nondegenerate
at the origin. In this case $\det(\mu_{ij})\ne 0$, and by
composing $F$ with an affine transformation we arrive at the
situation for which $\mu_{ki}=\delta_{ki}$. Thus,  the mapping $F$
may be taken to have  the following form:

\begin{equation}   \label{Eq1Fk}
x_i\to x_i-\sum_{k=2}^m F_{ki}.
\end{equation}

Suppose we have a mapping as in \eqref{Eq1Fk}. Then the Jacobi
matrix can be written as $E - G_1 - \dots - G_{m-1}$ where $G_i$
is an $n\times n$ matrix with entries which are homogeneous
polynomials of degree $i$. If the Jacobian is $1$, then it is
invertible with inverse a polynomial matrix (of  homogeneous
degree at most $(n-1)(m-1)$, obtained via the adjoint matrix).

If we write the inverse as
a formal power series, we   compare the homogeneous components
and get:
 \begin{equation}  \label{EqMJ}
\sum_{j_im_{j_i}=s}M_J=0,
\end{equation}
where $M_J$ is the sum of products $a_{\alpha_1}
a_{\alpha_q}$ in which the factor $a_j$ occurs $m_j$~times, and
$J$ denotes the multi-index $(j_1,\dots,j_q)$.

Yagzhev considered the cubic homogeneous mapping $\vec{x}\to
\vec{x}+(\vec{x},\vec{x},\vec{x}),$ whereby the Jacobian matrix
becomes $E-G_3.$ We return to  this case in~Remark~\ref{Jac3}.
  The   slightly more general approach
 given here presents the
Yagzhev correspondence more clearly and also provides tools for
investigating deformations and packing properties (see Section
\ref{SbSbScPacking}). Thus, we consider not only the cubic case
(i.e. when the mapping has the form $$x_i\to
x_i+P_i(x_1,\dots,x_n);\ i=1,\dots, n,$$ with $P_i$
 cubic homogenous polynomials), but the more general situation of arbitrary degree.

For any $\ell$, the set of (vector valued) forms
$\{F_{\ell,i}\}_{i=1}^n$ can be interpreted as a homogeneous
mapping $\Phi_\ell: V\to V$ of degree $\ell$.  When  $\ch(\k)$
does not divide $\ell$, we take instead the polarization of this
mapping, i.e.~the multilinear symmetric mapping
$$
\Psi_\ell:V^{\otimes\ell}\to V
$$
 such that
$$
(F_{\ell ,i}(x_1),\dots,F_{\ell,
i}(x_n))=\Psi_\ell(\vec{x},\dots,\vec{x}) \cdot \ell!
$$
Then Equation (\ref{Eq1Fk}) can be rewritten as

\begin{equation}   \label{Eq2Fk}
\vec{x}\to \vec{x}-\sum_{\ell=2}^m
\Psi_{\ell}(\vec{x},\dots,\vec{x}).
\end{equation}

We define the  algebra  $(A, \{ \Psi_{ \ell}\}),$ where $A$ is the
vector space $V$ and the~$ \Psi_{ \ell}$ are viewed as operators
$A^{\ell}\to A.$

\begin{definition}
 The \textbf{Yagzhev correspondence} is the correspondence from
 the polynomial mapping $(V,F)$ to the   algebra  $(A, \{ \Psi_{ \ell}\}).$
\end{definition}

\subsection{Translation of the invertibility condition to the language of
identities}

The next step is to bring in algebraic varities, defined in terms of identities.

\begin{definition}
 A {\em polynomial identity} (PI) of $A$ is a
polynomial mapping of~$A,$ all of whose values are identically
zero.

The {\it algebraic variety} generated by an algebra $A$, denoted
as $\Var(A)$, is the class of all algebras satisfying the same PIs
as $A$.
\end{definition}

Now we come to a crucial idea of Yagzhev:

\medskip
{\it The invertibility of $F$ and the invertibility of the
Jacobian of $F$ can be expressed via (2)  in the language of polynomial
identities.}
\medskip

%

Namely, let $y=F(x)=x-\sum_{\ell =2}^m\Psi_\ell (x)$. Then

\begin{equation}   \label{EqTerm}
F^{-1}(x)=\sum_t t(x),
\end{equation}

 \noindent where each $t$ is a {\it
term}, a formal expression in the mappings $\{\Psi_\ell \}_{\ell
=2}^m$ and the symbol $x$. Note that the  expressions
$\Psi_2(x,\Psi_3(x,x,x))$ and $\Psi_2(\Psi_3(x,x,x),x)$ are
different although they represent same element of the algebra.
Denote by $|t|$  the number of occurrences of variables, including
multiplicity, which are included in $t$.

The invertibility of $F$ means that, for all $q\ge q_0$,
\begin{equation}    \label{EqTerm2}
\sum_{|t|=q}t(a)=0, \quad \forall a \in A.
\end{equation}

Thus we have translated  invertibility of the mapping $F$ to the
language of identities. (Yagzhev had an analogous formula, where
the terms only involved $\Psi_3$.)
%

\begin{definition}     \label{DfWeaklyNilp}
  An element $a\in A$ is called {\em nilpotent} of   index $\le n$
if $$M(a, a, \dots, a)= 0$$  for each monomial $M(x_1, x_2, \dots
)$ of degree $\ge n$.   $A$~is {\em weakly nilpotent} if each
element of $A$ is nilpotent.  $A$~is {\em weakly nilpotent of
class~$k$} if each element of $A$ is nilpotent of index $k$. (Some
authors use the terminology {\it index} instead of {\em class}.)
Equation \eqref{EqTerm2} means $A$ is weakly nilpotent.

To stress this fundamental notion of Yagzhev, we define a
 {\em Yagzhev algebra} of   {\em order} $q_0$ to be a weakly nilpotent algebra, i.e.,
 satisfying the
identities  \eqref{EqTerm2}, also called the {\it system of
Yagzhev identities} arising from $F$.
\end{definition}

Summarizing, we get the following fundamental translation from
conditions on the endomorphism $F$ to identities of algebras.

\begin{theorem}
The endomorphism $F$ is invertible if and only if the
corresponding algebra  is a Yagzhev algebra of high enough order.
\end{theorem}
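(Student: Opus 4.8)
The theorem asserts an "if and only if," and both directions rest on the chain of reformulations already established in the excerpt. The plan is to trace the equivalence $F \text{ invertible} \iff \eqref{EqTerm2} \iff A \text{ is a Yagzhev algebra}$, making each link precise. The second equivalence is essentially a definitional tautology: Definition~\ref{DfWeaklyNilp} explicitly stipulates that satisfying the system of identities \eqref{EqTerm2} is what it means for $A$ to be a Yagzhev algebra (of the appropriate order $q_0$). So the real mathematical content lies in the first equivalence, connecting the geometric/algebraic invertibility of the endomorphism $F$ to the vanishing identities \eqref{EqTerm2}.

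\medskip
\textbf{Forward direction.} First I would assume $F$ is invertible and derive \eqref{EqTerm2}. Writing $y = F(x) = x - \sum_{\ell=2}^m \Psi_\ell(x)$, I would expand the inverse as the formal sum of terms in \eqref{EqTerm}, $F^{-1}(x) = \sum_t t(x)$, where each $t$ is a formal composite of the operators $\{\Psi_\ell\}$ applied to $x$. The key observation is that, grouping terms by the number $|t|$ of variable occurrences, the identity map $F^{-1}\circ F = \operatorname{Id}$ forces the homogeneous component of each degree $q$ (for $q$ large, say $q \ge q_0$) to vanish: this is precisely $\sum_{|t|=q} t(a) = 0$ for all $a \in A$. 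Here I would lean on the homogeneous-component comparison already carried out in \eqref{EqMJ}, which shows that invertibility translates into the vanishing of each fixed-degree aggregate of products.

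\medskip
\textbf{Converse direction.} Conversely, assuming $A$ is a Yagzhev algebra, i.e. \eqref{EqTerm2} holds for all $q \ge q_0$, I would construct a two-sided inverse for $F$ explicitly as the finite sum $G(x) = \sum_{|t| < q_0} t(x)$, the truncation of the formal series \eqref{EqTerm}. Because all higher-degree contributions vanish by hypothesis, $G$ is a genuine polynomial mapping (the sum is finite), and composing $G \circ F$ telescopes to the identity precisely because the surviving higher-order terms are exactly those killed by \eqref{EqTerm2}. The subtle point—and what I expect to be the main obstacle—is bookkeeping the passage between the \emph{formal} term expansion $\sum_t t(x)$ and \emph{actual} equality of polynomial mappings on $A$. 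As the excerpt warns, distinct formal terms such as $\Psi_2(x, \Psi_3(x,x,x))$ and $\Psi_2(\Psi_3(x,x,x), x)$ may represent the same element, so one must verify that the degree-$q$ grouping is well-defined independently of how terms are formally parsed, and that the weak-nilpotence identities suffice to truncate the series to a polynomial rather than merely a formal one. Handling this noncommutative term-tracking carefully, rather than any deep structural fact, is where the argument requires the most attention.
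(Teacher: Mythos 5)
Your proposal is correct and follows essentially the same route as the paper, which treats this theorem as a summary of the preceding discussion: the formal inverse $F^{-1}(x)=\sum_t t(x)$ of \eqref{EqTerm} is compared degree by degree, and invertibility is equated with the vanishing of the degree-$q$ aggregates $\sum_{|t|=q}t(a)$ for $q\ge q_0$, which is by Definition~\ref{DfWeaklyNilp} exactly the Yagzhev condition. Your explicit treatment of the converse (truncating the formal series to a genuine polynomial inverse) and of the distinction between formal terms and the mappings they induce is a careful elaboration of what the paper leaves implicit, not a different argument.
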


\subsubsection{Algebras of Engel type}

The analogous procedure can be carried out for the differential mapping.
We recall that $\Psi_\ell$ is a symmetric  multilinear mapping of
degree $\ell$.  We denote the mapping $y\to\Psi_\ell (y,x,\dots,x)$ as
$\Ad_{\ell-1}(x)$.

\begin{definition}\label{Engtyp}
An algebra  $A$ is of {\em Engel type}  $s$ if it satisfies a system of
identities
\begin{equation}  \label{EqMJ2}
\sum_{\ell m_\ell =s}  \quad \sum_ {\alpha _{1}+ \cdots + \alpha
_{q}=m_\ell }\Ad_{\alpha_1}(x)\cdots \Ad_{\alpha_q}(x)= 0.
\end{equation}
$A$  is of {\em Engel type}    if  $A$   has Engel type $s$  for
some $s$.
\end{definition}

.

\begin{theorem}\label{corresconj}
The endomorphism $F$ has Jacobian $1$ if and only if the
corresponding algebra has Engel type $s$  for some $s$.
\end{theorem}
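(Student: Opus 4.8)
The plan is to compute the Jacobian matrix of $F$ as a linear operator that depends polynomially on the base point, and then to recognize the hypothesis ``Jacobian $1$'' as the assertion that this operator has a \emph{polynomial} inverse, whose homogeneous components are exactly the left-hand sides of \eqref{EqMJ2}. First I would differentiate the normalized map $F(x)=x-\sum_{\ell=2}^m\Psi_\ell(x,\dots,x)$ of \eqref{Eq2Fk}. Since each $\Psi_\ell$ is symmetric and multilinear, the directional derivative in direction $y$ picks up a factor $\ell$ from the $\ell$ coincident arguments, so $DF_x(y)=y-\sum_{\ell=2}^m \ell\,\Psi_\ell(y,x,\dots,x)=y-\sum_{\ell=2}^m \ell\,\Ad_{\ell-1}(x)(y)$. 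Writing $N(x):=\sum_{\ell=2}^m \ell\,\Ad_{\ell-1}(x)$, a linear operator on $A$ whose matrix entries are homogeneous polynomials in $x$ of positive degree with $N(0)=0$, the Jacobian matrix becomes $J(x)=E-N(x)$, and in particular $J(0)=E$.

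Next I would establish the elementary two-sided reduction between ``$\det J\equiv 1$'' and ``$J(x)^{-1}$ is a polynomial matrix.'' In one direction, if $\det J\equiv 1$ then the adjugate formula $J^{-1}=\operatorname{adj}(J)$ exhibits a polynomial inverse directly. Conversely, if $J^{-1}$ is a polynomial matrix then $\det J$ is a unit of $\k[x_1,\dots,x_n]$, hence a nonzero constant; evaluating at the origin and using $J(0)=E$ forces that constant to equal $1$. Thus it suffices to characterize polynomiality of $J(x)^{-1}$.

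I would then invert $J(x)=E-N(x)$ by the Neumann series $J(x)^{-1}=\sum_{k\ge 0}N(x)^k$, which is well defined in the formal power series ring because $N(x)$ has no constant term. Expanding each $N(x)^k$ into products $\Ad_{\alpha_1}(x)\cdots\Ad_{\alpha_q}(x)$ and collecting all terms of a fixed total degree $s=\alpha_1+\cdots+\alpha_q$, the homogeneous degree-$s$ component $P_s(x)$ of $J(x)^{-1}$ is precisely the left-hand side of \eqref{EqMJ2}; this is the operator-valued analogue of the coefficient comparison \eqref{EqMJ}. The series is a polynomial exactly when $P_s\equiv 0$ for all large $s$, i.e.\ exactly when $A$ is of Engel type. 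Writing $N=\sum_{i=1}^{m-1}N_i$ with $N_i$ homogeneous of degree $i$, these components obey the recursion $P_0=E$ and $P_s=\sum_{i=1}^{m-1}N_i\,P_{s-i}$ for $s\ge 1$, so that $m-1$ consecutive vanishings propagate to kill the entire tail; this is what reconciles the tail condition with the statement ``Engel type $s$ for some $s$,'' and, together with the bound $(n-1)(m-1)$ on the degree of a polynomial inverse, shows both implications.

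The step I expect to be the main obstacle is the combinatorial bookkeeping in the last paragraph: matching the scalar factors $\ell$ produced by the differentiation (and the polarization normalization involving $\ell!$, which is where the hypothesis $\ch(\k)=0$ enters) against the coefficient-free rendering of \eqref{EqMJ2}, and pinning down exactly which range of $s$ is intended. I would handle this by absorbing the constants into the operators $\Ad_\alpha$ as they are used in \eqref{EqMJ2}, and by invoking the recursion $P_s=\sum_i N_i P_{s-i}$ above to show that vanishing of the degree-$s$ identity on a window of length $m-1$ is equivalent to vanishing for all larger $s$, which is in turn equivalent to polynomiality of $J^{-1}$ and hence, by the second paragraph, to $\det J\equiv 1$.
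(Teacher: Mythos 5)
Your proposal is correct and follows essentially the same route as the paper: compute the differential of $F$ via polarization to get $E-\sum_{\ell}\ell\,\Ad_{\ell-1}(x)$, identify the Jacobian-$1$ hypothesis with polynomiality of the formal (Neumann-series) inverse of this matrix, and read off the vanishing of its homogeneous components as the identities \eqref{EqMJ2}, exactly as the paper does via \eqref{EqMJ}. You simply supply details the paper leaves implicit, namely the converse direction (polynomial inverse forces $\det J$ to be the constant $1$) and the recursion $P_s=\sum_i N_iP_{s-i}$ reconciling ``vanishing for all large $s$'' with ``Engel type $s$ for some $s$.''
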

\begin{proof}
Let $x'=x+dx$. Then
\begin{equation}\begin{aligned}
\Psi_\ell (x')= & \Psi_\ell (x)+\ell \Psi_\ell (dx,x,\dots,x)
\\ \qquad \qquad + & \quad \mbox{forms\ containing \ more\ than\ one occurence\ of }
dx.\end{aligned}
\end{equation}

Hence the differential of the mapping
$$
F:\vec{x}\mapsto \vec{x}-\sum_{\ell =2}^m \Psi_\ell (\vec{x}, \dots, \vec{x})
$$
is
$$
\left(E-\sum_{\ell =2}^m \ell \Ad_{\ell-1} (x)\right)\cdot dx
$$
The identities (\ref{EqMJ}) are equivalent to the   system
of identities  (\ref{EqMJ2}) in the signature $\Omega=(\Psi_2,\dots,\Psi_m)$,
 taking $a_{\alpha_j} = \Ad_{\alpha_j}$ and $m_j = \deg \Psi_\ell -1$.


\end{proof}

Thus, we have reformulated the condition of invertibility of the
Jacobian in the language of identities.

As explained in \cite{VDEssen}, it is well known from \cite{BCW}
and \cite{Yagzhev3}   that the Jacobian Conjecture can be reduced
to the cubic homogeneous case; i.e., it is enough to consider
mappings of type
$$
x\to x+\Psi_3(x,x,x).
$$
In this case the Jacobian assumption is equivalent to the {\it
Engel condition} ~-- nilpotence of the mapping $\Ad_3(x)[y]$ (i.e.
the mapping $y\to (y,x,x)$). Invertibility, considered in
\cite{BCW}, is equivalent to
  weak nilpotence, i.e., to the identity $\sum_{|t|=k}t=0$
   holding for all sufficiently large $k$.

\medskip
\begin{rem}\label{Jac3}  In the cubic homogeneous case, $j=1$, $\alpha _j=2$ and $m_j = s$,
and we define the linear map
$$\Ad_{xx}: y\to (x,x,y)$$ and the index set $T_j\subset
\{1,\dots,q\}$ such that $i\in T_j$ if and only if $\alpha_i=j$.

Then the equality (\ref{EqMJ2}) has  the following form:
$$
\Ad_{xx}^{s/2}=0.
$$
 Thus, for a ternary symmetric algebra, Engel type
means that the operators $\Ad_{xx}$ for all $x$ are nilpotent. In
other words,  the mapping $$Ad_3(x): y\to (x,x,y)$$ is nilpotent.
Yagzhev called this the  {\em Engel condition}. (For Lie algebras
the nilpotence of the operator $\Ad_x: y\to (x,y)$ is the usual
Engel condition. Here we have a generalization for arbitrary
signature.)

Here  are Yagzhev's original  definitions, for edification. A binary algebra $A$ is
 {\em Engelian} if for any element $a\in A$ the subalgebra
$<\!R_a,L_a\!>$ of vector space endomorphisms of $A$  generated by
 the left multiplication operator $L_a$ and the right
 multiplication operator
~$R_a$ is nilpotent, and  {\em weakly Engelian} if for any element
$a\in A$ the operator $R_a+L_a$ is nilpotent. 
\end{rem}

This leads us to the {\it Generalized Jacobian Conjecture}:

\medskip
{\bf Conjecture.} {\it Let $A$ be an algebra with symmetric
$\k$-linear operators $\Psi_\ell$, for $ \ell =1,\dots,m$. In any
variety of
 Engel type, $A$ is a Yagzhev algebra. }
\medskip

By Theorem~\ref{corresconj}, this conjecture would yield  the Jacobian
Conjecture.

\subsubsection{The case of binary algebras }
When $A$ is a binary algebra, {\it Engel type} means that
the left and right multiplication mappings are both nilpotent.

 A well-known result of
S.~Wang~\cite{BCW} shows that the Jacobian Conjecture holds for
quadratic mappings
$$
\vec{x}\to \vec{x}+\Psi_2(\vec{x},\vec{x}).
$$

If  two different points $(x_1,\dots,x_n)$ and $(y_1,\dots,y_n)$
of an affine space  are mapped to the same point by
$(f_1,\dots,f_n)$, then the fact that the vertex of a parabola is
in the middle of the interval whose endpoints are at the roots
shows that all $f_i(\vec{x})$ have gradients at this midpoint
$P=(\vec{x}+\vec{y})/2$ perpendicular to the line segment
$[\vec{x},\vec{y}]$. Hence   the  Jacobian is zero at the midpoint
$P$. This fact holds in any characteristic $\ne 2$.

In Section~\ref{LiftY}  we prove the following theorem of Yagzhev,
cf.~Definition~\ref{Cap1} below:

\begin{theorem}[Yagzhev]\label{Yag2}
Every symmetric binary Engel type algebra of order $k$  satisfying
the
  system of Capelli identities  of order $n$  is
weakly nilpotent, of weak nilpotence index  bounded by some
function $F(k,n)$.
\end{theorem}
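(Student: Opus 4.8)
The plan is to establish weak nilpotence via a Burnside-type argument, leveraging the Capelli identities to force finite-dimensionality of the relevant associative envelope generated by the multiplication operators, and then to extract a uniform bound $F(k,n)$ from the structure theory. The core idea is that the Engel condition gives nilpotence of individual operators (for each fixed element $a$, both $L_a$ and $R_a$ are nilpotent), while the Capelli identities of order $n$ prevent the algebra from being ``too large'' in a way that would allow the weak nilpotence index to grow without bound. The Capelli identity of order $n$ is what plays the role of a dimension restriction: it asserts a kind of bounded rank, analogous to how satisfying a standard or Capelli identity forces a PI-algebra to behave like matrices of bounded size.

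First I would set up the associative algebra $\mathcal{R}$ generated by all left and right multiplication operators $L_a, R_a$ acting on $A$. In the binary symmetric case, by the Engel type hypothesis (and Remark~\ref{Jac3}, where $\Ad_{xx}$ is nilpotent), each generator is nilpotent of index bounded in terms of $k$. The key step is to show that $\mathcal{R}$ itself is nilpotent, not merely generated by nilpotent elements --- this is precisely a Burnside/Engel-type phenomenon. Here I would invoke the philosophy behind Zelmanov's solution of the Engel problem for Lie algebras: bounded Engel degree plus the right finiteness condition yields global nilpotence. The Capelli identity of order $n$ supplies that finiteness condition by bounding the dimension of the image of any multilinear operator, thereby controlling the combinatorial explosion of products of the $L_a, R_a$.

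Next, once $\mathcal{R}$ is shown to be nilpotent of some index $N = N(k,n)$, weak nilpotence of $A$ follows almost formally: any sufficiently long monomial $M(a,a,\dots,a)$ can be rewritten as a composition of at least $N$ multiplication operators applied to $a$, hence vanishes. The index of weak nilpotence is then bounded by a function of $N$, and tracing the dependencies back gives the desired $F(k,n)$. The translation from ``all products of $N$ operators vanish'' to ``every monomial of degree $\ge F(k,n)$ evaluates to zero'' is the routine bookkeeping step, since each application of a binary operation with repeated argument $a$ contributes one factor of the form $L_a$ or $R_a$.

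\textbf{The main obstacle} I expect is the passage from pointwise nilpotence of the operators $L_a, R_a$ to global nilpotence of the generated associative algebra $\mathcal{R}$ --- this is the genuine Burnside-type content and cannot be purely formal. The difficulty is that $L_a R_b$ mixes operators for \emph{different} elements, and the Engel condition only directly controls operators built from a single element. Overcoming this requires using the Capelli identities to bound the number of ``independent'' operators, most likely via a combinatorial linearization argument that reduces an arbitrary long product to a linear combination of products that each exhibit a repeated structure forcing nilpotence. Making the resulting bound $F(k,n)$ effective and uniform --- rather than merely proving finiteness --- will demand careful tracking of how the Capelli order $n$ interacts with the Engel index $k$ at each stage of the reduction, and this quantitative control is where the real work lies.
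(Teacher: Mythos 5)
Your proposal has a genuine gap at its central step, and it also diverges from how the paper actually proves Theorem~\ref{Yag2}. The claim you need --- that the associative algebra $\mathcal{R}$ generated by all multiplication operators $L_a$, $R_a$ is nilpotent of index $N(k,n)$ --- is strictly stronger than weak nilpotence: it would make $A$ itself nilpotent, killing all long monomials in \emph{distinct} elements, whereas weak nilpotence only asserts that every one-generated subalgebra is nilpotent of bounded class. For commutative binary algebras in which every $L_a$ is nilpotent, global nilpotence is exactly the kind of statement that fails or is open in this area (compare the paper's discussion of nilpotent versus strongly nilpotent ideals and Shestakov's example, and the known solvable-but-not-nilpotent commutative nilalgebras). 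More importantly, you never actually supply the passage from pointwise nilpotence of the $L_a$ to nilpotence of $\mathcal{R}$: ``invoking the philosophy behind Zelmanov's solution'' is not an argument, and the paper explicitly poses the existence of such a direct combinatorial proof as an open \textbf{Problem} immediately after stating the theorem (``Obtain a straightforward proof of this theorem and deduce from it the Jacobian Conjecture for quadratic mappings''). So the step you flag as ``the main obstacle'' is in fact the entire content of the theorem on your route, and it is left unproved.

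The paper's route is different and avoids this obstacle. It combines three ingredients: (i) Zubrilin's structure theory for algebras satisfying a system of Capelli identities of order $n$, which yields a Baer--Amitsur chain of at most $n$ strongly nilpotent ideals whose top quotient is representable, hence essentially finite dimensional; (ii) Lemma~\ref{LeSandvich} together with the lifting Theorem~\ref{ThSandvich}, which show that for an algebra of Engel type the Yagzhev (weak nilpotence) property lifts through sandwich ideals, so the general case reduces to the finite-dimensional one with control on the index; and (iii) the Yagzhev correspondence itself, under which a finite-dimensional symmetric binary Engel algebra corresponds to a quadratic polynomial mapping with Jacobian $1$, whose invertibility is S.~Wang's theorem (proved by the parabola-midpoint argument), and invertibility translates back into the Yagzhev identities \eqref{EqTerm2}. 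Your proposal uses the Capelli hypothesis only as a vague ``finiteness condition'' inside a hoped-for Burnside argument rather than through Zubrilin's sandwich-ideal machinery, and it makes no use of the correspondence back to polynomial mappings, which is where the actual invertibility input (Wang's theorem) enters.
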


\begin{rem} Yagzhev 
formulates his theorem in the following way:\medskip

 {\it Every binary weakly Engel
algebra of order $k$ satisfying   the system of Capelli identities
of order $n$   is weakly nilpotent, of   index bounded by some
function $F(k,n)$}.

\medskip
We obtain this  reformulation, by replacing the algebra $A$ by the
algebra~$A^+$ with multiplication given by $(a,b)=ab+ba$.
\end{rem}

The following problems may help us understand the situation:

\medskip
{\bf Problem.}\ {\it Obtain a straightforward proof of this
theorem and deduce from it the Jacobian
 Conjecture for quadratic
mappings. }
\medskip

{\bf Problem. (Generalized Jacobian Conjecture for quadratic
mappings)}\ {\it Is every symmetric binary algebra  of Engel type
 $k$, a Yagzhev algebra?}
\medskip

\subsubsection{The case of ternary algebras}

As we have observed, Yagzhev reduced the Jacobian Conjecture over a field of
characteristic zero to the question:

\medskip
{\it Is every  finite dimensional ternary Engel algebra  a Yagzhev algebra? }
\medskip

\noindent  Dru\'zkowski \cite{Dru1,Dru2} reduced this to the case
when all cubic forms $\Psi_{3i}$ are cubes of linear forms. Van
den Essen and his school reduced the  $\JC$ to the symmetric case;
see \cite{VDEssenBondt,VDEssenBondt1} for details.
  Bass,  Connell, and  Wright \cite{BCW} use other methods including inversions. Yagzhev's approach matches that of
\cite{BCW}, but using identities instead.   

\subsubsection{An example  in nonzero characteristic of an Engel algebra that is not  a Yagzhev algebra}
\label{SbScEngNoWNilpCharpos}

Now we give an example,   over an arbitrary field
$\k$ of characteristic $p>3$, of a finite  dimensional Engel
algebra that is not  a Yagzhev algebra, i.e., not weakly nilpotent.
This means that the situation for binary algebras
   differs intrinsically from that for ternary algebras,
and it would be worthwhile to understand why.

\begin{theorem}      \label{ThEngWeakNoNilp}
If\ $\ch(\k)=p>3$, then there exists a finite dimensional $\k$-algebra that is Engel  but not weakly
nilpotent.
\end{theorem}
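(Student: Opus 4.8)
The plan is to read the statement through the Yagzhev correspondence and reduce it to the failure of the Jacobian property in characteristic $p$. By Theorem~\ref{corresconj} a cubic mapping $\vec x\mapsto \vec x+\Psi_3(\vec x,\vec x,\vec x)$ has Jacobian $1$ exactly when the associated ternary algebra is of Engel type, i.e.\ (Remark~\ref{Jac3}) when $\Ad_{xx}\colon y\mapsto (x,x,y)$ is nilpotent for every $x$; and, as recalled after that theorem, invertibility of the mapping is equivalent to weak nilpotence. Thus it suffices to manufacture a finite dimensional symmetric ternary $\k$-algebra in which every $\Ad_{xx}$ is nilpotent but some element fails to be nilpotent in the sense of Definition~\ref{DfWeaklyNilp}. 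The guiding model is the additive (``Frobenius'') polynomial $x\mapsto x+x^{p}$ over $\k$: its Jacobian is $1+px^{p-1}=1$, yet it is not invertible, its image being the proper subring $\k[x+x^{p}]\subsetneq\k[x]$. The real content is to encode this one--dimensional failure of $\JC$ inside a genuine cubic, finite dimensional algebra.

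\textbf{Why $p>3$, and the shape of the construction.} I would take $\Psi_3$ to be the polarization of a suitable cubic form, which is legitimate only when $3!=6$ is invertible in $\k$; this is precisely the hypothesis $p>3$, which therefore enters at the very first step. The algebra itself I would build on a multigraded (``coloured'') space on which $\Psi_3(x,y,z)$ is nonzero only when $x,y,z$ carry three distinct colours. The effect is twofold: on colour--homogeneous elements $\Psi_3(x,x,\,\cdot\,)=0$ automatically, so $\Ad_{xx}$ vanishes there; while for a distinguished element $a=a_0+a_1+a_2$ mixing all three colours one computes $\Psi_3(a,a,a)=6\,\Psi_3(a_0,a_1,a_2)\neq 0$, the factor $6=3!$ surviving exactly because $p>3$. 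One then rigs the structure constants so that a \emph{branching} (``doubling'') monomial reproduces a fixed nonzero element, e.g.\ $\Psi_3(c,c,a)=\lambda c$ with $\lambda\neq0$, for some $c$ that is itself a monomial in $a$.

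\textbf{Steps.} First I would write down the space and the symmetric trilinear operation by its structure constants and check it is well defined (here $6^{-1}$ is used). Second, I would exhibit non--weak--nilpotence concretely: using the self--reproducing relation, produce monomials $M_k(a,\dots,a)$ whose values all equal $\lambda^{k}c\neq0$ while $\deg M_k\to\infty$ (the degrees grow like $3,7,15,\dots$ under the doubling). Since the \emph{tree} grows even though its value stays nonzero, $a$ is not nilpotent, so the algebra is not a Yagzhev algebra. Third --- the substantive step --- I would verify the Engel condition, namely that $\Ad_{xx}=\Psi_3(x,x,\,\cdot\,)$ is nilpotent for \emph{every} $x\in A$, not merely for the colour--homogeneous ones.

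\textbf{Main obstacle.} This last step is where I expect essentially all the work to lie. For a generic, non--homogeneous $x$ the operator $\Psi_3(x,x,\,\cdot\,)$ couples the self--reproducing element $c$ back into the chain, so the easy grading argument that would make $\Ad_{xx}$ raise degree is unavailable: a single grading cannot simultaneously force every $\Ad_{xx}$ to be nilpotent and keep a cyclic branching monomial alive, since the former says the chain dies out while the latter says an element is regenerated indefinitely. The reconciliation must come from genuine characteristic--$p$ cancellations --- the vanishing modulo $p$ of the relevant multinomial coefficients (with $p>3$ leaving $6$ invertible) --- which kill the dangerous cross terms of $\Psi_3(x,x,\,\cdot\,)$ and render it nilpotent uniformly in $x$, while the surviving ``Frobenius'' term is exactly the one that perpetuates the long branching monomials. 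Establishing a bound $\Ad_{xx}^{N}=0$ with $N$ independent of $x$, by bookkeeping those coefficients, is the technical heart of the argument.
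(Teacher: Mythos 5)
You have correctly identified the seed of the counterexample --- the Artin--Schreier-type map $x\mapsto x+x^{p}$, which has Jacobian $1+px^{p-1}=1$ yet is not surjective --- and you correctly read the theorem through the Yagzhev correspondence (Engel type $\leftrightarrow$ Jacobian $1$, weak nilpotence $\leftrightarrow$ invertibility). But from that point your proposal diverges into a construction you do not complete, and the step you yourself flag as ``the technical heart'' --- proving $\Ad_{xx}^{N}=0$ uniformly in $x$ for your hand-built coloured ternary algebra --- is a genuine gap, not a routine verification. You give no structure constants, no identification of which multinomial coefficients vanish mod $p$, and your own ``Main obstacle'' paragraph articulates precisely why a grading argument cannot work; the hoped-for ``characteristic-$p$ cancellations'' are never exhibited. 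As written, the Engel property of your algebra is an unproved assertion, so the proof does not close.

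The paper avoids this difficulty entirely by never constructing the ternary algebra directly. Instead it transports the one-variable counterexample through operations that manifestly preserve both properties at once: extend $x\mapsto x+x^{p}$ to $\k[x,y_1,\dots,y_n]$ by fixing the new variables; repeatedly compose with elementary (tame) automorphisms $G_1,G_2$ of the form $z\mapsto z+t_1t_2$, $y\mapsto y+t_3t_4$, $x\mapsto x-yz$ to strip off each monomial of degree $\ge 4$ at the cost of lower-degree terms (the standard degree-reduction of Yagzhev and Bass--Connell--Wright), arriving at a noninvertible map $x\mapsto x+\Psi_2(x)+\Psi_3(x)$ with Jacobian $1$; then apply the blow-up $x\mapsto x+T^{2}y+T\Psi_2(x)$, $y\mapsto y-\Psi_3(x)$, $T\mapsto T$ to reach a cubic \emph{homogeneous} map, invertibility and Jacobian~$1$ being preserved at each stage by \cite[Lemma 2]{Yagzhev3}. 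The Yagzhev correspondence then hands you a finite dimensional ternary algebra that is Engel (Jacobian $1$) but not weakly nilpotent (noninvertible) with nothing left to check. If you want to salvage your approach, the realistic path is to recognize that your ``self-reproducing'' algebra should not be invented from scratch but should be \emph{computed} as the algebra of the cubic homogenization of $x\mapsto x+x^{p}$; the Engel property then comes for free from the Jacobian, rather than from bookkeeping of structure constants.
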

\Proof
 Consider the noninvertible mapping $F: \k[x]\to \k[x]$ with
Jacobian~1:
$$
F: x\to x+x^p.
$$
We introduce  new commuting indeterminates $\{y_i\}_{i=1}^n$ and
extend this mapping to $k[x,y_1,\dots,y_n]$ by sending $y_i\mapsto
y_i$. If $n$ is big enough, then it is possible to find  tame
automorphisms $G_1$ and $G_2$ such that $G_1\circ F\circ G_2$ is a
cubic mapping $\vec{x}\to \vec{x}+\Psi_3(\vec{x})$, as follows:

Suppose we have a mapping
$$
F:x_i\to P(x)+M
$$
where $M=t_1t_2t_3t_4$ is a monomial of degree at least $4$.
Introduce two new commuting indeterminates $z, y$ and take $F(z)=z$,
$F(y)=y$.

Define the mapping $G_1$ via $G_1(z)=z+t_1t_2$, $G_1(y)=y+t_3t_4$
with $G_1$ fixing all other indeterminates; define $G_2$ via
$G_2(x)=x-yz$ with $G_2$ fixing all other
indeterminates.

The composite mapping $G_1\circ F\circ G_2$ sends $x$ to
$P(x)-yz-yt_1t_2-zt_3t_4$, $y$ to $y+t_3t_4$, $z$ to $z+t_1t_2$,
and agrees with $F$ on all other indeterminates.

Note that we have removed the  monomial $M=t_1t_2t_3t_4$ from the
image of $F$, but instead have obtained various monomials of
smaller degree ($t_1t_2$ , $t_3t_4$, $zy$, $zt_3t_4$, $yt_1t_2$).
It is easy to see that this process terminates.

Our new mapping $H(x)=x+\Psi_2(x)+\Psi_3(x)$ is noninvertible and
has Jacobian~1. Consider its blowup
$$R: x\mapsto x+T^2y+T\Psi_2(x),\  y\mapsto y-\Psi_3(x),\  T\mapsto T.$$


This mapping $R$ is invertible if and only if the initial mapping
is invertible, and has   Jacobian~1 if and only if the initial
mapping has Jacobian~1, by~\cite[Lemma 2]{Yagzhev3}. This mapping
is also cubic homogeneous. The corresponding ternary algebra is
Engel, but not weakly nilpotent. \Endproof

This example shows that a direct combinatorial approach to the
Jacobian Conjecture encounters difficulties, and in working with
related Burnside type problems (in the sense of Zelmanov
\cite{Zelmanov}, dealing with  nilpotence properties of Engel
algebras, as indicated in the introduction), one should take into
account specific properties arising in characteristic zero.

\begin{defn}
An algebra $ A$ is  {\em nilpotent} of   {\em class} $\le n$ if
$M(a_1, a_2, \dots )= 0$  for each monomial $M(x_1, x_2, \dots )$
of degree $\ge n$. An  ideal $I$ of $ A$ is  {\em strongly
nilpotent} of {\em class} $\le n$ if $M(a_1, a_2, \dots )= 0$  for
each monomial $M(x_1, x_2, \dots )$  in which indeterminates of
total degree $\ge n$ have been substituted to elements of $I$.
\end{defn}

 Although the notions of nilpotent and  strongly nilpotent coincide in the associative case,
they differ for ideals of nonassociative algebras.
For example, consider the following algebra suggested by Shestakov:
$A$ is the algebra generated by $a,b,z$ satisfying the relations $a^2 =b,$ $ bz = a$
and all other products 0. Then $I = Fa+Fb$ is nilpotent as a subalgebra,
 satisfying $I^3 = 0$ but not strongly nilpotent (as an ideal), since
$$b = ((a(bz))z)a \ne 0,$$
and one can continue indefinitely in this vein.
 Also, \cite{KuS} contains an example of a finite dimensional non-associative algebra without any ideal which is maximal witih respect to being nilpotent as a subalgebra.

 In connection with the Generalized Jacobian Conjecture
in characteristic~0, it follows  from results of Yagzhev
~\cite{Yagzhev9}, also cf.~\cite{GorniZampieri},  that there
exists a $20$-dimensional Engel algebra over $\mathbb Q$, not
weakly nilpotent, satisfying the identities
\begin{align*}
x^2y=-yx^2, \quad (((yx^2)x^2)x^2)x^2=0,\cr (xy+yx)y=2y^2x, \quad
x^2y^2=0.
\end{align*}

  However, this algebra can be seen to be
Yagzhev (see Definition \ref{DfWeaklyNilp}).

 For associative
algebras, one uses the term ``nil'' instead of ``weakly
nilpotent.'' Any nil subalgebra of a
  finite  dimensional associative algebra is nilpotent, by
Wedderburn's Theorem \cite{Wedderburn}). Jacobson generalized this
result to other settings, cf.~\cite[Theorem 15.23]{Row}, and
Shestakov~\cite{ShestakovWedderburn} generalized it to a wide class of
  Jordan algebras (not necessarily commutative).

\medskip
Yagzhev's investigation of weak nilpotence has applications to the
Koethe Conjecture,  for algebras over uncountable fields.  He
reproved:

\medskip
{\bf *} {\it In every associative algebra over an uncountable
field, the sum of every two nil right  ideals is a nil right  ideal}
\cite{yagzhevKoethe}.
\medskip

(This was proved first by Amitsur~\cite{Am}. Amitsur's result is
for affine algebras, but one can easily reduce to the affine
case.) 
%
%

\subsubsection{Algebras satisfying systems of Capelli identities}\label{SbSbScCap}

\begin{definition}\label{Cap1} The
 {\it Capelli polynomial} $C_k$ of order $k$ is
 $$C_k := \sum _{\sigma\in S_k } (-1)^{\sigma}x_{\sigma(1)}y_1 \cdots
 x_{\sigma(k)}y_k.$$
 \end{definition}

  It is obvious that an associative algebra
satisfies the  Capelli identity  $c_k$ iff, for any monomial
$M(x_{1},\ldots,x_{k},y_{1},\ldots,y_{r})$ multilinear in the
$x_i$, the following equation holds  identically in $A$:
\begin{equation}        \label{EquVarStar}
\sum_{\sigma\in S_k }(-1)^{\sigma} M(v_{\sigma(1)},\ldots,
  v_{\sigma(k)},
  y_{1},\ldots,y_{r}) = 0.
\end{equation}


However, this does not apply to nonassociative algebras, so we
need to generalize this condition.

\begin{definition} The algebra $A$ satisfies a
 {\bf system of Capelli identities} of order $k$, if \eqref{EquVarStar}
holds identically in $A$ for any monomial
$M(x_{1},\ldots,x_{k},y_{1},\ldots,y_{r})$ multilinear in the
$x_i$. \end{definition}

Any algebra of dimension $<k$ over a field satisfies a
 system of Capelli identities  of order $k$.
%
%
Algebras satisfying systems of Capelli identities behave much like
finite dimensional algebras. They were introduced and
systematically studied by  Rasmyslov \cite{RazmyslovIANCapely},
\cite{RazmyslovBook}.

 Using Rasmyslov's method,
Zubrilin~\cite{Zubrilin4}, also see \cite{RazmZubrilin,Zubrilin1},
proved that if $A$ is an arbitrary algebra satisfying the system
of Capelli identities of order~$n$, then the chain of ideals
defining the {\it solvable radical} stabilizes at the $n$-th step.
More precisely, we utilize a Baer-type radical, along the lines of
Amitsur~\cite{am2}.

Given an algebra $A$, we define $\Nilp_1: = \Nilp_1(A)  = \sum
\{$Strongly nilpotent  ideals of $A\},$ and inductively, given $\Nilp_k $,
define $\Nilp_{k+1} $ by $\Nilp_{k+1}/\Nilp_k =
\Nilp_1(A/\Nilp_k).$ For a limit ordinal $\alpha,$ define
$$\Nilp_\alpha  = \cup _{\beta < \alpha} \Nilp_\beta.$$ This must
stabilize at some ordinal $\alpha$, for which we define $\Nilp(A)
= \Nilp_\alpha.$

  \medskip

Clearly $\Nilp(A/\Nilp(A)) = 0;$ i.e., $A/\Nilp(A)$ has no nonzero strongly
nilpotent ideals.   Actually, Amitsur~\cite{am2}
defines $\zeta(A)$ as built up from ideals having trivial multiplication, and proves \cite[Theorem~1.1]{am2} that
$\zeta(A)$ is the intersection of the prime ideals of $A$.

We shall use the notion of {\it sandwich}, introduced by Kaplansky
and Kostrikin, which is a powerful tool for Burnside type problems
\cite{Zelmanov}. An ideal $I$ is called a {\it sandwich ideal} if,
for any $k$,
$$M(z_1,z_2,x_1,\dots,x_k)=0$$
for any $z_1, z_2\in I$, any set of elements $x_1,\dots,x_k$, and
any multilinear monomial $M$ of degree $k+2$. (Similarly, if the
operations of an algebra have degree $\le \ell$, then it  is
natural to use $\ell$-sandwiches, which by definition satisfy the
property that $$M(z_1,\dots,z_\ell,x_1,\dots,x_k)=0$$ for any
$z_1, \dots,z_\ell \in I$, any set of elements $x_1,\dots,x_k$,
and any multilinear monomial $M$ of degree $k+\ell$.)

The next useful lemma follows from a result from \cite{Zubrilin4}:

\begin{lemma}     \label{LeSandvich}
If  an ideal $I$ is strongly  nilpotent of class  $\ell$, then there exists
a decreasing sequence of ideals $I=I_1\supseteq\dots\supseteq
I_{l+1}=0$ such that $I_s/I_{s+1}$ is a sandwich ideal in
$A/I_{s+1}$ for all $s\le l$.
\end{lemma}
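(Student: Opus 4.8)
The plan is to build the required chain as a filtration of $I$ by the number of $I$-entries appearing in a monomial, and then to read off the sandwich property from a simple counting argument. For each $s\ge 1$, let $I_s$ denote the $C$-submodule of $A$ spanned by all values $M(u_1,\dots,u_d)$, where $M$ ranges over multilinear monomials in the operators of $A$ of arbitrary degree $d$, at least $s$ of the arguments $u_i$ lie in $I$, and the remaining arguments are arbitrary elements of $A$. By construction (allowing the trivial degree-one monomial) $I_1=I$, and the $I_s$ form a decreasing chain $I=I_1\supseteq I_2\supseteq\cdots$. First I would check that each $I_s$ is an ideal of $A$: applying any operator of the signature with one argument in $I_s$ and the others in $A$ only enlarges the monomial while preserving its (at least $s$) arguments drawn from $I$, so the result again lies in $I_s$.

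Next I would pin down where the chain terminates. Since $I$ is strongly nilpotent of class $\ell$, every monomial in which the $I$-arguments have total degree $\ge\ell$ vanishes; hence $I_\ell=0$, and padding with $I_{\ell+1}=0$ yields the asserted chain $I=I_1\supseteq\cdots\supseteq I_{\ell+1}=0$.

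The heart of the argument is verifying that $I_s/I_{s+1}$ is a sandwich ideal in $A/I_{s+1}$, i.e.\ that $M(z_1,z_2,x_1,\dots,x_k)\in I_{s+1}$ for every multilinear monomial $M$ of degree $k+2$, all $z_1,z_2\in I_s$, and all $x_1,\dots,x_k\in A$. Writing $z_1$ and $z_2$ as finite sums of monomials each having at least $s$ arguments from $I$, and expanding $M(z_1,z_2,x_1,\dots,x_k)$ by multilinearity in its first two slots, every resulting term is a single monomial whose $I$-arguments are those inherited from the expansion of $z_1$ together with those inherited from the expansion of $z_2$; since these occupy disjoint positions, there are at least $2s\ge s+1$ of them. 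Thus each term lies in $I_{s+1}$, and so does the sum. This is exactly the technique underlying Zubrilin's result \cite{Zubrilin4}, which supplies the stabilization we invoke.

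The main obstacle is not the counting itself but making it rigorous in the arbitrary-signature, genuinely nonassociative setting: one must check that ``number of arguments lying in $I$'' is well-defined and additive under composition of monomials, so that substituting an element of $I_s$ into a slot really contributes at least $s$ independent $I$-entries, and that passing from a general monomial to a multilinear one by linearization does not disturb the strong-nilpotence bound. Once these bookkeeping points are settled, the sandwich property for each layer follows from the inequality $2s\ge s+1$, and the lemma is complete.
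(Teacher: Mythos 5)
Your argument is correct, and it is worth noting that it is more self-contained than what the paper actually does: the paper offers no direct proof of Lemma~\ref{LeSandvich}, deriving it instead from Zubrilin's theorem on Baer--Amitsur sequences for algebras satisfying Capelli identities (see the remark following Definition~\ref{DfRepresAlg}). Your filtration $I_s=\operatorname{span}\{M(u_1,\dots,u_d): \text{at least } s \text{ arguments in } I\}$ does the job directly: each $I_s$ is an ideal because inserting an element of $I_s$ into a slot of any operator preserves the count of $I$-entries; $I_\ell=0$ is exactly the strong nilpotence hypothesis; and the sandwich property of $I_s/I_{s+1}$ follows from multilinearity of the operators together with the inequality $2s\ge s+1$, since substituting two elements of $I_s$ into disjoint slots of a multilinear monomial produces terms with at least $2s$ independent $I$-entries. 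The one stray remark is your appeal to Zubrilin for ``stabilization'': nothing needs to stabilize here, as the chain terminates at step $\ell$ purely by the strong nilpotence hypothesis, so that citation is unnecessary for this lemma (it is needed elsewhere in the paper, for the representability of $A/B_n$ and the maximality of the Baer--Amitsur sequence). The bookkeeping points you flag at the end are indeed routine given that all operators in the signature are multilinear by assumption, so no linearization step is actually required. What your approach buys is an elementary, signature-independent proof of exactly the statement needed; what the paper's route buys is the stronger package (representability of the final quotient and maximality of the sequence) that is used later in the proof of Theorem~\ref{eqcond}.
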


\begin{definition}     \label{DfRepresAlg}
An algebra $A$ is {\em representable}  if it can be embedded into
an algebra finite dimensional over some extension of the ground
field.
\end{definition}

\begin{rem} \ Zubrilin \cite{Zubrilin4}, properly clarified, proved the more precise
statement, that if an algebra $A$ of arbitrary signature satisfies
a system of Capelli identities $C_{n+1}$, then there exists a
sequence $B_0\subseteq B_1\subseteq\dots\subseteq B_n$  of
strongly nilpotent ideals  such that:
\begin{itemize}
  \item The natural projection of $B_i$ in $A/B_{i-1}$ is a strongly nilpotent ideal.

  \item $A/B_n$ is representable.
  \item If $I_1\subseteq I_2\subseteq\dots\subseteq I_n$ is any sequence of
ideals of $A$ such that $I_{j+1}/I_{j}$ is a sandwich ideal in
$A/I_j$, then $B_n\supseteq I_n$.
\end{itemize}

 Such a sequence of ideals will be called a {\em Baer-Amitsur
  sequence}.
 In affine space the Zariski closure of the radical is radical, and hence the factor algebra is representable.
 (Although the radical  coincides with the linear closure if the base field is infinite (see \cite{BelovUzyRowen2}),
 this assertion holds for arbitrary  signatures and base fields.) Hence in representable algebras,
 the Baer-Amitsur sequence stabilizes after finitely many steps.
  Lemma \ref{LeSandvich} follows from these
considerations.
\end{rem}

Our next main goal is to prove Theorem \ref{ThSandvich} below, but first we need another notion.
\subsubsection{The tree associated to a monomial}         \label{SbSbScTreeMon}

 Effects of nilpotence have been used   by different authors in
another language.
We  associate a {\it rooted labelled tree} to  any monomial: Any
branching vertex indicates the symbol of an operator, whose
outgoing edges are the terms in the corresponding symbol. Here is
the precise definition.

\begin{definition}     \label{DfTreeMon}
Let $M(x_1,\dots,x_n)$ be a monomial in an algebra $A$ of
arbitrary signature. One can associate the tree $T_M$ by an
inductive procedure:

\begin{itemize}
  \item If $M$ is a single variable, then $T_M$ is just the vertex $\bullet$.
  \item Let $M=g(M_1,\dots,M_k)$, where $g$ is a $k$-ary operator.
 We assume  inductively    that the trees $T_i,$ $ i=1,\dots, k,$ are already defined.
Then the tree $T_M$ is the disjoint union of the $T_i$, together
with the root $\bullet$ and arrows starting with $\bullet$ and
ending with the roots of the trees $T_i$.
\end{itemize}

\end{definition}

{\bf Remark.} Sometimes one  labels $T_M$ according to the
operator $g$ and the positions inside $g$.

\medskip

If the outgoing degree of each vertex is 0 or 2, the tree is
called {\it binary}. If the outgoing degree of each vertex is
either 0 or 3, the tree is called {\it ternary}. If each operator
is binary, $T_M$ will be binary; if each operator is ternary,
$T_M$ will be ternary.

\subsection{Lifting Yagzhev algebras}\label{LiftY}

 Recall Definitions \ref{DfWeaklyNilp}  and   \ref{Engtyp}.

\begin{theorem}         \label{ThSandvich}
Suppose $A$ is an algebra of   Engel type, and let $I$ be a sandwich ideal of
$A$. If $A/I$ is Yagzhev, then $A$ is Yagzhev.
\end{theorem}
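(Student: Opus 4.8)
The plan is to use a single Newton-type correction, exploiting the sandwich property to annihilate all higher-order error terms at once. Write $F(x)=x-\sum_{\ell=2}^m\Psi_\ell(x,\dots,x)$ and recall that the Yagzhev property asserts precisely that the formal inverse $G:=F^{-1}=\sum_t t$ truncates, i.e.\ $G_q(a):=\sum_{|t|=q}t(a)=0$ for every $a$ once $q$ is large. Fix $a\in A$. Since $A/I$ is Yagzhev of some order $q_0$, the truncation $P:=P(a)=\sum_{|t|<q_0}t(a)$ is the genuine inverse modulo $I$, so $r:=a-F(P)$ lies in $I$ and is a fixed polynomial of bounded degree in $a$. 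The formal solution $u:=G(a)$ of the functional equation $u=a+\sum_\ell\Psi_\ell(u,\dots,u)$ then splits as $u=P+w$, where $w=\sum_{q\ge q_0}G_q(a)$ is a formal series all of whose homogeneous components lie in $I$.

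First I would substitute $u=P+w$ into the functional equation and expand each $\Psi_\ell(P+w,\dots,P+w)$ by multilinearity. Every resulting monomial that places $w$ in two or more slots inserts two elements of $I$ into a single multilinear operator, and hence vanishes because $I$ is a sandwich ideal. What survives is exactly the constant-in-$w$ part $\sum_\ell\Psi_\ell(P,\dots,P)=\Psi(P)$ together with the linear-in-$w$ part $\sum_\ell\ell\,\Ad_{\ell-1}(P)[w]$. Using $r=a-P+\Psi(P)$, the functional equation collapses to the \emph{linear} equation
\begin{equation}
\Big(E-\sum_{\ell=2}^m\ell\,\Ad_{\ell-1}(P)\Big)(w)=r,
\end{equation}
that is, $dF|_P(w)=r$, where $dF|_P$ is the differential computed in Theorem~\ref{corresconj}.

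Next I would invert $dF|_P$. Its leading term is $E$, so it is formally invertible, and by hypothesis $A$ is of Engel type, which is exactly the assertion that the homogeneous components of $(dF|_P)^{-1}$ vanish beyond degree $s$; thus $H:=(dF|_P)^{-1}$ is a genuine polynomial operator of bounded degree in $P$. Applying $H$ gives $w=H(r)$. Since $r$ has bounded degree in $a$ and $H$ is a finite-degree operator in $P$ (itself a bounded-degree polynomial in $a$), the element $H(r)$ has bounded degree in $a$. Comparing homogeneous components in the identity $w=H(r)$ forces $G_q(a)=0$ for all $q$ exceeding an explicit bound depending only on $s$, $q_0$ and $m$; this bound is uniform in $a$, so $A$ is Yagzhev.

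The main obstacle, I expect, is the bookkeeping that makes the formal manipulations legitimate: one must treat $w$ as a formal power series graded by degree in $a$, verify that the sandwich identity may be applied component-by-component to kill every term nonlinear in $w$ (so that the equation for $w$ is genuinely linear), and confirm that evaluating the Engel-type identity at the element $x=P(a)$ yields a finite operator $H$ whose composition with the fixed element $r$ stays within a controlled degree range. Once these grading and substitution points are checked, the collapse to $dF|_P(w)=r$ and the finiteness of $H(r)$ deliver the result.
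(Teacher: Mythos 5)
Your argument is correct and is in substance the paper's own proof: the paper's two propositions assert exactly that (i) the sandwich property annihilates every contribution to the inverse series that is at least quadratic in the part lying in $I$, and (ii) the surviving part linearizes to a chain of $\Ad$-operators (a ``fat branch''), which vanishes because the formal inverse of the differential truncates by Engel type. You have merely repackaged the paper's tree-combinatorial bookkeeping as the Newton-type linearization $dF|_P(w)=r$ of the functional equation $u=a+\sum_\ell\Psi_\ell(u,\dots,u)$, which organizes the same two ingredients more transparently.
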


{\bf Proof.} The proof follows easily from the following two
propositions.

\medskip

  Let $k$ be the class of weak  nilpotence of $A/I$. We call a
branch of the tree {\it fat} if it has more than $k$ entries.

\begin{statement}
a) The sum of all monomials of any degree $s>k$ belongs to $I$.

b) Let $x_1,\dots, x_n$ be  fixed indeterminates, and $M$ be an
arbitrary monomial, with $s_1,\dots,s_\ell>k$. Then

\begin{equation}      \label{EqSnvch}
\sum_{|t_1|=s_1,\dots, |t_\ell|=s_\ell} M(x_1,\dots,x_n,t_1,\dots, t_\ell)\equiv 0.
\end{equation}

c)  The sum of all monomials of degree $s$, containing at least $\ell$
non-intersecting fat branches, is zero.
\end{statement}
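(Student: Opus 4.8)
The plan is to prove the three parts in sequence, letting (a) feed (b) and (b) feed (c). Only (a) uses that $A/I$ is Yagzhev, only the step from (a) to (b) uses that $I$ is a sandwich ideal, and the rest is the combinatorics of the trees $T_M$ of Definition~\ref{DfTreeMon}. For (a): writing $k$ for the weak nilpotence class of $A/I$, Definition~\ref{DfWeaklyNilp} says that the image in $A/I$ of the sum $\sum_{|t|=s}t$ of all degree-$s$ monomials vanishes as soon as $s>k$ (polarizing the one-variable identity gives the same for the multilinear sum). Read back in $A$, this says exactly that the sum of all degree-$s$ monomials lies in $\Ker(A\to A/I)=I$; this is immediate and is the sole appeal to the Yagzhev hypothesis.

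For (b): because every operator $\Psi_\ell$ is multilinear, the monomial $M$ is multilinear in all of its arguments, so I would pull the summations inside,
\begin{equation*}
\sum_{|t_1|=s_1,\dots,|t_\ell|=s_\ell}M(x_1,\dots,x_n,t_1,\dots,t_\ell)=M\bigl(x_1,\dots,x_n,\textstyle\sum_{|t_1|=s_1}t_1,\dots,\sum_{|t_\ell|=s_\ell}t_\ell\bigr).
\end{equation*}
Each inner sum has degree $s_i>k$ and hence lies in $I$ by (a), so $M$ is evaluated with at least $\ell\ge 2$ of its arguments in $I$; as $M$ is a multilinear monomial and $I$ is a sandwich ideal, its value is $0$, which is \eqref{EqSnvch}. (For $\ell=1$ the same computation returns only membership in $I$, i.e.\ (a) again; the genuine content of (b) is the collapse to $0$ once two disjoint fat branches are present.)

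For (c) --- the combinatorial core, and the step I expect to be the real obstacle --- the natural approach is to reorganize the sum over degree-$s$ monomials carrying at least $\ell$ pairwise non-intersecting fat branches by a ``skeleton-plus-branches'' decomposition: excise $\ell$ disjoint fat sub-branches, leaving an outer monomial $\hat M$ with $\ell$ empty slots and branch-degrees $s_1,\dots,s_\ell>k$; for fixed $\hat M$ and fixed degrees the sum over all fillings of the slots is precisely an instance of \eqref{EqSnvch}, hence $0$, so summing over all skeletons and degree vectors would give $0$. The difficulty is that one monomial generally admits many systems of $\ell$ non-intersecting fat branches, so this structured sum reproduces each monomial with a combinatorial multiplicity rather than with coefficient $1$. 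I would try to pin the multiplicities down by a canonical choice: call a fat branch \emph{minimal} if none of its proper sub-branches is fat; two minimal fat branches, being subtrees, are nested or disjoint, and minimality rules out nesting, so the minimal fat branches of any monomial are automatically pairwise non-intersecting, and a monomial has $\ge\ell$ non-intersecting fat branches iff it has $\ge\ell$ minimal ones. Organizing the designation around the minimal fat branches, with an inclusion--exclusion over how many of them one marks, should fix the coefficients. The delicate point --- where I expect most of the work to lie --- is to check that replacing a marked branch by the full sum over its degree-$s_i$ fillings (as (b) requires) is compatible with this minimality bookkeeping, since the fillings need not themselves be minimal or even fat.
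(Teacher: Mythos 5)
Your treatment of (a) and (b) is exactly the paper's, which disposes of them in one clause each: (a) is ``just a reformulation of the weak nilpotence of $A/I$'' (your polarization remark makes precise why the multilinear sum, and not only the diagonal evaluation, lands in $I$), and (b) ``follows from a) and the sandwich property'' --- i.e.\ precisely your computation of pulling the sums $\sum_{|t_i|=s_i}t_i$ inside $M$ by multilinearity, placing each inner sum in $I$ by (a), and then invoking the definition of a sandwich ideal. Your parenthetical caveat is consistent with the paper: the sandwich condition as defined requires two arguments from $I$, so the vanishing in \eqref{EqSnvch} genuinely needs $\ell\ge 2$, which is all that is used downstream (part (c) is applied to monomials with at least two disjoint fat branches).

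For (c), the paper's entire proof is the sentence ``to get c) from b), it is enough to consider the highest non-intersecting fat branches.'' That is the same canonical-antichain device you propose: since any two fat subtrees are nested or disjoint, the extremal (in your version, minimal) fat branches form a canonical pairwise-disjoint family, and a monomial has $\ge\ell$ disjoint fat branches iff this family has $\ge\ell$ members. The difficulty you isolate --- that \eqref{EqSnvch} requires summing a fixed skeleton over \emph{all} fillings of its $\ell$ slots of the prescribed degrees, whereas the fillings arising from a given class of monomials are constrained (the designated slots need not remain the extremal fat branches of the filled-in monomial), so the structured sum reproduces monomials with varying multiplicities --- is real, and the paper simply does not address it; it is suppressed in that one sentence. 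So you have not missed any idea that the paper supplies; if anything your write-up makes explicit the bookkeeping the paper leaves to the reader, and closing it (e.g.\ by filtering on the full set of minimal fat branches and expressing the restricted filling-sums recursively through the unrestricted ones) would go beyond what the source provides.
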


{\bf Proof.} a) is just a reformulation of the weak  nilpotence
 of~$A/I$; b) follows from a) and the sandwich property
of an ideal $I$. To get c) from~b),  it is enough to consider the
highest non-intersecting fat branches.

\begin{statement}[Yagzhev]
The linearization of the sum of all terms with a fixed fat branch
of length $n$ is the complete linearization of the function
$$
\sum_{\sigma\in S_n}\prod(\Ad_{k_{\sigma(i)}})(z)(t).
$$
\end{statement}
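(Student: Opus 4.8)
The plan is to unwind both sides into sums over decorated trees and match them term by term. First I would describe a term in the expansion $F^{-1}(x)=\sum_t t(x)$ of \eqref{EqTerm} around its fat branch: reading the spine $v_0\to v_1\to\cdots\to v_n$ of length $n$ from the top, at each vertex $v_{i-1}$ one meets an operator $\Psi_{\ell_i}$, follows the edge into one of its arguments to reach $v_i$, and leaves the remaining $\ell_i-1$ arguments as side subtrees hanging off the spine. Setting $k_i:=\ell_i-1$, the operator read along the spine at step $i$ is exactly $\Ad_{k_i}$ evaluated on those side inputs $z_i$, so the spine contributes the composite $\Ad_{k_1}(z_1)\cdots\Ad_{k_n}(z_n)$ applied to the input $t$ carried by the tail below $v_n$.

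Next I would sum over all terms carrying a fat branch of length $n$ with a fixed multiset $\{k_1,\dots,k_n\}$ of spine operators. The power-series inversion of $\vec x\mapsto\vec x-\sum_\ell\Psi_\ell(\vec x,\dots,\vec x)$ produces, for this fixed multiset, exactly one term for each ordering of the operators along the spine, so the summation factors as a sum over orderings $\sigma\in S_n$ times a sum over the fillings of the side arguments. For a fixed ordering this gives $\Ad_{k_{\sigma(1)}}(z)\cdots\Ad_{k_{\sigma(n)}}(z)$, and summing over $\sigma$ yields $\sum_{\sigma\in S_n}\prod_i\Ad_{k_{\sigma(i)}}(z)$ applied to $t$. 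This is the single-spine version of the equivalence between the inverse-of-$F$ coefficients \eqref{EqMJ} and the Engel-type identity \eqref{EqMJ2} recorded in the proof of Theorem~\ref{corresconj} under the substitution $a_{\alpha_j}=\Ad_{\alpha_j}$; the fatness of the branch is what puts us in the regime where the spine alone carries the full composition.

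Finally I would pass to linearizations. Since each $\Psi_\ell$ is symmetric and multilinear, the side slots of each spine operator are in general occupied by distinct subtrees, so replacing the collapsed variable in each diagonal factor $\Ad_{k_i}(z)$ by distinct indeterminates and summing over all assignments converts $\sum_{\sigma\in S_n}\prod_i\Ad_{k_{\sigma(i)}}(z)(t)$ into its complete linearization, which is precisely the right-hand side; the linearization of the left-hand sum of tree-terms agrees with it. The step I expect to be the main obstacle is the bookkeeping of multiplicities in this last passage: one must verify that the factorials introduced when polarizing a spine operator whose side slots carry coinciding inputs cancel exactly against the multiplicities with which the corresponding tree-terms occur, so that the sum over spine orderings matches the symmetrization over $S_n$ with coefficient precisely one and no spurious lower-degree terms survive. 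Once this coefficient matching is checked, the two completely linearized expressions coincide and the statement follows.
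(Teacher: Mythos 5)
The paper offers no proof of this proposition at all: it is attributed to Yagzhev and stated bare inside the proof of Theorem~\ref{ThSandvich}, whose own proof is only the remark that it ``follows easily from the following two propositions.'' So there is no argument of record to compare yours against. Your spine decomposition --- reading the fat branch as a composite of operators $\Ad_{k_i}$ acting on the side subtrees, with the tail below the last spine vertex supplying the input $t$ --- is certainly the intended reading of the statement, and the parallel you draw with the passage from \eqref{EqMJ} to \eqref{EqMJ2} in the proof of Theorem~\ref{corresconj} is apt.

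That said, what you have written is a plan rather than a proof, and the one step you defer is the only step carrying content. Two assertions are left unjustified. First, the claim that the inversion series produces ``exactly one term for each ordering of the operators along the spine'' for a fixed multiset $\{k_1,\dots,k_n\}$ is not obviously true: since the paper counts $\Psi_2(x,\Psi_3(x,x,x))$ and $\Psi_2(\Psi_3(x,x,x),x)$ as distinct terms, the spine may pass through any of the $\ell_i=k_i+1$ argument slots of $\Psi_{\ell_i}$, so each ordering occurs with multiplicity $\prod_i(k_i+1)$ before the symmetry of the $\Psi_{\ell_i}$ is invoked --- and this is exactly the factor $\ell$ that appears in the differential $\bigl(E-\sum_\ell \ell\,\Ad_{\ell-1}(x)\bigr)$ in the proof of Theorem~\ref{corresconj}. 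Whether these multiplicities are absorbed into the ``complete linearization'' on the right-hand side or survive as extraneous coefficients is precisely the bookkeeping you postpone. Second, the polarization convention $(F_{\ell,i})=\Psi_\ell(\vec{x},\dots,\vec{x})\cdot\ell!$ introduces factorials whose cancellation against the orbit counts arising when side slots carry coinciding inputs you acknowledge but do not verify. Until that coefficient matching is carried out, the equality of the two linearizations is not established; the proposal identifies the correct combinatorial skeleton but does not yet prove the statement.
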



Theorem 1.2, Lemma \ref{LeSandvich}, and  Zubrilin's  result
give us the following major result:

\begin{theorem}\label{eqcond}
In characteristic zero, the Jacobian conjecture is equivalent to
the following statement:

Any algebra of Engel type satisfying some system of Capelli
identities is a Yagzhev algebra.
\end{theorem}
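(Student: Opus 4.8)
The plan is to prove the two implications separately, treating the equivalence as the meeting point of the two translations already established: Theorem~\ref{corresconj}, which identifies the Jacobian-$1$ condition with Engel type, and Theorem~1.2, which identifies invertibility of $F$ with the Yagzhev property. The only genuinely new ingredient is the passage, in the reverse direction, from the representable situation governed by the $\JC$ to an arbitrary algebra satisfying a system of Capelli identities, and this is exactly what Lemma~\ref{LeSandvich}, Theorem~\ref{ThSandvich}, and Zubrilin's Baer--Amitsur sequence are designed to handle.

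For the implication ``statement $\Rightarrow \JC$'' I would argue directly through the Yagzhev correspondence. Given a polynomial endomorphism $F$ of $\A^n_\k$ with Jacobian $1$, normalize it as in~\eqref{Eq1Fk} and form the associated finite-dimensional algebra $A$ of dimension $n$. By Theorem~\ref{corresconj} the hypothesis $\det = 1$ makes $A$ an algebra of Engel type, and since $\dim A = n$ it automatically satisfies the system of Capelli identities of order $n+1$. The assumed statement then forces $A$ to be Yagzhev, and Theorem~1.2 converts this back into the invertibility of $F$; hence $F$ is an automorphism and $\JC_n$ holds. This direction is essentially bookkeeping once the dictionary is in place.

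The substantive implication is ``$\JC \Rightarrow$ statement.'' Let $A$ be of Engel type satisfying a system of Capelli identities $C_{n+1}$. I would first invoke Zubrilin's result, as recorded in the Remark following Lemma~\ref{LeSandvich}, to produce a Baer--Amitsur sequence $B_0 \subseteq B_1 \subseteq \dots \subseteq B_n$ of strongly nilpotent ideals for which $A/B_n$ is representable and each successive layer is strongly nilpotent in the appropriate quotient. The strategy is then twofold. First, handle the representable quotient $A/B_n$: in characteristic zero a representable algebra generates the same variety as some finite-dimensional algebra $\bar A$, and since Engel type is among the identities defining $\Var(A/B_n)$, this $\bar A$ is again of Engel type; being finite-dimensional it satisfies a Capelli system, so by Theorem~\ref{corresconj} it corresponds to a map of Jacobian $1$, which the $\JC$ renders invertible, whence Theorem~1.2 makes $\bar A$ Yagzhev of some finite order. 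Weak nilpotence of that order is itself a system of identities~\eqref{EqTerm2}, hence holds throughout $\Var(\bar A) = \Var(A/B_n)$, so $A/B_n$ is Yagzhev. Second, lift the Yagzhev property down the sequence: each strongly nilpotent layer is refined, via Lemma~\ref{LeSandvich}, into a finite chain of sandwich ideals, and since Engel type passes to all the quotients $A/B_i$, repeated application of Theorem~\ref{ThSandvich} climbs from $A/B_n$ through each sandwich extension up to $A$ itself, yielding that $A$ is Yagzhev.

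I expect the main obstacle to lie in the representable step and in the uniformity of the bounds during the lift. The delicate point in the representable step is the transfer of Engel type to a finite-dimensional representative: one must know that a representable algebra in characteristic zero is PI-equivalent to a finite-dimensional one, so that Engel type (as a defining identity of the variety) and the applicability of the $\JC$ hold simultaneously, and that the resulting weak-nilpotence index survives the passage back to $\Var(A/B_n)$. The second delicate point is that Theorem~\ref{ThSandvich} is applied a number of times that grows with the nilpotence classes of the layers, so one must ensure the weak-nilpotence index of $A$ stays finite, indeed bounded in terms of $n$ and the Engel parameter; this is where the finiteness built into Lemma~\ref{LeSandvich} and the stabilization of the Baer--Amitsur sequence after finitely many steps are essential.
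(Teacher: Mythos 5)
Your proposal is correct and follows exactly the route the paper intends: the paper's entire ``proof'' is the one-line remark that Theorem~1.2, Lemma~\ref{LeSandvich}, Theorem~\ref{ThSandvich}, and Zubrilin's Baer--Amitsur sequence (with $A/B_n$ representable) yield the result, and your two implications assemble precisely these ingredients in the intended way. You also correctly isolate the one point the paper glosses over --- that a representable Engel-type algebra is PI-equivalent to a finite-dimensional one (via Zariski closure, cf.\ the remark after Lemma~\ref{LeSandvich}) so that the $\JC$ can be applied and the resulting weak-nilpotence identities pulled back.
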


This theorem generalizes the following result of Yagzhev:

\begin{theorem}
The Jacobian conjecture is equivalent to the following statement:

Any ternary Engel algebra in characteristic 0 satisfying a system
of Capelli identities is a Yagzhev algebra.
\end{theorem}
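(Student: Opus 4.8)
The plan is to prove the two implications separately, in each case reducing to machinery already established rather than re-deriving it. The forward implication ($\JC$ implies the ternary statement) will be obtained as a specialization of Theorem~\ref{eqcond}, while the reverse implication (the ternary statement implies $\JC$) will use the classical reduction of the Jacobian Conjecture to the cubic homogeneous case together with the Yagzhev correspondence and the invertibility criterion, Theorem~1.2.

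For the direction $\JC\Rightarrow$ (ternary statement), I would first observe, via Remark~\ref{Jac3}, that a symmetric ternary algebra satisfies the Engel condition exactly when it is of Engel type in the sense of Definition~\ref{Engtyp} for the signature consisting of a single ternary operation; in that case Engel type amounts to the nilpotence of every operator $\Ad_{xx}$. Consequently the class of ternary Engel algebras satisfying a system of Capelli identities is a subclass of the algebras of Engel type satisfying a system of Capelli identities. Theorem~\ref{eqcond} asserts that, in characteristic zero, $\JC$ is equivalent to the statement that \emph{every} algebra of Engel type satisfying a system of Capelli identities is Yagzhev. Hence, assuming $\JC$, every algebra in the larger class is Yagzhev, and a fortiori every ternary Engel algebra satisfying a system of Capelli identities is Yagzhev. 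This settles the forward direction with no extra work.

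For the direction (ternary statement) $\Rightarrow\JC$, I would invoke the reduction, known from \cite{BCW} and \cite{Yagzhev3} and recalled after Theorem~\ref{corresconj}, that it suffices to treat cubic homogeneous endomorphisms
$$
F:\ \vec{x}\mapsto \vec{x}+\Psi_3(\vec{x},\vec{x},\vec{x})
$$
of $V=\k^n$ having Jacobian $1$. To such an $F$ I would attach, through the Yagzhev correspondence, the symmetric ternary algebra $A=(V,\Psi_3)$. By Theorem~\ref{corresconj} together with Remark~\ref{Jac3}, the hypothesis that the Jacobian equals $1$ translates precisely into the Engel condition on $A$ (nilpotence of each $\Ad_{xx}$), so $A$ is a ternary Engel algebra. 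Since $\dim_\k A=n<\infty$, the algebra $A$ automatically satisfies a system of Capelli identities of order $n+1$. Applying the ternary statement to $A$ then yields that $A$ is a Yagzhev algebra, i.e.\ weakly nilpotent. Finally, Theorem~1.2 (invertibility $\Leftrightarrow$ weak nilpotence) converts the weak nilpotence of $A$ back into the invertibility of $F$, so $F$ is an automorphism; as this holds for every such cubic homogeneous mapping, $\JC_n$ holds for all $n$, that is, $\JC$ holds.

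The genuinely substantive content is not re-proved here but inherited: the passage from an arbitrary, possibly infinite-dimensional, Capelli algebra to the representable, essentially finite-dimensional situation---carried out by lifting the Yagzhev property through sandwich ideals (Theorem~\ref{ThSandvich}) together with Zubrilin's stabilization of the Baer--Amitsur sequence---is exactly what is packaged inside Theorem~\ref{eqcond}, so the ternary specialization inherits it for free. I expect the only delicate points to be bookkeeping: verifying that ``Engel type for a single ternary operation'' coincides on the nose with the ternary Engel condition (guaranteed by Remark~\ref{Jac3}), and making explicit that the reduction to the cubic homogeneous case preserves both the Jacobian-$1$ hypothesis and invertibility---the crux that legitimizes the entire ternary reduction, supplied by \cite{BCW,Yagzhev3}.
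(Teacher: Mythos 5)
Your proposal is correct and follows essentially the same route the paper intends: the paper states this theorem as the ternary special case that Theorem~\ref{eqcond} generalizes, and the two directions are exactly as you give them --- the forward implication by restricting Theorem~\ref{eqcond} to the ternary subclass (via Remark~\ref{Jac3}), and the reverse implication via the cubic homogeneous reduction of \cite{BCW,Yagzhev3}, the Yagzhev correspondence, Theorem~\ref{corresconj}, the automatic Capelli identities in finite dimension, and Theorem~1.2. The paper gives no separate written proof, but your reconstruction matches its machinery point for point.
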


The Yagzhev correspondence and the results of this section
  (in particular, Theorem~\ref{eqcond}) yield the proof of
  Theorem~\ref{Yag2}.

\subsubsection{Sparse identities}

Generalizing  Capelli identities, we say that an algebra satisfies
a system of {\it sparse identities} when there exist $k$ and
coefficients $\alpha_{\sigma}$ such that for any monomial
$M(x_{1},\ldots,x_{k},y_{1},\ldots,y_{r})$ multilinear in $x_i$
the following equation holds:
\begin{equation}        \label{EquVarStarGM}
\sum_{\sigma }\alpha_{\sigma} M(c_{1}v_{\sigma(1)}d_{1},\ldots,
  c_{k}v_{\sigma(k)}d_{k},
  y_{1},\ldots,y_{r}) = 0.
\end{equation}

Note that one need only check  \eqref{EquVarStarGM} for monomials.
The system of Capelli identities is a special case of a system of
sparse identities (when $\alpha_{\sigma}=(-1)^\sigma$). This
concept ties in with the following ``few long branches'' lemma
\cite{Zubrilin2}, concerning the structure of trees of monomials
for algebras with sparse identities:

\begin{lemma}[Few long branches]\label{spars}
Suppose an algebra $A$ satisfies a system of sparse identities of
order $m$. Then any monomial is linearly representable by monomials such that
the corresponding tree has not more than  $m-1$ disjoint
branches of length  $\ge m$.

\end{lemma}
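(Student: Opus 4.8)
The plan is to induct on a complexity measure of the monomial's tree that is driven down by repeated application of the sparse identity. First I would fix notation: for a monomial $M$ with tree $T_M$ (Definition~\ref{DfTreeMon}), call a descending path a \emph{branch}, its \emph{length} the number of variable-entries along it, and let $N(M)$ be the maximal number of pairwise vertex-disjoint branches of length $\ge m$. If $N(M)\le m-1$ there is nothing to prove, so the goal is to show that whenever $N(M)\ge m$ the monomial $M$ is a linear combination of monomials strictly smaller in a well-founded order refining $N$; the lemma then follows by induction on that order.

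For the reduction step, suppose $T_M$ contains $m$ pairwise disjoint branches $B_1,\dots,B_m$ of length $\ge m$. On each $B_i$ I would single out a vertex $p_i$ and let $v_i$ be the subterm carried below $p_i$ along $B_i$, so that $M$ is displayed as $M = M_0(\dots, c_i v_i d_i, \dots)$, where the $v_i$ occupy $m$ slots in which $M_0$ is multilinear and $c_i,d_i$ record the operator context on either side of $v_i$ inside its branch. Since each $B_i$ has length $\ge m$, there is room to place the $i$-th marker at depth $i$, i.e.\ at \emph{staggered} depths. This is exactly the shape to which the system of sparse identities of order $m$ applies, with the $v_i$ as the alternating arguments and the $c_i,d_i$ as the sandwiching factors.

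Applying the sparse identity yields $\sum_\sigma \alpha_\sigma\, M_0(\dots, c_i v_{\sigma(i)} d_i, \dots) = 0$. With the markers staggered, the identity permutation contributes $M$ itself with nonzero coefficient, so $M$ is expressed as a linear combination of the permuted monomials $M_\sigma$ with $\sigma\ne\mathrm{id}$. The role of the staggering is that any nontrivial $\sigma$ transplants a residual tail $v_{\sigma(i)}$ onto a branch whose marker sits at a different depth; this splices two of the formerly disjoint long branches together at a branching vertex, so in each $M_\sigma$ one of the previously independent long branches is absorbed and $N(M_\sigma)<N(M)$ (or, when $N$ is unchanged, a secondary lexicographic measure on the multiset of branch-lengths strictly drops). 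Feeding the $M_\sigma$ back into the induction hypothesis completes the argument.

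The main obstacle is precisely this strict-decrease claim: one must verify combinatorially that the staggered substitution-and-permutation genuinely lowers the chosen measure for every $\sigma\ne\mathrm{id}$, and hence that the rewriting terminates. A secondary technical point is to make the decomposition $M = M_0(\dots, c_i v_i d_i, \dots)$ rigorous in the nonassociative tree setting—checking that the $v_i$ truly occupy multilinear slots of $M_0$, so that the sparse identity is directly applicable—since, unlike the associative case, the factors $c_i,d_i$ are not literal left/right multipliers but encode the entire operator context along the branch.
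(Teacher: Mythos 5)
First, a point of reference: the paper does not prove this lemma at all --- it is quoted from Zubrilin \cite{Zubrilin2} --- so there is no in-paper argument to compare with, and I am judging your proposal on its own merits. Your skeleton (locate $m$ disjoint branches of length $\ge m$, cut each at a marked position, apply the sparse identity to permute the pieces, and induct on a well-founded measure) is indeed the standard shape of this kind of argument, and the reduction of $M$ to the terms $M_\sigma$ with $\sigma\neq\mathrm{id}$ is legitimate once one normalizes so that some permutation $\tau$ with $\alpha_\tau\neq 0$ plays the role of the identity (the definition of a sparse system does not promise $\alpha_{\mathrm{id}}\neq 0$). Your closing remark about making the decomposition $M=M_0(\dots,c_iv_id_i,\dots)$ rigorous in the nonassociative setting is also well taken: one must check the $m$ marked positions are pairwise independent (none an ancestor of another), which disjointness of the branches does give you.

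The gap is exactly where you suspected, but it is worse than an unverified step: the mechanism you describe is not what the identity does. Substituting $c_iv_{\sigma(i)}d_i$ for $c_iv_id_i$ merely regrafts the subtree of $v_{\sigma(i)}$ at the $i$-th marked position; it creates no new branching vertices, so nothing is ``spliced'' or ``absorbed,'' and the $m$ branches remain pairwise disjoint paths whose lengths are only redistributed. Writing $\ell_i=d_i+h_i$, where $d_i$ is the depth of the marker on $B_i$ and $h_i$ the length of the tail $v_i$, the branch through position $i$ in $M_\sigma$ has length $d_i+h_{\sigma(i)}$, and $\sum_i\bigl(d_i+h_{\sigma(i)}\bigr)=\sum_i\ell_i$ is invariant under $\sigma$. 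Hence every nontrivial $\sigma$ necessarily lengthens some of the chosen branches while shortening others, and $N(M_\sigma)<N(M)$ can simply fail: with $m=2$, two branches of length $10$ and markers at depths $1$ and $2$, the transposition produces lengths $9$ and $11$, both still $\ge m$, while the maximum length and the decreasingly sorted length sequence actually \emph{increase}. So neither $N$ nor any of the obvious secondary measures is shown to drop, and the entire content of the lemma --- designing a well-founded measure (necessarily sensitive to more than the lengths of the selected branches, e.g.\ to the depths at which the long branches begin; compare the paper's own device of passing to the \emph{highest} non-intersecting fat branches in the proof of Theorem~\ref{ThSandvich}) and verifying its strict decrease for every $\sigma\neq\mathrm{id}$ --- is still missing. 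Nor does termination come for free from the finiteness of trees of a given degree: without a genuinely decreasing measure the rewriting could cycle.
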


 Lemma~\ref{spars} may   be useful in studying nilpotence of Engel algebras.
%
%

\subsection{Inversion formulas and problems of  Burnside type}
We have seen that the $\JC$  relates to problems of ``Specht type''
(concerning whether one set of polynomial identities implies
another), as well as problems of Burnside type.

Burnside type problems become more complicated in  nonzero
characteristic; cf.~Zelmanov's review article \cite{Zelmanov}.

  Bass, Connell, and Wright~\cite{BCW} attacked the $\JC$ by means
  of inversion formulas.
 D.~Wright \cite{Wright}
wrote an inversion formula for the symmetric case and related it
to a combinatorial structure called the {\it Grossman--Larson
Algebra}. Namely, write $F=X-H$, and define $J(H)$ to be the
Jacobian matrix of $H$.  Wright proved the $\JC$ for the case
where $H$ is homogeneous and $J(H)^3=0$, and also for the case
where $H$ is cubic and $J(H)^4=0$; these correspond in Yagzhev's
terminology to the cases of Engel type $3$ and $4$, respectively.
Also, the so-called {\it chain vanishing theorem} in~\cite{Wright}
follows from  Engel type. Similar results were obtained earlier by
Singer~\cite{Singer} using tree formulas for formal inverses. The
inversion formula, introduced in \cite{BCW}, was investigated by
D.~Wright and his school.  Many authors  use the language of
so-called  {\it tree expansion} (see \cite{Wright,Singer} for
details). In view of Theorem \ref{ThEngWeakNoNilp},  the tree
expansion technique  should be highly nontrivial.

 The Jacobian Conjecture can be formulated
as a question of quantum field theory (see \cite{Abdelmalek}), in which tree expansions are seen to correspond to Feynmann diagrams.

In the papers \cite{Singer} and \cite{Wright} (see also
\cite{Wright1}),  trees with one label correspond  to elements of
the algebra $A$  built by Yagzhev, and $2$-labelled trees
correspond to the elements of the operator algebra $D(A)$ (the
algebra generated by operators $x\to M(x,\vec{y})$, where $M$ is
some monomial). These authors deduce weak nilpotence from the
Engel conditions of degree 3 and 4.   The inversion formula for
automorphisms of tensor product of Weyl algebras and the ring of
polynomials was studied intensively in the papers
\cite{Bavula,Bavula1new}. Using  techniques from
\cite{BelovKontsevich1}, this yields a slightly different proof of
the equivalence between the  $\JC$ and DC, by an argument similar
to one given in \cite{YagzhevLast}. Yagzhev's approach makes the
situation much clearer, and the known approaches to the Jacobian
Conjecture using inversion formulas can be explained from this
viewpoint.

\begin{rem}\label{Jac4} The most
recent inversion formula (and probably the most algebraically
explicit one) was obtained by V.~Bavula \cite{Bavula3}. The
coefficient $q_0$ can be made explicit in \eqref{EqTerm2}, by
means of  the Gabber Inequality, which says that if
$$f: K^n\to K_n; \quad x_i\to f_i(\vec x)$$ is a polynomial
automorphism, with
 $\deg(f)=\max_i\deg(f_i)$, then $\deg(f^{-1})\le \deg(f)^{n-1}$)
\end{rem}

In fact, we are working with {\it operads}, cf.~the classical
book~\cite{MSS}. A review of operad theory and its relation with
physics and $\PI$-theory in particular Burnside type problems,
will appear in D.~Piontkovsky \cite{Piontkovski}; see also
\cite{Piontkovski1,PiontkovskiKhoroshkin}.  Operad theory provides
a supply of   natural identities and varieties, but they also correspond  to geometric facts. For example, the Jacobi
identity corresponds to the fact that the altitudes of a triangle
are concurrent. M.~Dehn's observations that the Desargue property
of a projective plane corresponds to associativity of its
coordinate ring, and Pappus' property to its commutativity, can be
considered as a first step in operad theory. Operads are
important in mathematical physics, and formulas for the famous
Kontsevich quantization theorem resemble formulas for the inverse
mapping. The operators considered here are operads.


\section{The Jacobian Conjecture for varieties, and deformations}\label{SbScJCArbVar0}

In this section we consider   analogs of the  $\JC$ for other
varieties of algebras, partially with the aim on throwing light on
the classical  $\JC$ (for the commutative associative polynomial
algebra).

\subsection{Generalization of the Jacobian Conjecture to arbitrary
varieties}                  \label{SbScJCArbVar}

J.~Birman~\cite{Bir} already proved the $\JC$ for free groups in
1973.
 The $\JC$ for free associative algebras (in two
generators) was established   in~1982 by~W.~Dicks and J.~Levin
\cite{DicksLevin,Dicks}, utilizing Fox derivatives, which we
describe later on. Their result was reproved by
Yagzhev~\cite{Yagzhev1}, whose  ideas are sketched in this
section. Also see  Schofield~\cite{Schofield}, who proved  the
full version. Yagzhev then applied these ideas to other varieties
of algebras \cite{Yagzhev8,Yagzhev9} including nonassociative
commutative algebras and   anti-commutative algebras;
U.U.~Umirbaev \cite{Umirbaev} generalized these to
``Schreier varieties,'' defined by the property that every subalgebra of a free algebra is free. 
The $\JC$ for free Lie algebras was proved by
Reutenauer~\cite{Reu}, Shpilrain~\cite{Shp}, and Umirbaev
\cite{Umi}.

The Jacobian Conjecture for varieties generated by finite
dimensional algebras, is closely related to the Jacobian
Conjecture in the usual commutative associative  case, which is
the most important.

Let $\goth M$ be a variety of algebras of some signature~$\Omega$
over a given field~$\k$ of characteristic zero, and $\k_{\goth
M}\!<\vec{x}\!>$~the relatively free algebra in $\goth M$ with
generators $\vec{x}=\{x_i :i\in I\}$. We assume that $|\Omega|,
|I|<\infty$, $I=1,\dots, n$.

Take a set $\vec{y}=\{y_i\}_{i=1}^n$ of new indeterminates. For
any $f(\vec{x})\in k_{\goth M}\!<\!\vec{x}\!>$ one can define an
element $\hat{f}(\vec{x},\vec{y})\in k_{\goth
M}\!<\!\vec{x},\vec{y}\!>$ via the equation

\begin{equation}         \label{EqDerFn}
f(x_1+y_1,\dots,x_n+y_n)=f(\vec{y})+\hat{f}(\vec{x},\vec{y})+
R(\vec{x},\vec{y})
\end{equation}
\noindent where $\hat{f}(\vec{x},\vec{y})$ has degree $1$ with respect
to $\vec x$, and $R(\vec{x},\vec{y})$ is the sum of monomials
of degree  $\ge 2$  with respect to $\vec x$; $\hat{f}$ is a
generalization of the differential.

Let $\alpha\in\End(k_{\goth M}\!<\!\vec{x}\!>)$, i.e.,
\begin{equation}      \label{EqEnd}
\alpha: x_i\mapsto f_i(\vec{x});\ i=1,\dots,n.
\end{equation}

\begin{definition}\label{Jac1}
Define the Jacobi endomorphism $\hat{\alpha}\in\End(\k_{\goth
M}\!<\!\vec{x},\vec{y}\!>)$ via the equality
\begin{equation}       \label{EqDif}
\hat{\alpha}:
\left\{
\begin{array}{cc}
x_i\to\hat{f}_i(\vec{x}),\cr y_i\to y_i.
\end{array}
\right.
\end{equation}
\end{definition}

The Jacobi mapping $f \mapsto \hat{ f}$ satisfies the chain rule,
in the sense that it  preserves composition.

\begin{rem}\label{chain}
 It is not
difficult to check (and is well known) that if
$\alpha\in\Aut(\k_{\goth M}\!<\!\vec{x}\!>)$ then
$\hat{\alpha}\in\Aut(\k_{\goth M}\!<\!\vec{x},\vec{y}\!>)$.
\end{rem}

The inverse implication is called the {\it Jacobian Conjecture for
the variety~$\goth M$}. Here is an important special case.

\begin{definition}
Let $A\in{\goth M}$ be a finite dimensional algebra, with base
$\{\vec{e}_i\}_{i=1}^N$. Consider a set of commutative indeterminates
$\vec{\nu}=\{\nu_{si}|s=1,\dots,n; i=1,\dots,N\}$. The elements
$$z_j=\sum_{i=1}^N\nu_{ji} \vec{e}_i; \quad  j=1,\dots,n$$
are called  {\em generic elements of $A$}.

\end{definition}

Usually in the matrix algebra ${\mathbb M}_m(\k),$ the set of
matrix units $\{e_{ij}\}_{i,j=1}^m$ is taken as the base. In this
case $e_{ij}e_{kl}=\delta_{jk}e_{il}$ and
$z_l=\sum_{ij}\lambda_{ij}^le_{ij}$, $l=1,\dots,n$.

\begin{definition}
A   {\em generic matrix} is a matrix whose entries are distinct
   commutative indeterminates, and the so-called   {\em algebra of
generic matrices of order $m$} is generated by associative generic
$m\times m$ matrices.
\end{definition}

The algebra of generic matrices is prime, and every prime,
  relatively free, finitely generated associative
$PI$-algebra is isomorphic to an algebra of generic matrices. If
we include taking traces as an operator in the signature, then we
get the {\it algebra of generic matrices with trace}. That algebra
is a Noetherian module over its center.

Define the $\k$-linear mappings
$$
\Omega_i: \k_{\goth M}\!<\!\vec{x}\!>\to \k[\nu];\quad i=1,\dots,
n
$$
via the relation
$$
f(\sum_{i=1}^N\nu_{1i}e_i,\dots,\sum_{i=1}^N\nu_{ni}e_i)=
\sum_{i=1}^N (f\Omega_i)e_i.
$$
It is easy to see that the polynomials $f\Omega_i$ are uniquely determined
by $f$.

One can define the mapping
$$
\varphi_A: \End(k_{\goth M}\!<\!\vec{x}\!>)\to\End(k[\vec{\nu}])
$$
as follows: If
$$
\alpha\in\End(k_{\goth M}\!<\!\vec{x}\!>): x_s\to f_s(\vec{x})\quad
s=1,\dots,n
$$
then $\varphi_A(\alpha)\in\End(k[\vec{\nu}])$ can be defined via
the relation
$$
\varphi_A(\alpha): \nu_{si}\to P_{si}(\vec{\nu}); \quad s=1,\dots,n; \quad
i=1,\dots,n,
$$
where $P_{si}(\vec{\nu})=f_s\Omega_i$.

The following proposition is well known.

\begin{statement}[\cite{Yagzhev9}]
Let $A\in{\goth M}$ be a finite dimensional algebra, and
$\vec{x}=\{x_1,\dots,x_n\}$ be a finite set of commutative
indeterminates. Then the mapping $\varphi_A$ is a semigroup
homomorphism, sending $1$ to $1$, and automorphisms to
automorphisms. Also the mapping $\varphi_A$ commutes with the
operation $\widehat{} $ of taking the Jacobi endomorphism, in the
sense that $\widehat{\varphi_A(\alpha)}=\varphi_A(\hat{\alpha})$.
If $\varphi$ is invertible, then $\widehat{\varphi}$ is also
invertible.
%
\end{statement}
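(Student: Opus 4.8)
The plan is to recognize $\varphi_A$ as the ``functor of points'' attached to $A$ and to reduce every assertion to the single fact that evaluation at the generic elements is an algebra homomorphism. I would regard $\k[\vec\nu]$ as the coordinate ring of the affine space $A^n$ of $n$-tuples of elements of $A$, a point being $(z_1,\dots,z_n)$ with $z_s=\sum_i\nu_{si}\vec e_i$. Then $\Omega_i$ is precisely extraction of the $i$-th coordinate along $\{\vec e_i\}$ after the substitution $x_s\mapsto z_s$, and an endomorphism $\alpha\colon x_s\mapsto f_s$ of $\k_{\goth M}\!<\!\vec x\!>$ induces the polynomial self-map of $A^n$ sending $(z_1,\dots,z_n)$ to $(f_1(\vec z),\dots,f_n(\vec z))$; this is exactly $\varphi_A(\alpha)$, since the $i$-th coordinate of $f_s(\vec z)$ is $f_s\Omega_i=P_{si}$.

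First I would record that $A\otimes_\k\k[\vec\nu]$ again lies in $\goth M$ (scalar extension preserves the identities defining $\goth M$, the operators being multilinear), so by the universal property of the relatively free algebra the assignment $x_s\mapsto z_s$ extends to a homomorphism $\mathrm{ev}\colon \k_{\goth M}\!<\!\vec x\!>\to A\otimes_\k\k[\vec\nu]$, $f\mapsto\sum_i(f\Omega_i)\vec e_i$. The unital and multiplicative properties of $\varphi_A$ are then formal consequences of the functoriality of $\mathrm{ev}$: the identity endomorphism gives $f_s=x_s$, hence $x_s\Omega_i=\nu_{si}$ and $\varphi_A(1)=1$; and compatibility of $\mathrm{ev}$ with substitution yields that $\varphi_A$ preserves composition (the chain rule). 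Automorphisms then go to automorphisms with no further work, since $\varphi_A(\alpha^{-1})$ is a two-sided inverse of $\varphi_A(\alpha)$.

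The heart of the proof is the compatibility $\widehat{\varphi_A(\alpha)}=\varphi_A(\hat\alpha)$. I would introduce second generic coordinates $w_s=\sum_i\mu_{si}\vec e_i$ for the $y$-variables, so that both endomorphisms live in $\End(\k[\vec\nu,\vec\mu])$, and then substitute $x_s\mapsto z_s$, $y_s\mapsto w_s$ into the defining expansion $f_s(\vec x+\vec y)=f_s(\vec y)+\hat f_s(\vec x,\vec y)+R_s$ of \eqref{EqDerFn}. The key observation is that $z_s+w_s=\sum_i(\nu_{si}+\mu_{si})\vec e_i$, so the left-hand side evaluates to $\sum_i P_{si}(\vec\nu+\vec\mu)\vec e_i$, while the three summands on the right evaluate to $\sum_i P_{si}(\vec\mu)\vec e_i$, to $\sum_i(\hat f_s\Omega_i)\vec e_i$, and to a term of degree $\ge 2$ in $\vec\nu$. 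Because the generic substitution $x_s\mapsto z_s$ is $\k$-linear in the coordinates $\vec\nu$, the middle term $\hat f_s\Omega_i$ is homogeneous of degree exactly one in $\vec\nu$; matching the unique degree-one-in-$\vec\nu$ part against the hat expansion of $P_{si}$ and using linear independence of the $\vec e_i$ gives $\hat f_s\Omega_i=\hat P_{si}(\vec\nu,\vec\mu)$. Since both maps fix every $\mu_{si}$, this is exactly the asserted identity.

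The main obstacle is conceptual rather than computational: the two ``hat'' operations in the last identity are a priori different, one being the differential of the abstract endomorphism $\alpha$ computed in the free algebra via the $2n$-variable expansion, the other the differential of the concrete polynomial endomorphism $\varphi_A(\alpha)$ of $\k[\vec\nu]$; the bridge between them is precisely the $\k$-linearity of the generic-element map in its coordinates, which is what makes the degree-one parts correspond. Finally, for the last assertion I would note that $\k[\vec\nu]$ is itself a relatively free (commutative associative) algebra, so Remark~\ref{chain} applies verbatim to $\varphi=\varphi_A(\alpha)$: if $\varphi$ is an automorphism then so is $\widehat\varphi$.
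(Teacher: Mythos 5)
The paper does not actually prove this proposition --- it is stated as ``well known'' with a citation to Yagzhev's paper, so there is no internal argument to compare against. Your proof is correct and supplies exactly the standard argument one would expect behind the citation: realizing $\varphi_A$ as the functor-of-points map induced by the evaluation homomorphism $\mathrm{ev}\colon \k_{\goth M}\!<\!\vec x\!>\to A\otimes_\k\k[\vec\nu]$, $x_s\mapsto z_s$, which exists by the universal property of the relatively free algebra, makes the semigroup and unit properties formal, and your key computation --- substituting generic elements into the defining expansion \eqref{EqDerFn} and isolating the part homogeneous of degree one in $\vec\nu$, which is legitimate precisely because $z_s$ is $\k$-linear in the coordinates $\nu_{si}$ and the operators of $\Omega$ are multilinear --- correctly yields $\hat f_s\Omega_i=\hat P_{si}$ and hence $\widehat{\varphi_A(\alpha)}=\varphi_A(\hat\alpha)$. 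Two small points deserve explicit mention if you write this up: the claim that $A\otimes_\k\k[\vec\nu]$ again lies in $\goth M$ uses that $\goth M$ is determined by multilinear identities, which is automatic here only because the paper works in characteristic zero; and the final assertion about $\widehat\varphi$ is, as you say, just Remark~\ref{chain} applied to the polynomial ring viewed as the relatively free commutative associative algebra. Neither is a gap, only a place to be explicit about hypotheses.
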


This proposition is important, since as noted after
Remark~\ref{chain}, the opposite direction is  the  $\JC$.

\subsection{Deformations and the Jacobian Conjecture for free associative
algebras}

\begin{definition} A {\it $T$-ideal} is a   completely characteristic ideal, i.e., stable under
any endomorphism.\end{definition}

\begin{statement}
Suppose $A$ is a relatively free algebra in the variety $\goth M$,
$I$~ is a $T$-ideal in $A$, and ${\goth M}'=\Var(A/I)$. Any
polynomial mapping $F: A\to A$
 induces  a natural mapping $F':A/I\to A/I$, as well as a
mapping $\widehat{F'}$ in~ ${\goth M}'$. If $F$ is invertible,
then $F'$ is invertible; if $\hat{F}$ is invertible, then
$\widehat{F'}$ is also invertible.
\end{statement}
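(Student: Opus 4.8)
The plan is to reduce everything to two facts: that reduction modulo a $T$-ideal is a functorial (monoid-homomorphic) operation on endomorphisms of a relatively free algebra, and that the formation of the Jacobi endomorphism commutes with this reduction. Throughout write $\pi\colon A=\k_{\goth M}\!<\!\vec{x}\!>\to A/I=\k_{\goth M'}\!<\!\vec{x}\!>$ for the canonical projection, noting that $A/I$ is the relatively free algebra of $\goth M'=\Var(A/I)$ on the images of $\vec x$. A polynomial mapping $F\colon x_i\mapsto f_i(\vec x)$ is just an endomorphism of the relatively free algebra $A$; since $I$ is completely characteristic, $F(I)\subseteq I$, so $F$ descends to the induced endomorphism $F'\colon x_i\mapsto \pi(f_i)$ of $A/I$, i.e. $\pi\circ F=F'\circ\pi$. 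The assignment $F\mapsto F'$ sends $\Id_A$ to $\Id_{A/I}$ and respects composition: if $F,G\in\End(A)$ both preserve $I$, then $\pi\circ FG=F'\circ\pi\circ G=F'\circ G'\circ\pi$, whence $(FG)'=F'G'$.

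The first assertion is then immediate. If $F$ is invertible, with inverse $G\in\Aut(A)$, then $G$ too preserves the $T$-ideal $I$ and descends to $G'$; applying the previous paragraph to $FG=GF=\Id_A$ gives $F'G'=G'F'=\Id_{A/I}$, so $F'$ is invertible with inverse $G'$.

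For the second assertion I pass to the $2n$-generator algebras. Let $\tilde\pi\colon \k_{\goth M}\!<\!\vec{x},\vec{y}\!>\to\k_{\goth M'}\!<\!\vec{x},\vec{y}\!>$ be the canonical projection, whose kernel $\tilde I$ is again a $T$-ideal; as before $\hat F$ descends to an endomorphism $(\hat F)'$ of $\k_{\goth M'}\!<\!\vec{x},\vec{y}\!>$. The crux is the identity $\widehat{F'}=(\hat F)'$. To see it, apply the homomorphism $\tilde\pi$ to the defining equation
\[
f_i(x_1+y_1,\dots,x_n+y_n)=f_i(\vec y)+\hat f_i(\vec x,\vec y)+R_i(\vec x,\vec y),
\]
using that substitution $x_j\mapsto x_j+y_j$ commutes with $\tilde\pi$ (both are determined on generators), to obtain
\[
\pi(f_i)(\vec x+\vec y)=\pi(f_i)(\vec y)+\tilde\pi(\hat f_i)+\tilde\pi(R_i).
\]
Comparing this with the equation defining $\widehat{F'}$ (the analogous expansion of $\pi(f_i)$ inside $\goth M'$), the two agree provided $\tilde\pi$ preserves the grading by degree in $\vec x$: for then $\tilde\pi(\hat f_i)$ is exactly the part of $\vec x$-degree $1$ and $\tilde\pi(R_i)$ the part of degree $\ge 2$, so by uniqueness of this decomposition $\tilde\pi(\hat f_i)=\widehat{\pi(f_i)}$, i.e. $(\hat F)'=\widehat{F'}$. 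Granting this, invertibility of $\hat F$ yields invertibility of $(\hat F)'=\widehat{F'}$ exactly as in the second paragraph.

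The step I expect to be the main obstacle is justifying that $\tilde\pi$ is graded for the $\vec x$-degree, equivalently that the $T$-ideal $\tilde I$ is homogeneous in $\vec x$. This is where characteristic zero is essential: applying the scaling endomorphism $x_i\mapsto tx_i,\ y_i\mapsto y_i$ to an arbitrary element of $\tilde I$ and invoking a Vandermonde (linearization) argument over the infinite field $\k$ shows that each $\vec x$-homogeneous component again lies in $\tilde I$. Without homogeneity the reduction $\tilde\pi$ could mix the linear-in-$\vec x$ term $\hat f_i$ with higher-degree terms of $R_i$, and the identification $\widehat{F'}=(\hat F)'$ would break down; everything else in the argument is purely formal.
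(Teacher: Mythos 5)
Your argument is correct, and in fact the paper states this Proposition without any proof, treating it as routine. Your write-up supplies exactly the details one would want: the reduction modulo a $T$-ideal is a monoid homomorphism on endomorphisms (giving the first claim), and the identity $\widehat{F'}=(\hat F)'$ reduces to the $\vec x$-homogeneity of the $T$-ideal $\tilde I$, which your scaling-plus-Vandermonde argument over the infinite (characteristic-zero) base field correctly establishes, so the second claim follows.
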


For example, let $F$ be a polynomial endomorphism of the free
associative algebra $k\!<\vec{x}\!>$, and $I_n$ be the $T$-ideal
of the algebra of generic matrices of order $n$. Then
$F(I_n)\subseteq I_n$ for all $n$. Hence $F$ induces an
endomorphism $F_{I_n}$ of $k\!<\vec{x}\!>/I_n$. In particular,
this is a semigroup homomorphism. Thus, if $F$ is invertible, then
$F_{I_n}$ is invertible, but not vice versa.

The Jacobian mapping $\widehat{F_{I_n}}$ of the reduced
endomorphism $F_{I_n}$ is   the reduction of the Jacobian mapping
of $F$.

\subsubsection{The Jacobian Conjecture and the packing property}   \label{SbSbScPacking}

\indent   This subsection is based on the {\it packing property}
and deformations. Let us illustrate the main idea. It is well
known that the composite of ALL quadratic extensions of $\mathbb
Q$ is infinite dimensional over $\mathbb Q$.  Hence all such
extensions cannot be embedded (``packed'')  into a single
commutative finite dimensional $\mathbb Q$-algebra. However, all
of them can be packed into ${\mathbb M}_2(\mathbb Q)$. We
formalize the notion of packing in \S\ref{pack1}.  Moreover, for
ANY elements NOT in $\mathbb Q$ there is a parametric family of
embeddings (because it embeds non-centrally and thus can be
deformed via conjugation by a parametric set of matrices).
Uniqueness thus means belonging to the center. Similarly,
adjoining noncommutative coefficients allows one to decompose
polynomials, as to be elaborated below.

This idea allows us to solve equations via a finite dimensional
extension, and to find  a parametric sets of solutions if some
solution does not belong  to the original algebra. That situation
contradicts   local invertibility.

Let $F$ be an endomorphism of the free associative
algebra having invertible Jacobian.  We suppose that $F(0)=0$ and
$$F(x_i)=x_i+\sum\mbox{terms\ of}\ \mbox{order}\ge 2 .$$
We intend to show how the invertibility of the Jacobian implies
invertibility of the mapping $F$.

Let $ {Y}_1,\dots, {Y}_k$ be generic $m \times m$ matrices. Consider the
system of equations
$$
\left\{F_i( {X}_1,\dots, {X}_n)=Y_i; \quad i=1,\dots,k
\right\}.
$$
This system has a solution over some finite extension of order $m$
of the field generated by the center of the algebra of generic
matrices {\it with trace}.

Consider the set of block diagonal $mn\times mn$ matrices:

\begin{equation}        \label{EqMatrType}
A=\left(
\begin{array}{lcccl}
A_1&0& &\dots&0\cr 0&A_2&0&\dots&0\cr \vdots& &\ddots&
&\vdots \cr 0& &\dots& &A_n
\end{array}
\right),
\end{equation}
where the  $A_j$ are   $m\times m$ matrices.

Next, we  consider the system of equations
\begin{equation}\label{EqMatnk}
    \left\{F_i(X_1,\dots,X_n)=Y_i; \quad i=1,\dots,k \right\},
\end{equation}
where the $mn\times mn$ matrices $Y_i$ have the form
(\ref{EqMatrType})
with the $A_j$ generic matrices.

Any $m$-dimensional  extension of the base field $\k$ is embedded
into~$\mathbb{M}_m(\k)$. But $\mathbb{M}_{mn}(\k)\simeq
\mathbb{M}_{m}(\k)\otimes\mathbb{M}_{n}(\k)$. It follows that for
appropriate $m$, the system~(\ref{EqMatnk}) has a unique solution
in the matrix ring with traces. (Each    is given by a matrix
power series where the summands are matrices whose entries  are
homogeneous forms, seen by  rewriting   $Y_i  =  X_i +
∑\text{terms of order    2}$ as       $X_i  =  Y_i +
∑\text{terms of order  2}$, and iterating.)  The solution is
unique  since their entries are distinct commuting indeterminates.

If $F$ is invertible, then this solution must have   block
diagonal form. However, if $F$ is not invertible, this solution
need not have  block diagonal form.  Now we translate
invertibility of the Jacobian to the language of {\bf parametric
families} or {\bf deformations}.

Consider the matrices
$$
E_\lambda^\ell=
\left(
\begin{array}{lcccr}
E&0&& \dots&0\cr 0&\ddots&&\dots&0\cr 0&\dots&\lambda\cdot E&
&0\cr \vdots& &\dots&\ddots&\vdots\cr 0& &\dots&  &E
\end{array}
\right) $$ where $E$ denotes the identity matrix. (The index
$\ell$ designates the position of the  block $\lambda\cdot E$.)
Taking  $X_j$ not to be  a block diagonal matrix, then for some
$\ell$ we obtain a non-constant parametric family $E_\lambda^\ell
X_j(E_\lambda^\ell )^{-1}$ dependent on $\lambda$.

On the other hand, if $Y_i$ has form (\ref{EqMatrType})
then
$E_\lambda^\ell Y_i(E_\lambda^\ell)^{-1}=Y_i$ for all $\lambda\ne 0$; $ \ell=1,\dots,k$.

Hence, if $F_{I_n}$ is not an automorphism, then we have a {\bf
continuous parametric set of solutions}. But if the Jacobian
mapping is invertible, it is locally 1:1, a contradiction. This
argument yields the following result:

\begin{theorem}
For $F\in\End(k\!<\vec{x}\!>)$, if the Jacobian of $F$ is
invertible, then  the reduction $F_{I_n}$ of $F$, modulo the
$T$-ideal of the algebra of generic matrices, is invertible.
\end{theorem}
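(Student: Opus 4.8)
The plan is to exploit the ``packing'' of the $n$ separate block representations into a single matrix algebra together with a conjugation symmetry, thereby converting the hypothesis on the Jacobian into a local injectivity statement that forbids parametric families of solutions. Fix $n$ and work inside the algebra of generic $mn\times mn$ matrices with trace, written in block-diagonal form with $n$ diagonal blocks of size $m$. I would take the right-hand sides $Y_i$ to be block-diagonal matrices as in \eqref{EqMatrType} whose diagonal blocks are generic $m\times m$ matrices, and consider the system \eqref{EqMatnk}. Since $F(x_i)=x_i+(\text{terms of order}\ge 2)$, I would solve this system by rewriting it as $X_i=Y_i+(\text{terms of order}\ge 2)$ and iterating, obtaining each $X_i$ as a matrix power series whose summands have entries that are homogeneous forms; the Cayley--Hamilton and trace identities keep the result inside the matrix ring with trace. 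The crucial point, already noted just before the statement, is that for suitable $m$ this solution is \emph{unique}, because the entries of the $Y_i$ are distinct commuting indeterminates.

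Next I would pass to the commutative Jacobian picture. Applying the homomorphism $\varphi_A$ of the Proposition of~\cite{Yagzhev9} (with $A$ the relevant block matrix algebra), the endomorphism $F$ induces a polynomial endomorphism $\varphi_A(F)$ of the commutative polynomial ring $\k[\vec{\nu}]$ in the matrix entries. Since $\varphi_A$ commutes with the Jacobi operation, one has $\widehat{\varphi_A(F)}=\varphi_A(\hat F)$; and as $F$ has invertible Jacobian, so that $\hat F\in\Aut$ and $\varphi_A$ carries automorphisms to automorphisms, the commutative map $\varphi_A(F)$ again has an everywhere-invertible Jacobian. A polynomial self-map of affine space with nonvanishing Jacobian determinant is locally injective (by the inverse function theorem over $\C$, to which we reduce by the Lefschetz principle), so for a fixed value its fiber is discrete; in particular it admits no non-constant continuous one-parameter family of solutions.

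Finally I would deploy the deformation. Each $Y_i$ is block-diagonal, hence commutes with every matrix $E_\lambda^\ell$, giving $E_\lambda^\ell Y_i(E_\lambda^\ell)^{-1}=Y_i$ for all $\lambda\ne 0$. Thus if $(X_1,\dots,X_n)$ solves the system, so does the conjugate $(E_\lambda^\ell X_1(E_\lambda^\ell)^{-1},\dots,E_\lambda^\ell X_n(E_\lambda^\ell)^{-1})$. Were some solution $X_j$ not block-diagonal, then for a suitable $\ell$ the assignment $\lambda\mapsto E_\lambda^\ell X_j(E_\lambda^\ell)^{-1}$ would be a non-constant continuous family of solutions, contradicting the local injectivity just established; equivalently, by uniqueness each $X_j$ must commute with all the $E_\lambda^\ell$, and hence be block-diagonal. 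Block-diagonality of the solution for suitable $m$ says precisely that each generic $Y_i$ lies in the image of $F_{I_n}$, while the uniqueness supplies injectivity, so $F_{I_n}$ is an automorphism. The main obstacle I anticipate is the bookkeeping in the first step: ensuring that the iteratively constructed power-series solution genuinely represents an element of the matrix ring \emph{with trace} rather than a merely formal series, and that its uniqueness is literally the local-injectivity statement needed to run the deformation argument, so that the packing construction and the Jacobian hypothesis lock together correctly.
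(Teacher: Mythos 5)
Your proposal follows essentially the same route as the paper's own argument: packing into block-diagonal $mn\times mn$ generic matrices, the unique iteratively-constructed power-series solution of $F_i(X_1,\dots,X_n)=Y_i$, and conjugation by $E_\lambda^\ell$ producing a forbidden non-constant parametric family of solutions whenever some $X_j$ fails to be block-diagonal, contradicting the local injectivity supplied by the invertible Jacobian. Your only additions are the explicit justification of the ``locally 1:1'' step via $\varphi_A$ and the inverse function theorem, which the paper leaves implicit, and an honest flagging of the same gap the paper itself defers to Yagzhev's original article (that the formal solution genuinely lies in the matrix ring with trace).
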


See \cite{Yagzhev8} for further details of the proof.
Because any  relatively free affine algebra of characteristic 0
  satisfies the set of identities of some
matrix algebra, it is the quotient of the algebra of generic
matrices by some $T$-ideal $J$. But $J$ maps into itself after any
endomorphism of the algebra. We conclude:

\begin{corollary}
If $F\in\End(k\!<\!\vec{x}\!>)$ and the Jacobian of $F$ is
invertible, then the reduction $F_{J}$ of $F$ modulo any proper
$T$-ideal $J$  is invertible.
\end{corollary}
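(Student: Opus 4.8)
The plan is to reduce the statement to the preceding Theorem by producing a single generic-matrix $T$-ideal contained in $J$ and then propagating invertibility along the induced quotient map. Since $J$ is a proper $T$-ideal and $\vec{x}$ is finite, the quotient $A:=k\!<\!\vec{x}\!>/J$ is a relatively free \emph{affine} algebra over a field of characteristic zero, hence a PI-algebra. I would invoke the representability of such algebras (the fact, recalled just above, that any relatively free affine algebra in characteristic zero satisfies the identities of some matrix algebra): this yields an $n$ for which $A$ satisfies all identities of $\mathbb{M}_n(\k)$, i.e. $\Id(\mathbb{M}_n)=I_n\subseteq J$. Consequently the canonical surjection factors as $k\!<\!\vec{x}\!>\twoheadrightarrow R_n\twoheadrightarrow A$, where $R_n:=k\!<\!\vec{x}\!>/I_n$ is the algebra of generic $n\times n$ matrices and $\bar J:=J/I_n$ is a $T$-ideal of $R_n$ with $A\cong R_n/\bar J$.

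Now the preceding Theorem applies: because the Jacobian of $F$ is invertible, the reduction $F_{I_n}$ is an automorphism of $R_n$; write $G_n$ for its inverse automorphism. The decisive point is that $\bar J$, being a $T$-ideal of $R_n$, is stable under \emph{every} endomorphism of $R_n$, and in particular under both $F_{I_n}$ and $G_n$ (the latter qualifying because an automorphism is in particular an endomorphism). Hence each of $F_{I_n}$ and $G_n$ descends to an endomorphism of $A=R_n/\bar J$; the two descents are mutually inverse because the relations $F_{I_n}G_n=G_nF_{I_n}=\mathrm{id}$ pass to the quotient, and the descent of $F_{I_n}$ is exactly $F_J$. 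Therefore $F_J$ is an automorphism of $A$, which is the assertion.

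The one substantive ingredient is the representability step that supplies the inclusion $I_n\subseteq J$; everything afterwards is the formal functoriality of reduction modulo a $T$-ideal, together with the trivial remark that a $T$-ideal, being stable under all endomorphisms, is in particular stable under the inverse of any automorphism. I expect the main obstacle to lie in justifying this first step cleanly---in particular, in passing from ``$A$ embeds into a finite-dimensional algebra over some extension of $\k$'' to ``$A$ satisfies the identities of $\mathbb{M}_n(\k)$ over $\k$ itself.'' In characteristic zero this causes no real difficulty, since every identity is a consequence of multilinear ones and the multilinear identities of $\mathbb{M}_n$ are unchanged under field extension, so one may take the $T$-ideal $I_n$ over the base field $\k$.
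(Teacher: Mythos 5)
Your argument is correct and is essentially the paper's own proof: the authors likewise invoke the fact that a relatively free affine algebra in characteristic zero satisfies the identities of some matrix algebra, so that $k\!<\!\vec{x}\!>/J$ is a quotient of the generic matrix algebra by a $T$-ideal, and then descend the invertibility of $F_{I_n}$ (from the preceding theorem) using stability of $T$-ideals under endomorphisms. Your added care about why the inverse also preserves the $T$-ideal, and about base-field independence of the identities of $\mathbb{M}_n$ via multilinearization, only makes explicit what the paper leaves implicit.
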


In order to get invertibility of   $\vec F$ itself, Yagzhev used
the additional ideas:
\begin{itemize}
\item The block diagonal technique works equally well on  skew
fields. \item The above  algebraic constructions can be carried
out on Ore extensions, in particular for the {\it Weyl algebras}
$W_n=\k[x_1,\dots,x_n;\partial_1,\dots,\partial_n]$.

\item By a result of L.~Makar-Limanov, the free associative
algebra   can be embedded into the ring of fractions of the Weyl
algebra. This provides a nice presentation for mapping the free
algebra.
\end{itemize}


\begin{definition}
Let $A$ be an algebra, $B\subset A$ a subalgebra, and $\alpha:
A\to A$  a polynomial mapping of $A$ (and hence $\alpha(B)\subset
B$, see Definition \ref{DefPolMap}). $B$ is a {\it test algebra
for $\alpha$}, if $\alpha(A\backslash B)\ne A\backslash B$.
\end{definition}

The next theorem shows the universality of the notion of a test
algebra. An endomorphism is called {\it rationally invertible} if
it is invertible over {Cohn's}
 skew field of fractions~\cite{Cohn} of
$\k\!<\!\vec{x}\!>$.

\begin{theorem}[Yagzhev]
For any $\alpha\in \End(k\!<\vec{x}\!>)$, one of the two
statements holds:
\begin{itemize}
\item $\alpha$ is rationally invertible, and its reduction to any
finite dimensional factor  also is rationally invertible. \item
There exists a test algebra for some finite dimensional reduction
of $\alpha$.
\end{itemize}
\end{theorem}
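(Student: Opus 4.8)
The plan is to organize the dichotomy around a single question: when the equation $\alpha(\vec X)=\vec Y$ is solved formally for $\vec X$, does the resulting expression represent an element of Cohn's skew field of fractions $\mathcal D$ of $\k\!<\!\vec x\!>$? First I would dispose of the degenerate case. Reducing $\alpha$ through the algebra of $1\times 1$ generic matrices yields an ordinary polynomial endomorphism of affine $n$-space; if its linear part is singular it is not locally injective, and this already falls under the second alternative via the packing argument below. So whenever I aim at the first alternative I may assume the linear part of $\alpha$ is invertible, and after composing with an affine transformation (which affects neither alternative) that it is the identity, so that $\alpha: x_i\mapsto x_i+(\text{higher order terms})$ and the formal inverse $\alpha^{-1}=\sum_t t(x)$ of \eqref{EqTerm} exists.

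The branch point is whether \emph{all} finite dimensional reductions $\varphi_A(\alpha)$ are invertible. Suppose first that they are. Then I would assemble a rational inverse over $\mathcal D$ as follows: by the semigroup-homomorphism property of $\varphi_A$ (which sends automorphisms to automorphisms), the reduction of $\alpha$ modulo every $T$-ideal of generic matrices is invertible, and since every relatively free affine algebra of characteristic $0$ is a quotient of the algebra of generic matrices, the formal series $\sum_t t(x)$ behaves coherently and with controlled complexity across the whole tower of matrix reductions. This control is what lets me recognize $\sum_t t(x)$ as a genuine element of $\mathcal D$, yielding rational invertibility of $\alpha$, and with it the rational invertibility of each finite dimensional reduction, as the first alternative requires.

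In the opposite branch some reduction $\varphi_{\mathbb M_m(\k)}(\alpha)$ fails to be invertible, and here I would run the deformation argument of \S\ref{SbSbScPacking} essentially verbatim. Take $A=\mathbb M_{mn}(\k)$ and let $B\subset A$ be the block-diagonal subalgebra of matrices of the form \eqref{EqMatrType}; note that $\alpha(B)\subseteq B$ since $\alpha$ is a polynomial mapping. Choosing the right-hand sides $Y_i$ block-diagonal, the system \eqref{EqMatnk} has a unique solution $\vec X$ over a finite extension, and the non-invertibility of the reduction forces $\vec X\notin B$. Conjugation by the matrices $E_\lambda^\ell$ fixes each block-diagonal $Y_i$ while moving $\vec X$, producing a non-constant parametric family of solutions; in particular $\vec X$ is genuinely outside $B$ whereas $\alpha(\vec X)=\vec Y\in B$. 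Hence $\alpha(A\setminus B)$ meets $B$, so $\alpha(A\setminus B)\ne A\setminus B$, and $B$ is a test algebra for this finite dimensional reduction, which is the second alternative.

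The main obstacle I expect is the passage in the first branch from ``every finite dimensional reduction is invertible'' to ``$\alpha$ is rationally invertible over $\mathcal D$.'' Invertibility of each reduction only furnishes compatible inverses on each matrix quotient; one must bound the complexity (the degree, or the length of the admissible terms $t$) of the formal inverse uniformly in $m$, so that the limit lands inside Cohn's field of fractions rather than merely in an inverse limit of matrix algebras. I would attack this through the Makar-Limanov embedding of $\k\!<\!\vec x\!>$ into the skew field of fractions of a Weyl algebra, transporting the block-diagonal and deformation constructions into that setting, so that the absence of a test algebra forces exactly the uniform bound needed to produce a bona fide rational inverse.
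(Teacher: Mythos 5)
You should know at the outset that the paper does not actually prove this theorem: immediately after stating it, the authors write that they ``do not go into details,'' referring the reader to Yagzhev's papers \cite{Yagzhev8} and \cite{Yagzhev9}, and the surrounding text only lists the ingredients (the block-diagonal/deformation technique, its extension to skew fields and Ore extensions such as Weyl algebras, and Makar-Limanov's embedding of $\k\!<\!\vec{x}\!>$ into the ring of fractions of a Weyl algebra). Your proposal assembles exactly these ingredients, so in spirit it follows the route the paper gestures at; but as a proof it has a genuine gap, and it sits precisely where you yourself locate it.

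Concretely: in the first branch you must produce an element of Cohn's skew field of fractions inverting $\alpha$, and nothing in your argument does this. The formal inverse $\sum_t t(x)$ is an infinite sum of terms; invertibility of every reduction $\varphi_A(\alpha)$ only gives you a compatible system of inverses on the matrix quotients, i.e.\ an element of an inverse limit, and there is no a priori reason such an element is a noncommutative \emph{rational} function. The phrase ``behaves coherently and with controlled complexity across the whole tower of matrix reductions'' is the conclusion you need, not a consequence of anything you have established; and the final paragraph, which says you would ``attack this through the Makar-Limanov embedding,'' names the tool without exhibiting the mechanism by which the absence of a test algebra yields the uniform degree bound. Since this implication is the entire content of the dichotomy (the second branch is comparatively routine), the proposal does not close the theorem. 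A secondary weakness: in the second branch you assert that non-invertibility of the reduction \emph{forces} the unique solution $\vec X$ of the block-diagonal system off the block-diagonal subalgebra $B$; the paper itself only claims the solution ``need not'' be block diagonal in that case, and you would need an actual argument that a block-diagonal solution over the relevant field extension would reassemble into an inverse of the reduction, contradicting the hypothesis of that branch. You should also verify that the power-series solution genuinely lives in a finite dimensional algebra over a field extension (so that $B$ is a test algebra for a \emph{finite dimensional} reduction), rather than only in a formal completion.
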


This theorem implies the Jacobian conjecture for free associative
algebras. We do not go into details,
 referring the reader to the
papers \cite{Yagzhev8} and \cite{Yagzhev9}.

\medskip

{\bf Remark.} The same idea is used in quantum physics. The
polynomial $x^2+y^2+z^2$ cannot be decomposed for any commutative
ring of coefficients. However, it can decomposed using
noncommutative ring of coefficients (Pauli matrices). The Laplace
operator in $3$-dimensional space can be decomposed in such a
manner.

\subsubsection{Reduction to nonzero characteristic}

 One can work with deformations equally well in nonzero
characteristic. However, the naive Jacobian condition does not
give us parametric families, because of
 consequences  of inseparability. Hence it is interesting using
deformations to get a reasonable version of the $\JC$
 for characteristic $p> 0$, especially because of recent
progress in the    $\JC$  related to the reduction of holonomic
modules to the case of characteristic $p$ and investigation of the
$p$-curvature or Poisson brackets on the center
\cite{BelovKontsevich1}, \cite{BelovKontsevich}, \cite{Tsuch}.

In his very last paper \cite{YagzhevLast} A.V.~Yagzhev approached
the    $\JC$ using positive characteristics. He noticed that the
existence of a counterexample
 is
equivalent to the existence of an Engel, but not Yagzhev, finite
dimensional ternary algebra in each positive characteristic $p\gg
0$. (This fact is also used in the papers
\cite{BelovKontsevich1,BelovKontsevich,Tsuch}.)

If a counterexample to the    $\JC$ exists, then  such an algebra
$A$ exists  even over a finite field, and hence  can be finite. It
generates a locally finite variety of algebras that are of Engel
type, but not Yagzhev. This situation can be reduced to the case
of a locally semiprime variety. Any relatively free algebra of
this variety is semiprime, and the centroid of its localization is
a finite direct sum of fields. The situation can be reduced to one
field, and he tried to construct an embedding which is not an
automorphism. This would contradict the finiteness property.

Since a reduction of an endomorphism as a mapping on points of
finite height may be an automorphism,  the issue of injectivity
also arises. However, this approach looks promising, and may
involve new ideas, such as  in the papers
\cite{BelovKontsevich1,BelovKontsevich,Tsuch}. Perhaps different
infinitesimal conditions (like the Jacobian condition in
characteristic zero) can be found.

\subsection{The Jacobian Conjecture for other classes of
algebras} Although the Jacobian Conjecture remains open for
commutative associative algebras, it has been established for
other classes of algebras, including free associative algebras, free Lie algebras, and free metabelian algebras. See \S\ref{SbScJCArbVar} for further details.

 An
algebra is {\it metabelian} if it satisfies the identity
$[x,y][z,t]=0$.

The case of free metabelian algebras, established by Umirbaev
\cite{UmirbaevMetabelean}, involves some interesting new ideas
that we describe now. His method of proof is by means of
co-multiplication, taken from the theory of Hopf algebras and
quantization. Let $A^{op}$ denote the opposite algebra of  the
free associative algebra $A$, with generators $t_i$. For $f\in A$
we denote the corresponding element of $A^{op}$ as  $f^*$. Put
$\lambda: A^{op}\otimes A\to A$ be the mapping such that
$\lambda(\sum f_i^* \otimes g_i)=\sum f_ig_i$.
$I_A:=\ker(\lambda)$ is a free $A$ bimodule with generators
$t_i^*\otimes 1-1\otimes t_i$. The mapping $d_A: A\to I_A$ such
that $d_A(a)=a^*\otimes 1-1\otimes a$ is called the {\it universal
derivation} of $A$. The {\it Fox derivatives} $\partial a/\partial
t_i\in A^{op}\otimes A$ \cite{F} are defined via $d_A(a)=\sum_i
(t_i^*\otimes 1-1\otimes t_i)\partial a/\partial t_i$,
cf.~\cite{DicksLevin} and~\cite{UmirbaevMetabelean}.

Let $C=A/\Id([A,A])$, the free commutative associative algebra,
and let $B=A/\Id([A,A])^2$, the free metabelian algebra. Let
$$\partial(a)=(\partial a/\partial t_1,\dots,\partial a/\partial
t_n).$$ One can define the natural derivations

$$\bar{\partial}:A\to (A'\otimes A)^n\to(C'\otimes C)^n,$$

\begin{equation}\label{nat} \tilde{\partial}:A\to (C'\otimes C)^n\to C^n.\end{equation}
where the mapping $(C'\otimes C)^n)\to C^n$ is induced by
$\lambda$.  Then $\ker(\bar{\partial})=\Id([A,A])^2+F$ and
$\bar{\partial}$ induces a derivation $B\to (C'\otimes C)^n$,
whereas $\tilde{\partial}$ induces the usual derivation $C\to
C^n$. Let $\Delta: C\to C'\otimes C$ be the mapping induced by
$d_A$, i.e., $\Delta(f)=f^*\otimes 1-1\otimes f$, and let
$z_i=\Delta(x_i)$.
The {\it Jacobi
matrix} is defined in the  natural way, and provides the formulation
of the
 $\JC$ for free metabelian algebras. One of the crucial steps in
proving the~$\JC$ for free metabelian algebras is the following
homological lemma from~\cite{UmirbaevMetabelean}:

\begin{lemma}
Let $\vec{u}=(u_1,\dots,u_n)\in (C^{op}\otimes C)^n$. Then
$\vec{u}=\bar{\partial}(\bar{w})$ for some $w\in\Id([A,A])$ iff
$$\sum z_iu_i=0.$$
\end{lemma}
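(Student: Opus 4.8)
The plan is to read the asserted equivalence as the exactness, at the middle term, of the sequence
$$\Id([A,A])/\Id([A,A])^2 \xrightarrow{\ \bar\partial\ } (C^{op}\otimes C)^n \xrightarrow{\ z\cdot\ } C^{op}\otimes C, \qquad (z\cdot\vec u):=\sum_i z_i u_i,$$
and to prove the two inclusions separately. For the necessity (``only if''), I would first note that the displayed sequence is a complex. The defining identity of the Fox derivatives, $d_A(a)=\sum_i (t_i^*\otimes 1-1\otimes t_i)\,\partial a/\partial t_i$, projects under $A^{op}\otimes A\to C^{op}\otimes C$ to the relation $\Delta(\bar f)=\sum_i z_i\bar\partial_i(f)$ for every $f\in A$ with image $\bar f\in C$, because $t_i^*\otimes 1-1\otimes t_i\mapsto z_i$ and $d_A(f)\mapsto\Delta(\bar f)$. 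Hence if $\vec u=\bar\partial(\bar w)$ with $w\in\Id([A,A])$, then $\bar w=0$ in $C$, so $\sum_i z_i u_i=\Delta(\bar w)=0$.

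For the converse I would exploit the concrete identification $C^{op}\otimes C\cong \k[x_1,\dots,x_n,x_1^*,\dots,x_n^*]$, under which $z_i=x_i^*-x_i$. The linear change of coordinates replacing the $x_i^*$ by the $z_i$ turns $(z_1,\dots,z_n)$ into part of a coordinate system, so it is a regular sequence in this polynomial ring. Therefore the Koszul complex on $(z_1,\dots,z_n)$ is exact, and the kernel of $z\cdot$ is generated, as a $C^{op}\otimes C$-module, by the trivial (Koszul) syzygies $\zeta_{ij}:=z_j e_i-z_i e_j$ ($i<j$), where $e_1,\dots,e_n$ is the standard basis of the free module. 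It thus suffices to realize every $\zeta_{ij}$, together with all of its $C^{op}\otimes C$-multiples, as $\bar\partial(\bar w)$ with $w\in\Id([A,A])$.

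To that end I would compute $\bar\partial$ on the commutator ideal explicitly. Since the correction terms in the Leibniz rule for Fox derivatives are themselves commutators, they vanish after projection to the commutative ring $C^{op}\otimes C$, leaving the clean product rule $\bar\partial_k(uv)=(\bar\partial_k u)(1\otimes\bar v)+(\bar u^*\otimes 1)(\bar\partial_k v)$. Applying it to $[t_i,t_j]=t_it_j-t_jt_i$ and using $\bar\partial_k t_l=\delta_{kl}$ gives $\bar\partial_k([t_i,t_j])=\delta_{kj}z_i-\delta_{ki}z_j$, that is $\bar\partial([x_i,x_j])=-\zeta_{ij}$. Applying the same rule to $w=u\,[t_i,t_j]\,v$ and using $\overline{[t_i,t_j]}=0$ to kill the two outer terms yields $\bar\partial(u[t_i,t_j]v)=-(\bar u^*\otimes\bar v)\,\zeta_{ij}$. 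As $u,v$ range over monomials the elements $\bar u^*\otimes\bar v$ span $C^{op}\otimes C$, so the image of $\bar\partial$ on $\Id([A,A])/\Id([A,A])^2$ contains every $C^{op}\otimes C$-multiple of each $\zeta_{ij}$, hence all of $\ker(z\cdot)$ by the Koszul step. Combined with the fact that the commutator ideal of the free metabelian algebra is generated, as a $C^{op}\otimes C$-module, by the classes of the $[x_i,x_j]$, this gives the reverse inclusion and closes the argument.

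The main obstacle is this last step on the noncommutative side: one must justify rigorously that the Leibniz correction terms really do die under projection (equivalently, that the projected Fox derivatives obey the commutative product rule used above), and that $\Id([A,A])/\Id([A,A])^2$ is exactly the $C^{op}\otimes C$-module generated by the $[x_i,x_j]$, with no hidden relations obstructing surjectivity of $\bar\partial$ onto the generated submodule. By contrast, the regular-sequence and Koszul input is routine once the identification $z_i=x_i^*-x_i$ is in hand; all the delicate bookkeeping lies in controlling $\bar\partial$ on the commutator ideal.
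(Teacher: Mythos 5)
The paper does not actually prove this lemma: it is quoted verbatim from Umirbaev~\cite{UmirbaevMetabelean} as an ingredient in his proof of the $\JC$ for free metabelian algebras, so there is no in-text argument to measure yours against. Judged on its own, your proof is correct and complete, and it is the natural homological argument. The forward direction is exactly the projection of the defining identity of the Fox derivatives, as you say. For the converse, the Koszul input is legitimate: under $C^{op}\otimes C\cong\k[x_1,\dots,x_n,x_1^*,\dots,x_n^*]$ the elements $z_i=x_i^*-x_i$ are part of a coordinate system, hence a regular sequence, so $\ker(\vec u\mapsto\sum_i z_iu_i)$ is generated by the trivial syzygies $\zeta_{ij}$; and your computations $\bar\partial([t_i,t_j])=-\zeta_{ij}$ and $\bar\partial(u[t_i,t_j]v)=-(\bar u^*\otimes\bar v)\,\zeta_{ij}$ check out. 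Two of the ``obstacles'' you flag at the end are in fact non-issues. First, the Fox derivative product rule holds exactly in $A^{op}\otimes A$, namely $\partial_k(ab)=\partial_k(a)(1\otimes b)+\partial_k(b)(a^*\otimes 1)$ (this follows from $d_A(ab)=d_A(a)(1\otimes b)+d_A(b)(a^*\otimes 1)$ and freeness of $I_A$ on the generators $t_k^*\otimes 1-1\otimes t_k$), so there are no correction terms to kill; projecting to the commutative ring $C^{op}\otimes C$ gives precisely the rule you use. Second, you do not need any structural control of $\Id([A,A])/\Id([A,A])^2$: the equivalence only requires that every $C^{op}\otimes C$-multiple of every $\zeta_{ij}$ lie in $\bar\partial(\Id([A,A]))$, and this already follows from your explicit formula together with the standard fact that $\Id([A,A])$ is spanned by the elements $u[t_i,t_j]v$ with $u,v$ monomials (obtained by expanding commutators of monomials via the derivation identities for $[\,\cdot\,,\cdot\,]$). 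So the argument closes with no genuine gap remaining.
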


The proof also requires the following theorem:

\begin{theorem}
Let $\varphi\in\End(C)$. Then $\varphi\in\Aut(C)$ iff $\Id(\Delta(
\varphi(x_i) ))_{i=1}^n=\Id(z_i)_{i=1}^n$.
\end{theorem}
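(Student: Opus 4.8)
\hspace{1em} The plan is to prove this characterization of automorphisms of the free commutative associative algebra $C=A/\Id([A,A])$ by exploiting the naturality of the comultiplication $\Delta$ and the fact that $\Delta(x_i)=z_i$ generate the relevant module. The forward direction is routine: if $\varphi\in\Aut(C)$, then applying the chain rule (the functoriality of $\Delta$ and of the Fox/universal derivation, as recorded earlier) to $\varphi$ and to $\varphi^{-1}$ shows that the ideal generated by the $\Delta(\varphi(x_i))$ equals the ideal generated by the $\Delta(x_i)=z_i$, since an invertible change of generators in $C$ induces an invertible transformation on the images under $\Delta$. I would first write this out: the endomorphism $\varphi$ induces a map on $C'\otimes C$ via $\Delta\circ\varphi = (\varphi'\otimes\varphi)\circ\Delta$, and invertibility of $\varphi$ forces the two ideals to coincide.

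The substance is the converse. First I would note that $\Id(z_i)_{i=1}^n$ is exactly the kernel-type object controlling which elements lie in the image of the differential, so the hypothesis $\Id(\Delta(\varphi(x_i)))=\Id(z_i)$ says that the $\Delta(\varphi(x_i))$ form another generating set of the same $C$-submodule of $C'\otimes C$ that the $z_i$ generate. The key step is to translate this module-theoretic equality into invertibility of the Jacobian matrix of $\varphi$ over $C$: since $\Delta(\varphi(x_i))=\sum_j (\text{coefficient})\,z_j$ with the coefficients being precisely the entries of the Jacobi matrix (via $\tilde\partial$ and the relation $\Delta(f)=\sum_i z_i\,\partial f/\partial t_i$ in the commutative quotient), the equality of ideals forces this coefficient matrix to admit a one-sided inverse over $C$, hence (by standard commutative algebra over the polynomial ring $C$, using that a square matrix with a one-sided inverse is invertible) to be invertible. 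I would then invoke the classical commutative Jacobian criterion in the form already available for the relevant setup, or more directly the homological lemma preceding this theorem, to conclude that $\varphi$ is an automorphism.

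The main obstacle I anticipate is the converse, and specifically passing from the equality of \emph{ideals} generated by the $\Delta(\varphi(x_i))$ and the $z_i$ to invertibility of the actual change-of-generators matrix. Equality of generated submodules does not a priori give an invertible transition matrix unless one controls the syzygies among the $z_i$; here the preceding homological lemma is essential, since it identifies exactly which relations $\sum z_i u_i=0$ can occur (namely those coming from $\bar\partial$ of elements of $\Id([A,A])$). I would use that lemma to show that any ambiguity in expressing $\Delta(\varphi(x_i))$ in terms of the $z_j$ is absorbed into the commutator ideal and therefore vanishes in $C$, pinning down the transition matrix uniquely and forcing it to be invertible. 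The second delicate point is ensuring that invertibility of the Jacobian over $C$ genuinely yields invertibility of $\varphi$ rather than mere injectivity; I would address this by combining the chain-rule/functoriality of $\Delta$ with the explicit reconstruction of a candidate inverse endomorphism from the inverse Jacobian matrix, checking that the composite is the identity on generators.
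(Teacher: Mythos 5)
First, a point of reference: the paper itself gives no proof of this statement --- it is quoted from Umirbaev's paper \cite{UmirbaevMetabelean} as an ingredient needed for the metabelian Jacobian Conjecture --- so there is no in-paper argument to compare yours against, and your proposal must stand on its own. Your forward direction is fine: $\Delta\circ\varphi=(\varphi^{op}\otimes\varphi)\circ\Delta$ applied to $\varphi$ and to $\varphi^{-1}$ gives the two inclusions of ideals. The first half of your converse is also sound: writing $\Delta(\varphi(x_i))=\sum_j z_j u_{ji}$ with $u_{ji}$ reducing under $\lambda$ to the Jacobian entries, and using that the syzygies of the regular sequence $z_1,\dots,z_n$ are Koszul (so have all entries in $\Id(z_i)$), the equality of ideals does force the Jacobian matrix of $\varphi$ to have a one-sided, hence two-sided, inverse over $C$.

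The fatal gap is the step you use to finish. There is no ``classical commutative Jacobian criterion'' asserting that invertibility of the Jacobian matrix over $C=\k[x_1,\dots,x_n]$ implies $\varphi\in\Aut(C)$: that implication \emph{is} the Jacobian Conjecture, open for $n\ge 2$, and the ``reconstruction of a candidate inverse from the inverse Jacobian matrix'' that you propose as a fix only produces a formal power series inverse, not a polynomial one --- again exactly the JC. The entire content of Umirbaev's theorem is that the hypothesis $\Id(\Delta(\varphi(x_i)))=\Id(z_i)$ is \emph{strictly} stronger than Jacobian invertibility, and your reduction modulo the square of the diagonal ideal discards precisely that extra strength. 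Concretely, the hypothesis says that the polynomials $f_i(\vec{x})-f_i(\vec{y})$ generate the ideal of the diagonal in $\k[\vec{x},\vec{y}]$; specializing $\vec{y}\mapsto a$ shows that the $f_i-f_i(a)$ generate the maximal ideal $\goth m_a$ for \emph{every} point $a$, which yields injectivity of $\varphi$ on points together with invertibility of the differential at every point --- information that a constant nonzero Jacobian alone does not supply (a hypothetical JC counterexample with two points $a\ne b$, $f(a)=f(b)$, would violate the ideal equality at $(a,b)$ while having invertible Jacobian). A correct proof of the converse must exploit this full strength, e.g.\ via a monomorphism/\'etale argument combined with an Ax--Grothendieck-type surjectivity statement, or via Umirbaev's own algebraic construction; it cannot be routed through the Jacobian matrix over $C$ alone. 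The homological lemma you cite controls syzygies of the $z_i$ in the metabelian setting and does not bridge this gap either.
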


The paper \cite{UmirbaevMetabelean} also includes the following
result:

\begin{theorem}  \label{ThBClifting}
Any automorphism of $C$ can be extended to an automorphism of $B$,
using the $\JC$ for the free metabelian algebra $B$.
\end{theorem}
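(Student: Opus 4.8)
The plan is to lift $\varphi$ along the canonical surjection $\pi\colon B\to C$ and then to certify the lift as an automorphism through its metabelian Jacobian, so that the $\JC$ for $B$ applies. First I would record that $\pi$ has kernel $N=\Id([A,A])/\Id([A,A])^2$, a square-zero ideal of $B$ (since $N^2$ is the image of $\Id([A,A])^2$, which vanishes in $B$); thus $B$ is a square-zero extension of the free commutative algebra $C$ by the $C$-bimodule $N$. Writing $\varphi(x_i)=f_i$, I would pick for each $i$ an arbitrary preimage $\hat f_i\in B$ of $f_i$ and, using that $B$ is relatively free, define $\tilde\varphi\in\End(B)$ by $x_i\mapsto\hat f_i$. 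By construction $\pi\circ\tilde\varphi=\varphi\circ\pi$, so $\tilde\varphi$ covers $\varphi$; since $\Id([A,A])$ is characteristic, $\tilde\varphi$ preserves $N$ and induces a $\varphi$-semilinear map there. Conceptually $\tilde\varphi$ is then an automorphism as soon as this induced map on $N$ is bijective, and it is exactly this point that the Jacobian machinery will secure.

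The heart of the argument is the metabelian Jacobi matrix $J=(\bar\partial_j\hat f_i)$ of $\tilde\varphi$, with entries in $C'\otimes C$. I would use the identity $\Delta(g)=\sum_j z_j\,\bar\partial_j(g)$, which is the projection to $C'\otimes C$ of the defining relation $d_A(a)=\sum_j(t_j^*\otimes 1-1\otimes t_j)\,\partial a/\partial t_j$; applied to $g=f_i$ it gives $\Delta(f_i)=\sum_j z_j J_{ij}$, a relation that depends only on $f_i\in C$ and so is insensitive to the chosen lift. The automorphism criterion for $C$ stated above now reads $\Id\bigl(\sum_j z_j J_{ij}\bigr)_i=\Id(z_i)_i$ in $C'\otimes C$. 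The inclusion $\subseteq$ is automatic, while the reverse inclusion yields a matrix $P$ over $C'\otimes C$ with $\sum_k z_k\bigl((PJ)_{ik}-\delta_{ik}\bigr)=0$ for each $i$. By the homological lemma above, each relation $\sum_k z_k u_k=0$ forces $\vec u=\bar\partial(\bar w)$ for some $w\in\Id([A,A])$; applied to the rows of $PJ-I$, this identifies the whole discrepancy between $J$ and an invertible matrix with $\bar\partial$ of elements of $\Id([A,A])$, which is precisely the slack the metabelian Jacobian condition tolerates. In this way the $C$-criterion for $\varphi$ is upgraded to the statement that $J$ satisfies the Jacobian condition underlying the $\JC$ for $B$.

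With the Jacobian condition for $\tilde\varphi$ established, the $\JC$ for the free metabelian algebra $B$ (proved in \cite{UmirbaevMetabelean}) gives $\tilde\varphi\in\Aut(B)$, and $\pi\circ\tilde\varphi=\varphi\circ\pi$ then shows that $\tilde\varphi$ extends $\varphi$, proving the theorem. The main obstacle I anticipate is the middle step: converting the ideal-theoretic automorphism criterion for $C$ into the genuine metabelian Jacobian condition for the lift. A naive determinantal argument fails, because the kernel $K=\ker(\lambda)=\Id(z_i)$ of $\lambda\colon C'\otimes C\to C$ is not contained in the Jacobson radical, so the invertibility of the commutative Jacobian $\lambda(J)$---a nonzero scalar, since $\varphi$ is a polynomial automorphism---does not by itself propagate to $J$ over $C'\otimes C$. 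The homological lemma is precisely the tool that repairs this, pinning the obstruction on $\bar\partial$ of commutators; making this identification rigorous, and confirming that it is independent of the choice of lift $\hat f_i$, is where the real work lies.
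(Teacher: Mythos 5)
The paper itself gives no proof of this theorem --- it is quoted from \cite{UmirbaevMetabelean} --- so your proposal must stand on its own, and its overall shape (lift $\varphi$ to $\tilde\varphi\in\End(B)$, verify the metabelian Jacobian hypothesis, invoke the $\JC$ for $B$) is certainly the intended one. But your very first step, ``pick for each $i$ an \emph{arbitrary} preimage $\hat f_i$,'' is already fatal: an arbitrary lift of an automorphism of $C$ need not be an automorphism of $B$, not even when $\varphi$ is the identity. Take $n=2$ and $\theta\in\End(B)$ given by $x_1\mapsto x_1+[x_1,x_2]$, $x_2\mapsto x_2$, a lift of $\mathrm{id}_C$. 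By the homological lemma, $\bar\partial$ identifies $N=\Id([A,A])/\Id([A,A])^2$ with the syzygy module of the regular sequence $z_1,z_2$ in the polynomial ring $C^{op}\otimes C$, which is free of rank one; thus $N$ is the free cyclic $C^{op}\otimes C$-module on $c=[x_1,x_2]$. Since $\theta$ induces the identity on $C$, its restriction to $N$ is $C^{op}\otimes C$-linear and sends $c$ to $[x_1+c,x_2]=c+cx_2-x_2c$, i.e., it is multiplication by an element of the form $1\pm z_2$ (the sign depending on the bimodule convention). That element is a non-unit of the polynomial ring, so $\theta|_N$ is injective but not surjective, and $\theta\notin\Aut(B)$; consistently, $\det J(\theta)=1\pm z_2$ is not invertible, so the Jacobian hypothesis of the metabelian $\JC$ genuinely fails for this lift.

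The same example shows exactly where your second paragraph breaks. For $\varphi=\mathrm{id}$ the criterion $\Id(\Delta(f_i))_i=\Id(z_i)_i$ holds with $P=I$, and $PJ-I=J-I$ does have rows in $\bar\partial(\Id([A,A]))$ --- yet $J$ is not invertible. So the implication you assert, that ``$PJ-I$ has rows of the form $\bar\partial(w)$'' upgrades to ``$J$ satisfies the Jacobian condition,'' is simply false: as you yourself note, the entries of such rows lie in $\ker\lambda=\Id(z_i)$, which is not contained in any radical, and nothing in your argument overcomes this. The actual content of the theorem is precisely the step you defer: one must \emph{choose}, or iteratively correct, the commutator parts $w_i\in N$ of the lifts $\hat f_i=f_i+w_i$ so that the perturbed matrix $J+W$ --- where the rows of $W$ range over the syzygy module $\bar\partial(N)$ --- becomes invertible over $C^{op}\otimes C$; it is here that the ideal equality $\Id(\Delta(f_i))_i=\Id(z_i)_i$ and the homological lemma must actually be spent. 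Until that construction is supplied, the proof has not engaged its hardest point.
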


This is a nontrivial result, unlike the extension of an automorphism
of $B$ to an automorphism of $A/\Id([A,A])^n$ for any $n>1$.
%
%
%
%
%
%
%
%
%

\subsection{Questions related to the Jacobian Conjecture}                            \label{SbScRelQuestJC}
Let us turn to other interesting questions which can be linked to
the Jacobian Conjecture. The quantization procedure is a bridge
between the commutative and noncommutative cases and is deeply
connected to the $\JC$ and related questions. Some of these
questions also are discussed in the paper \cite{DrYuLift}.

Relations between the free associative algebra and the classical
commutative situation are very deep. In particular, Bergman's
theorem that any commutative subalgebra of the free associative
algebra is isomorphic to a polynomial ring in one indeterminate is
the noncommutative analog of Zak's theorem \cite{Zaks} that any
integrally closed subring  of a polynomial ring of Krull dimension
$1$ is isomorphic to a polynomial ring in one indeterminate.

For example, Bergman's theorem is used to describe the
automorphism group $\Aut(\End(\k\langle x_1,\dots,x_n\rangle))$
\cite{BelovLipBer}; Zak's theorem is used in the same way to
describe the group $\Aut(\End(\k[x_1,\dots,x_n]))$
\cite{BelovLiapiansk2}.

\medskip
{\bf Question.}\ {\it Can one prove Bergman's theorem via
quantization?}
\medskip

Quantization could be a key idea for understanding Jacobian type
problems in  other varieties of algebras.

\paragraph{1. Cancellation problems.}$ $

\medskip

We recall three classical problems.

\medskip

  {\bf 1.} {\it Let $K_1$ and $K_2$ be affine domains for
which $K_1[t]\simeq K_2[t]$. Is it true that $K_1\simeq K_2$? }

  {\bf 2.} {\it Let $K_1$ and $K_2$ be an affine fields
for which $K_1(t)\simeq K_2(t)$. Is it true that $K_1\simeq K_2$?
In particular, if $K(t)$  is a field of rational functions  over
the field~$\k$, is it true that $K$ is also a field of rational
functions over $\k$? }

  {\bf 3.} {\it If $K[t]\simeq \k [x_1,\dots,x_n]$, is it
true that $K\simeq \k [x_1,\dots,x_{n-1}]$? }
\medskip

The answers to Problems 1 and 2 are `No' in general (even if $\k
={\mathbb C}$); see the fundamental paper~\cite{BCSS}, as well as
\cite{BYML} and the references therein. However,  Problem 2 has a
positive solution in low dimensions. Problem 3 is currently called
the {\it Cancellation Conjecture}, although Zariski's original
cancellation conjecture was for fields (Problem~2). See
(\cite{MiySugie}, \cite{KZ}, \cite{Dan}, \cite{ShYu1}) for
Zariski's conjecture and related problems. For $n\ge 3$, the
Cancellation Conjecture (Problem 3) remains open, to the best of
our knowledge, and it is reasonable to pose the Cancellation
Conjecture for free associative rings and ask the following:

\medskip
\noindent {\bf Question.} {\it If $K*\k [t]\simeq \k
\!\!<\!x_1,\dots,x_n\!>$, then is $K\simeq \k
\!<\!x_1,\dots,x_{n-1}\!>? $}
\medskip

This question was solved  for $n=2$  by V.~Drensky and J.T.~Yu
\cite{DrenskyYu}.

\paragraph{2. The Tame Automorphism Problem.} Yagzhev utilized his
approach to study the tame automorphism problem. Unfortunately,
these  papers are not preserved.

It is easy to see that every endomorphism $\phi$ of a commutative
algebra can be lifted to some endomorphism of the free associative
algebra, and hence to some endomorphism of the algebra of generic
matrices. However, it is not clear that any automorphism $\phi$
can be lifted to an automorphism.

We recall that an automorphism  of $\k [x_1,\dots,x_n]$ is  {\em elementary} if it has the form
$$x_1 \mapsto x_1 + f(x_2, \dots, x_n), \quad x_i \mapsto x_i, \quad \forall i \ge 2.$$
 A  {\em tame automorphism} is a product of elementary automorphisms, and a non-tame
automorphism is called  {\em wild}. The ``tame automorphism
problem'' asks whether any automorphism is tame. Jung~\cite{J} and
van der Kulk \cite{vdK} proved this for $n=2$, (also see
\cite{Nie1,Nie2} for free groups, \cite{Cohn} for free Lie
algebras, and \cite{ML,Cz} for free associative algebras), so one
takes $n>2.$

 Elementary automorphisms can be
lifted to automorphisms of the free associative algebra; hence
every tame automorphism can be so lifted. If an automorphism
$\varphi$ cannot be lifted to an automorphism of the algebra of
generic matrices, it cannot be tame. This give us approach to the
tame automorphism problem.

We can lift an automorphism of $\k [x_1,\dots,x_n]$ to an
endomorphism of $\k \!\!<x_1,\dots,x_n\!>$ in many ways. Then
replacing $x_1,\dots,x_n$ by $N\times N$ generic matrices induces
a polynomial mapping $F_{(N)}: \k ^{nN^2}\to \k ^{nN^2}$.

For each automorphism $\varphi$, the invertibility of this mapping
can be transformed into  compatibility   of some system of
equations. For example, Theorem~10.5 of \cite{Peretz} says that
the Nagata automorphism is wild, provided that a certain system of
five equations in $27$ unknowns has no solutions. Whether Peretz'
method  can effectively attack tameness questions remains to be
seen. The wildness of the Nagata automorphism was established by
Shestakov and Umirbaev~\cite{ShUm}. One important ingredient in the proof is
 {\it degree estimates} of an expression $p(f,g)$ of algebraically independent polynomials $f$ and $g$ in terms of the degrees of $f$ and $g$,
 provided neither leading term is proportional to a power of the other, initiated by Shestakov and Umirbaev~\cite{ShUm0}.  An exposition based on their
method is given in Kuroda \cite{Kuroda}.

One of the most important tools is the degree estimation technique, which
in the multidimensional case becomes the analysis of leading
terms, and is more complicated.
We refer to the deep papers
\cite{BonnetVenerau,Kuroda,Marek}. Several papers of Kishimoto
 contain gaps, but also provide deep insights.

One can also ask the weaker question of ``coordinate tameness:''
 Is the image of $(x,y,z)$ under the Nagata automorphism the image  under some
 (other) tame automorphism? This also fails, by
 \cite{UY}.

An automorphism $\varphi$ is called {\em stably tame} if,  when
several new indeterminates~$\{t_i\}$ are adjoined, the extension of
$\varphi$ given by  $\varphi'(t_i)=t_i$ is tame; otherwise it is
called {\em stably wild}. Stable tameness of automorphisms of $\k
[x,y,z]$ fixing~$z$ is proved in \cite{BEW}; similar results for
$\k \langle x,y,z\rangle$ are given in \cite{BKYStbTmn}.

Yagzhev tried to construct wild automorphisms via polynomial
automorphisms of the Cayley-Dickson algebra with base
$\{\vec{e}_i\}_{i=1}^8$, and the set $\{\nu_i, \xi_i,
\varsigma_i\}_{i=1}^8$ of commuting indeterminates. Let
 $$x=\sum \nu_i\vec{e}_i,\ y=\sum \xi_i\vec{e}_i,\ z=\sum
 \varsigma_i\vec{e}_i.$$
 Let $(x,y,z)$ denote the associator $(xy)z - x (yz)$ of the elements $x,y,z$,
 and write
 $$(x,y,z)^2=\sum f_i(\vec{\nu},\vec{\xi},\vec{\varsigma})\vec{e}_i.$$

Then the endomorphism $G$ of  the polynomial algebra given by
 $$G:\
 \nu_i\to\nu_i+f_i(\vec{\nu},\vec{\xi},\vec{\varsigma}),\quad
 \xi_i\to\xi_i,\ \varsigma_i\to\varsigma_i,$$
is an automorphism, which likely is stably wild.

In the free associative case, perhaps it is possible to construct
an example of an automorphism, the wildness of which could be
proved by considering its Jacobi endomorphism
(Definition~\ref{Jac1}). 
Yagzhev tried to construct
 examples of algebras $R=A\otimes A^{op}$ over which there are invertible
matrices that cannot decompose as products of elementary ones.
Yagzhev conjectured that the automorphism
 $$x_1\to x_1+y_1(x_1y_2-y_1x_2),\ x_2\to x_2+(x_1y_2-y_1x_2)y_2,\
y_1\to y_1,\ y_2\to y_2$$
 of the free associative algebra is wild.

 Umirbaev~\cite{UmirbaevAnic}  proved  in characteristic 0 that
the {\it Anick automorphism} $x\to x + y(xy-yz)$, $y\to y$, $z\to
z+(zy-yz)y$ is wild, by using metabelian algebras. The proof uses
description of the defining relations of 3-variable automorphism
groups~\cite{U4,U5,U6}. Drensky and  Yu~\cite{DY3,DY4} proved in characteristic 0 that
the image of $x$ under the Anick Automorphism is not the
image of any tame automorphism.

\medskip
{\bf Stable Tameness Conjecture}. {\it Every automorphism of the   polynomial
 algebra $\k [x_1, \dots, x_n]$, resp.~of the free  associative algebra $\k \!\!<\!x_1 ,\dots, x_n\!>$, is stably tame}.
\medskip

Lifting in the free associative case is related to quantization.
It provides some light on the similarities and differences between
the commutative and noncommutative cases. Every tame automorphism
of the polynomial ring can be lifted to an automorphism of the
free associative algebra. There was a conjecture that {\it any
wild $z$-automorphism of $\k [x,y,z]$ (i.e., fixing $z$) over an
arbitrary field $\k$ cannot be
lifted to a $z$-automorphism of $\k\!\!<\!x,y,z\!>$.} In particular, the
Nagata automorphism cannot be so lifted \cite{DrYuLift}. This
conjecture was solved by Belov and J.-T.Yu \cite{BKY} over an arbitrary field. However,
the general lifting conjecture is still open. In particular, it is
not known whether the Nagata automorphism can be lifted to an
automorphism of the free algebra. (Such a lifting could not fix
$z$).

The paper \cite{BKY}  describes all the $z$-automorphisms of $\k
\!\!<\!x,y,z\!>$ over an arbitrary field  $\k $.
Based on that work, Belov and J.-T.Yu \cite{BKYStbTmn} proved that every $z$-automorphism
of $\k
\!\!<\!x,y,z\!>$  is stably tame, for all fields~$\k $. A~similar result in the commutative case is proved by Berson, van den Essen, and Wright~\cite{BEW}. These are    important first steps towards solving the stable tameness conjecture in the noncommutative and commutative cases.

The free associative situation is much more rigid than the
polynomial case. Degree estimates for the free associative case
are the same for prime
  characteristic   \cite{YuYungChang} as in characteristic~0~\cite{MLY}. The methodology is  different
 from the commutative case,
 for which degree estimates (as well as
examples of wild automorphisms) are not known in prime characteristic.

J.-T.Yu found some evidence of a connection between the lifting
conjecture and the Embedding Conjecture of Abhyankar and Sathaye.
Lifting seems to be ``easier''.

\subsection{Reduction to simple algebras}\label{pack1}
This subsection is devoted to finding test algebras.

Any prime algebra $B$   satisfying a system of Capelli identities
of order $n+1$ ($n$ minimal such) is said to have {\it rank} $n$.
In this case,  its operator algebra is PI.
The localization of $B$ is a
simple algebra of dimension $n$ over its centroid, which is a
field. This is the famous {\it rank theorem} \cite{RazmyslovBook}.

\subsubsection{Packing properties}\label{pack1}

\begin{definition}
Let ${\cal M}=\{{\goth M}_i: i \in I\}$ be an arbitrary set of
varieties of algebras. We say that $\cal M$ satisfies the {\em
packing property}, if for any $n\in{\mathbb N}$ there exists a
prime algebra $A$ of rank $n$ in some ${\goth M}_j$ such that any
prime algebra in any~${\goth M}_i$ of rank $n$ can be embedded
into some central extension  $K\otimes A$ of $A$.

$\cal M$ satisfies the {\em  finite packing property}  if, for any
finite set of prime algebras $A_j\in{\goth M}_i$, there exists a
prime algebra $A$ in some ${\goth M}_k$ such that each $A_j$ can
be embedded into $A$.
\end{definition}

The set of proper subvarieties of associative algebras satisfying
a system of Capelli identities of some order $k$  satisfies the
packing property (because any simple associative algebra is a
matrix algebra over field).

 However,   the  varieties of
alternative algebras satisfying a system of Capelli identities of
order $>8$,   or of Jordan algebras satisfying a system of
Capelli identities of order $>27$,    do not even satisfy the
finite packing property. Indeed, the matrix algebra of order $2$
and the Cayley-Dickson algebra cannot be embedded into a common
prime alternative algebra. Similarly, ${\mathbb H}_3$ and the
Jordan algebra of symmetric matrices cannot be embedded into a
common Jordan prime algebra. (Both of these assertions follow
easily by considering their PIs.)

It is not known whether or not  the packing property holds for
Engel algebras satisfying a system of Capelli identities; knowing
the answer would enable us to resolve the  $\JC$, as will be seen
below.

\begin{theorem}
If the set of varieties of Engel algebras (of arbitrary fixed
order) satisfying a system of Capelli identities of some order
satisfies the packing property, then the Jacobian Conjecture has a
positive solution.
\end{theorem}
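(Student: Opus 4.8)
The plan is to combine the equivalence of Theorem~\ref{eqcond} with the deformation machinery of Section~\ref{SbSbScPacking}, using the packing hypothesis to funnel all potential counterexamples into a single ambient simple algebra. By Theorem~\ref{eqcond}, in characteristic zero the Jacobian Conjecture holds if and only if every algebra $A$ of Engel type satisfying a system of Capelli identities is a Yagzhev algebra. So I fix such an $A$, say of Engel type $s$ and satisfying the system of Capelli identities $C_{n+1}$ with $n$ minimal, and aim to prove that $A$ is weakly nilpotent.

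First I would reduce to the representable case. Since $A$ satisfies $C_{n+1}$, Zubrilin's theorem furnishes a Baer--Amitsur sequence $B_0\subseteq B_1\subseteq\dots\subseteq B_n$ of strongly nilpotent ideals with $A/B_n$ representable. By Lemma~\ref{LeSandvich} each strongly nilpotent layer $B_i/B_{i-1}$ is filtered by sandwich ideals, and because $A$ (hence every subquotient) is of Engel type, Theorem~\ref{ThSandvich} lets me lift weak nilpotence through each such sandwich extension. Consequently it suffices to prove that the representable, semiprime algebra $A/B_n$ is Yagzhev. Such an algebra embeds into a finite-dimensional one and is a subdirect product of its prime components, each of which, by the rank theorem, localizes to a simple algebra of dimension equal to its rank (at most $n$) over a field centroid. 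Since weak nilpotence with a fixed order is expressed by the identities \eqref{EqTerm2} and is therefore inherited by subalgebras and subdirect products, the whole question is reduced to prime, indeed simple, Engel-type algebras of rank at most $n$.

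Here the packing hypothesis enters. For the rank in question it supplies a single prime algebra $A_n$, lying in one of the Engel varieties under consideration, such that every prime Engel algebra of that rank embeds into a central extension $K\otimes A_n$. Thus it is enough to prove that $A_n$, equivalently $K\otimes A_n$, is weakly nilpotent; the embedded prime components of $A/B_n$ then inherit weak nilpotence, and the lifting of the previous paragraph completes the argument. To establish weak nilpotence of this universal simple algebra I would run the deformation argument of Section~\ref{SbSbScPacking} on $A_n$: writing the map as $F=X-\sum_{\ell}\Psi_\ell$ and setting up the system $F_i(X_1,\dots,X_n)=Y_i$ with generic block-diagonal data $Y_i$ of the form~\eqref{EqMatrType}, the system has a unique formal power-series solution, while the Engel-type hypothesis—invertibility of the Jacobian, by Theorem~\ref{corresconj}—makes $F$ locally injective. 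If $A_n$ were not a Yagzhev algebra, equivalently if the reduced $F$ were not an automorphism, the solution would fail to be block diagonal, so conjugating by the matrices $E_\lambda^\ell$ would produce a nonconstant parametric family of solutions while fixing each $Y_i$, contradicting local injectivity; hence $A_n$ is a Yagzhev algebra.

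The hard part will be the last step: making the parametric-family contradiction fully rigorous inside the single algebra $A_n$ furnished by packing, and in particular checking that conjugation by $E_\lambda^\ell$ interacts correctly with the (possibly nonassociative) operator structure of the Engel-type algebra, so that non-block-diagonality of the solution genuinely yields a nontrivial deformation. This is precisely the point at which packing is indispensable: without it the various prime counterexamples of a given rank need not embed into one simple ambient algebra, and the uniform deformation argument—our only device for converting local injectivity of the Jacobian into weak nilpotence—would have no common arena in which to operate.
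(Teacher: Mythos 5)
Your overall strategy matches the paper's: reduce via Theorem~\ref{eqcond} to showing every Engel-type algebra satisfying Capelli identities is Yagzhev, pass to prime algebras of bounded rank, use the packing hypothesis to place them all inside central extensions of one universal Engel-type algebra, and then argue by deformations that this universal algebra must be Yagzhev. Your treatment of the reduction is in fact more explicit than the paper's (the Baer--Amitsur sequence, Lemma~\ref{LeSandvich} and Theorem~\ref{ThSandvich} to climb back up through sandwich layers, the rank theorem to pass to simple localizations); the paper leaves most of this implicit and instead spends its effort on the complementary embedding results (Theorem~\ref{3.8}), which it needs mainly for the \emph{finite} packing property.

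The one substantive divergence is the final step, and it is also where your proposal has a genuine gap. You propose to run the block-diagonal generic-matrix deformation of \S\ref{SbSbScPacking} on the universal algebra $A_n$. That argument is specific to endomorphisms of the free \emph{associative} algebra reduced modulo the $T$-ideal of generic matrices: the conjugating matrices $E_\lambda^\ell$ and the block structure live inside an ambient associative matrix ring, and there is no analogue of this for an algebra of arbitrary signature produced by the Yagzhev correspondence. What the paper actually intends at this point is different machinery: the ``variety of the pair'' $(A,\hat A)$, parametric families of \emph{embeddings} of an algebraic extension into the universal simple algebra (the ``Deformations of algebraic extensions'' construction), and the Razmyslov--Kushkulei theorem. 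You should also be aware that the paper itself does not close this step rigorously --- it culminates in the unproved assertion that ``considerations of deformations yield'' the final Proposition --- so your candor about the last paragraph being ``the hard part'' is warranted; but the specific tool you reach for there is the wrong one, transplanted from a setting where associativity and the matrix structure are doing essential work.
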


\begin{theorem}\label{ThWeakStsfy}
The set of varieties from the previous theorem satisfies the
finite packing property.
\end{theorem}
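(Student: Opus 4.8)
The plan is to reduce the statement, by means of the rank theorem, to the problem of embedding finitely many \emph{finite-dimensional simple} Engel algebras over a common field into a single prime algebra of the same type, and then to produce such a common prime algebra by an explicit amalgamation. Since embeddings compose, by induction it suffices to treat two algebras at a time, so I fix prime algebras $A_1,A_2$ in the family, of ranks $n_1,n_2$, Engel of orders $s_1,s_2$, satisfying Capelli of orders $n_1+1,n_2+1$.

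\textbf{Reduction to simple algebras over a common field.} First I would pass to central closures: by the rank theorem each prime $A_j$ of rank $n_j$ embeds into a simple algebra $S_j$ of dimension $n_j$ over its centroid field $F_j$, and since the Engel and Capelli conditions are defined by multilinear identities they descend to $S_j$ with the same orders. Hence it is enough to pack $S_1,S_2$. Next I would choose a single field $K$ admitting embeddings $F_1,F_2\hookrightarrow K$ (for instance $K$ algebraically closed of large transcendence degree) and replace $S_j$ by $\hat S_j=S_j\otimes_{F_j}K$. Base change preserves every multilinear identity, so each $\hat S_j$ is again Engel of order $s_j$ and satisfies Capelli of order $n_j+1$; taking $K$ to split $S_j$ keeps $\hat S_j$ simple of dimension $n_j$, and $S_j\hookrightarrow\hat S_j$. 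Thus the problem becomes: embed two finite-dimensional simple algebras $\hat S_1,\hat S_2$ over $K$, each Engel and Capelli, into one prime algebra of the same type.

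\textbf{The amalgamation.} Set $\goth N=\Var(\hat S_1,\hat S_2)$. Since $\dim_K\hat S_j=n_j< n+1$ with $n:=\max(n_1,n_2)$, each $\hat S_j$ satisfies the system of Capelli identities of order $n+1$, and each is Engel of order $s:=\max(s_1,s_2)$; hence $\goth N$ lies in the family ${\cal M}$. The naive container $\hat S_1\oplus\hat S_2$ is \emph{not} prime, since each summand is an ideal on which all cross-operations vanish. To repair this I would realize a common prime container as a generalized, Peirce-decomposed algebra whose underlying space is $\hat S_1\oplus M\oplus M'\oplus\hat S_2$, where $M$ is an $(\hat S_1,\hat S_2)$-bimodule and $M'$ an $(\hat S_2,\hat S_1)$-bimodule joined by nondegenerate multiplications $MM'\subseteq\hat S_1$ and $M'M\subseteq\hat S_2$, and extend every operator $\Psi_\ell$ to the whole space so that its restriction to the $\hat S_j$-block recovers the original operator. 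Nondegeneracy of the off-diagonal multiplications rules out nonzero ideals with vanishing product, forcing primeness, while the diagonal blocks display $\hat S_1,\hat S_2$ as subalgebras.

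\textbf{Main obstacle.} The hard part is to carry out this amalgamation \emph{inside} the Engel--Capelli family: the extended operators must keep all the $\Ad_\alpha$ nilpotent (so that the amalgam has some finite Engel order) and must retain a Capelli identity of bounded order, \emph{simultaneously} with primeness. This is exactly where the alternative and Jordan families break down, since there a rigid classification of prime algebras (e.g.\ $\mathbb{M}_2$ versus the Cayley--Dickson algebra) yields a genuine $\PI$-obstruction to any common prime envelope. For Engel type no such classification constrains us; I expect the off-diagonal part $M\oplus M'$ to behave like a sandwich ideal, so that nilpotence of the $\Ad_\alpha$ on the diagonal blocks propagates to the whole algebra (cf.\ Lemma~\ref{LeSandvich} and Theorem~\ref{ThSandvich}), providing a uniform Engel order, while the Capelli order is controlled by the total dimension $n_1+n_2+\dim M+\dim M'$. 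Verifying these three properties together for one explicit choice of the bimodules and the extended operators is the crux of the argument.
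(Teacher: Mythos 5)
There is a genuine gap: your argument stops exactly at the point where the work has to be done. You reduce to packing two simple algebras $\hat S_1,\hat S_2$ and then posit a Peirce-type amalgam $\hat S_1\oplus M\oplus M'\oplus \hat S_2$, but you never construct the bimodules $M,M'$ or the extended operators, and you yourself flag that verifying primeness, a uniform Engel order, and a Capelli bound \emph{simultaneously} for some explicit choice ``is the crux of the argument.'' That crux is the entire content of the theorem. In particular, there is no reason offered why the off-diagonal part should behave like a sandwich ideal, and Theorem~\ref{ThSandvich} runs in the wrong direction for your purposes anyway: it lifts the Yagzhev (weak nilpotence) property across a sandwich ideal, whereas what you need is that the \emph{new} operators you adjoin keep all the $\Ad_\alpha$ nilpotent. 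Checking that by hand for an ad hoc bimodule construction is precisely the difficulty your plan does not resolve.

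The idea you are missing is to work on the other side of the Yagzhev correspondence. The paper's proof is two lines: the direct sum of finitely many Engel-type algebras is again of Engel type (no primeness needed at this stage, and no reduction to pairs), and by Theorem~\ref{3.8} any Engel-type algebra embeds into a \emph{simple} Engel-type algebra. Theorem~\ref{3.8} in turn is proved not by building bimodules but by adjoining two indeterminates $t_1,t_2$ to the polynomial mapping $F$ associated to the algebra and composing $F$ with elementary automorphisms $G_{k_{ij}}$ that insert monomials $t_\ell x_j^{k_{ij}-1}$ of rapidly increasing degrees. Composition with automorphisms automatically preserves invertibility of the Jacobian, hence Engel type comes for free, and the inserted terms supply operators $\omega_{ij}$ carrying any basis vector to any other, which forces simplicity. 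This mechanism is what lets one obtain primeness, the Engel condition, and finite dimensionality (hence Capelli) all at once --- the three properties you could not verify together. Your preliminary reductions (central closure, common field $K$) are also unnecessary under this route.
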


Most of the remainder of this section is devoted to the proof of
these two theorems.

\medskip
{\bf Problem.} {\it Using  the packing property and deformations,
give a reasonable analog of the $\JC$ in nonzero characteristic.}
(The naive approach using only  the determinant of the Jacobian
does not work.)
\medskip

 \subsubsection{Construction of simple Yagzhev algebras}   

Using the Yagzhev correspondence and
composition of elementary automorphisms it is possible to
construct a new algebra of Engel type.

\begin{theorem}
Let $A$ be an algebra of Engel type. Then $A$ can be embedded into
a prime algebra of Engel type.
\end{theorem}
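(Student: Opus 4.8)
The plan is to realize the desired prime overalgebra as a \emph{free product} (coproduct) of $A$ with a free algebra, formed inside the variety cut out by the Engel-type identities, and then to force primeness by exploiting the genericity of the adjoined free generators.

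First I would observe that, for a fixed order $s$, the identities \eqref{EqMJ2} defining Engel type $s$ are multilinear (or become so after full linearization), so the class $\goth E_s$ of Engel-type-$s$ algebras of the given signature $\Omega$ is a variety, hence closed under subalgebras, homomorphic images, arbitrary direct products and coproducts. Fixing $s$ with $A\in\goth E_s$, let $\mathcal F=\mathcal F_{\goth E_s}(z_1,z_2,\dots)$ be the relatively free $\goth E_s$-algebra on countably many generators and form the coproduct $B=A\ast_{\goth E_s}\mathcal F$; being a coproduct in $\goth E_s$, $B$ is again of Engel type $s$. The canonical map $A\to B$ is injective: since the operators $\Psi_\ell$ are multilinear they vanish whenever an argument is $0$, so $a\mapsto(a,0)$ and $f\mapsto(0,f)$ realize $A$ and $\mathcal F$ as subalgebras of the direct product $A\times\mathcal F\in\goth E_s$, and the universal property of the coproduct factors the inclusion $A\hookrightarrow A\times\mathcal F$ through $A\to B$, forcing the latter to be injective. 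Thus $A$ is a subalgebra of the Engel-type algebra $B$.

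It remains to pass from $B$ to a prime algebra still containing $A$, and here I would read the free generators $z_i$ through the Yagzhev correspondence: they play the role of the extra coordinates one introduces when composing the polynomial endomorphism attached to $A$ with elementary automorphisms, which have Jacobian $1$ and therefore preserve Engel type by Theorem~\ref{corresconj}. Concretely, among all ideals $N\trianglelefteq B$ with $N\cap A=0$ I would choose, by Zorn's lemma, a maximal one $N_0$, so that $A\hookrightarrow P:=B/N_0$ with $P\in\goth E_s$. I would then prove $P$ prime by using the generic elements to \emph{link} arbitrary ideals: given nonzero ideals $\bar I_1,\bar I_2$ of $P$, maximality of $N_0$ yields nonzero $a_i\in I_i\cap A$, and the presence of the $z_i$ produces a multilinear monomial in $a_1$, $a_2$ and suitable $z_i$ that does not lie in $N_0$, whence $\bar I_1\bar I_2\neq 0$.

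The main obstacle is exactly this last step. Engel-type algebras are by their very nature close to nilpotent, and a nonzero nilpotent algebra is as far from prime as possible, so primeness cannot be extracted from the identities \eqref{EqMJ2} alone; it must be manufactured by the added generic structure while remaining compatible with those identities (in particular without disturbing the nilpotence of the relevant $\Ad$-operators). The delicate point is to verify that the free generators genuinely furnish a nonzero linking monomial for \emph{every} pair of nonzero ideals — equivalently, that the maximal ideal $N_0$ disjoint from $A$ is actually prime. I expect this to be carried out by the generic-matrix / block-diagonal deformation technique used repeatedly in the paper: recalling that the algebra of generic matrices is prime, one substitutes generic parameters for the $z_i$, so that any putative annihilation $\bar I_1\bar I_2=0$ becomes the identical vanishing of a nonzero polynomial in those parameters, a contradiction.
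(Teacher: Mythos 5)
Your reduction to a prime quotient does not go through: the maximal ideal $N_0$ produced by Zorn's lemma need not avoid the adjoined generators, and in fact it can swallow them entirely. By the universal property of the coproduct there is a retraction $\pi\colon B\to A$ which is the identity on $A$ and kills $\mathcal F$; its kernel $K$ satisfies $K\cap A=0$ and contains the ideal $J$ generated by all the $z_i$. Hence $J$ is itself an ideal disjoint from $A$, and one admissible choice of $N_0$ contains $J$, in which case every $z_i$ vanishes in $P=B/N_0$, the image of $A$ generates $P$, and $P\cong A$ --- nothing has been gained. The same observation shows the hoped-for linking step is false as stated: for this choice of $N_0$ \emph{every} monomial with an entry in $J$ lies in $N_0$, so no monomial $M(a_1,a_2,\vec z)$ can witness $\bar I_1\bar I_2\neq 0$. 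The difficulty is structural, not technical: generators adjoined freely, subject only to the identities of the variety $\goth E_s$, carry no obligation to multiply nontrivially against $A$, so they cannot link two ideals of $A$ whose mutual products inside $A$ vanish. Primeness must be manufactured by prescribing explicit nonzero operations tying the new elements to $A$, and the appeal to ``generic substitution'' does not supply them (that technique is used in the paper for reductions of endomorphisms of free algebras modulo $T$-ideals, not for coproducts of Engel algebras).

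This is precisely what the paper's proof does, and the contrast is instructive. The paper stays inside the Yagzhev correspondence: it adjoins finitely many new coordinates $t_0,\dots,t_n$, extends $F$ by the identity on them, and composes with the elementary triangular automorphism $G\colon t_i\mapsto t_i+t_0x_i^2$. Since $G$ has Jacobian $1$, the composite $F\circ G$ still has Jacobian $1$, so by Theorem~\ref{corresconj} the associated algebra $\widehat A$ is again of Engel type; it contains $A$ as the subalgebra $t_0=0$; and the concrete ternary operations encoded by the terms $t_0x_i^2$ connect every element to the new coordinates, which is what rules out a pair of nonzero ideals with zero product. Your first paragraph (that $\goth E_s$ is a variety and that $A\to B$ is injective) is fine; what is missing is that the work must be done by an explicit choice of new operations, not by the genericity of free generators.
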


\Proof Consider the  mapping $F:V\to V$  (cf.~\eqref{Eq1Fk}) given by
$$F:\
x_i\mapsto x_i+\sum_j\Psi_{ij};\quad i=1,\dots,n$$ (where the
$\Psi_{ij}$ are forms of homogenous degree $j$). Adjoining new
indeterminates $\{t_i\}_{i=0}^n$, we put $F(t_i)=t_i$ for $
i=0,\dots,n$.

Now we  take the transformation
$$
G:\ t_0\mapsto t_0,\quad x_i\mapsto x_i,\quad  \ t_i\mapsto
t_i+t_0x_i^2,\quad\mbox{for}\quad  i=1,\dots,n.
$$
The composite $F\circ G$ has invertible Jacobian (and hence the
corresponding algebra has Engel type) and can be expressed as
follows:
$$
F\circ G:\ x_i\mapsto x_i+\sum_j\Psi_{ij},\quad \ t_0\mapsto
t_0,\quad t_i\mapsto t_i+t_0x_i^2\quad \mbox{for}\quad i=1,\dots,n.
$$
It is easy to see that the corresponding algebra $\widehat{A}$ also
satisfies the following properties:

\begin{itemize}
\item $\widehat{A}$ contains $A$ as a subalgebra (for $t_0=0$).
\item If $A$ corresponds to a cubic homogenous mapping (and thus
is Engel) then $\widehat{A}$ also corresponds to a cubic
homogenous mapping (and thus is Engel). \item If some of the forms
$\Psi_{ij}$ are not zero, then $A$ does not have nonzero ideals
with product 0, and hence is prime (but its localization need not
be simple!).
\end{itemize}

Any algebra $A$ with  operators  can be embedded, using the
previous construction, to a prime algebra with nonzero
multiplication.   The theorem is proved. \Endproof

Embedding via the previous theorem preserves the cubic homogeneous
case, but does not yet  give us an embedding into a simple algebra
of Engel type.

\begin{theorem}\label{3.8}
Any algebra $A$   of Engel type  can be embedded into a simple
algebra  of Engel type.
\end{theorem}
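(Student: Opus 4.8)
The plan is to bootstrap from the preceding theorem by means of the \emph{rank theorem} \cite{RazmyslovBook}. First I would apply the previous theorem to embed $A$ into a prime algebra $B$ of Engel type. In the setting running through this section the algebras satisfy a system of Capelli identities, and this survives the construction: the embedding $A\hookrightarrow B$ only adjoins the finitely many indeterminates $t_0,\dots,t_n$, so if $A$ satisfies a system of Capelli identities of order $k$ then $B$ satisfies one of order $k+(n+1)$ (in particular finite-dimensionality, when present, is preserved). Thus $B$ is prime of some finite rank, and the task reduces to passing from prime to simple while keeping Engel type.

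For this I would localize. By the rank theorem the centroid $Z$ of the prime algebra $B$ is an integral domain, and the central localization $\widetilde B$ obtained by inverting the nonzero elements of $Z$ is a \emph{simple} algebra, finite-dimensional over the field $\mathrm{Frac}(Z)$, which is its centroid. Since $B$ is prime, $Z$ acts on $B$ without torsion, so the canonical map $B\to\widetilde B$ is injective; composing with $A\hookrightarrow B$ yields an embedding $A\hookrightarrow\widetilde B$ into a simple algebra. All that remains is to verify that $\widetilde B$ is again of Engel type.

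This verification is the crux. The Engel type condition \eqref{EqMJ2} is, for each fixed $x$, a homogeneous relation encoding nilpotence of the operators $\Ad_{\alpha}(x)$; in the cubic case it is simply $\Ad_{xx}^{s/2}=0$. In characteristic zero this homogeneous identity is equivalent, via full polarization, to its complete linearization, which is a \emph{multilinear} identity in the signature $\Omega$. Multilinear identities pass to central localizations automatically: writing each argument of such an identity as $b/z$ with $b\in B$ and $z\in Z\setminus\{0\}$, multilinearity pulls every denominator out in front, so an identity holding on $B$ is forced to hold on $\widetilde B$. Hence $\widetilde B$ satisfies the linearized Engel identities, and in characteristic zero polarization is reversible, returning the homogeneous Engel condition; so $\widetilde B$ is of Engel type, completing the chain $A\hookrightarrow B\hookrightarrow\widetilde B$ into a simple Engel-type algebra.

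The main obstacle is exactly this interchange of localization with the nilpotence-of-operators formulation of Engel type: one must be certain that ``Engel type'' is genuinely a (linearized) identity, stable under extension of the centroid to its fraction field, rather than a property of the operator algebra of $B$ that could be destroyed upon inverting central elements. The polarization argument disposes of this in characteristic zero. In positive characteristic polarization need not be reversible, and one would instead have to check directly that the operators $\Ad_{\alpha}(x)$ remain nilpotent over $\widetilde B$ — precisely the kind of inseparability subtlety already visible in \S\ref{SbScEngNoWNilpCharpos}.
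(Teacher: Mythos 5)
Your reduction of ``prime'' to ``simple'' by central localization is where the argument breaks down, and the paper itself flags the danger: in the proof of the preceding embedding theorem the authors remark of the prime algebra they construct that its localization \emph{need not be simple}. Concretely, in the setting at hand the algebras produced by the Yagzhev correspondence are finite dimensional, and the centroid $Z$ of a finite-dimensional prime algebra is a finite-dimensional commutative domain over $\k$, hence already a field; inverting its nonzero elements therefore returns $B$ itself, and your argument collapses to the assertion that a prime algebra of arbitrary signature is simple --- which is not established, and is exactly the gap Theorem~\ref{3.8} exists to close. The rank theorem quoted in \S\ref{pack1} does not rescue this step as stated: its ``localization'' is a central-closure construction whose applicability (minimality of the Capelli order, behaviour of the operator algebra, injectivity of $B$ into the closure) you have not verified, and the authors evidently do not regard it as available here. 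The part of your argument that is sound --- that multilinear identities, hence the linearized Engel identities \eqref{EqMJ2} in characteristic zero, pass to central extensions --- is not where the difficulty lies.

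The paper's actual proof is constructive and stays inside the Yagzhev correspondence. It first establishes a simplicity criterion: if for every pair of base vectors $\vec{e}_i,\vec{e}_j$ there is an operator $\omega_{ij}$ in the signature with $\omega_{ij}(\vec{e}_i,\dots,\vec{e}_i,\vec{e}_{n+\ell})=\vec{e}_j$ and all other values on base vectors zero, then the algebra is simple. Then, after adjoining two new indeterminates $t_1,t_2$ fixed by $F$, one composes $F$ with elementary automorphisms $G_{k_{ij}}$ that insert terms $x_j^{k_{ij}-1}t_1$ (and analogous terms interchanging $t_1$ and $t_2$), with the exponents $k_{ij}$ growing fast enough that each inserted term is the \emph{unique} term of its degree and is undisturbed by the later compositions. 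The composite still has Jacobian $1$, since elementary automorphisms do, so the corresponding algebra remains of Engel type, while the uniqueness of degrees supplies exactly the operators $\omega_{ij}$ demanded by the simplicity criterion. To repair your proof you would need either an argument of this explicit kind or a genuine verification that the relevant central closure is simple, receives $B$ injectively, and inherits the Engel identities.
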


\Proof We  start from the following observation:

\begin{lemma}
Suppose $A$ is a finite dimensional algebra, equipped with a base
$\vec{e}_1,\dots,\vec{e}_n,\vec{e}_{n+1}$. If for any $1\le i,
j\le n+1$ there exist operators $\omega_{ij}$ in the signature
$\Omega(A)$ such that
$\omega_{ij}(\vec{e}_i,\dots,\vec{e}_i,\vec{e}_{n+1})=\vec{e}_j$,
with all other values on the base vectors being zero, then $A$ is
simple.
\end{lemma}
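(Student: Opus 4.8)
The plan is to prove simplicity directly: I would show that $A$ has no proper nonzero ideal by using the operators $\omega_{ij}$ to manufacture every basis vector out of a single nonzero element of a given ideal. Recall that an ideal $I$ of an algebra with multilinear operators is a subspace that is stable under every operator as soon as at least one argument lies in $I$. Hence it suffices, starting from an arbitrary nonzero $a\in I$, to produce all of $\vec{e}_1,\dots,\vec{e}_{n+1}$ inside $I$, for then $I=A$.

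So let $I\neq 0$ be an ideal, fix $0\neq a\in I$, and write $a=\sum_{k=1}^{n+1}c_k\vec{e}_k$ with some coefficient $c_i\neq 0$. The key computation is to evaluate, for each $j$, the operator $\omega_{ij}$ (of arity $m_{ij}$, say) on the tuple consisting of $m_{ij}-1$ copies of $a$ followed by $\vec{e}_{n+1}$. Expanding by multilinearity produces a sum of terms $\omega_{ij}(\vec{e}_{k_1},\dots,\vec{e}_{k_{m_{ij}-1}},\vec{e}_{n+1})$ weighted by $c_{k_1}\cdots c_{k_{m_{ij}-1}}$. By the hypothesis each of these values on basis vectors is zero except the one with all indices equal to $i$, whose value is $\vec{e}_j$. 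Therefore $\omega_{ij}(a,\dots,a,\vec{e}_{n+1})=c_i^{\,m_{ij}-1}\,\vec{e}_j$. Since $a\in I$ occupies at least one argument, the left-hand side lies in $I$; and since $c_i\neq 0$ in a field, we conclude $\vec{e}_j\in I$.

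Running this over all $j=1,\dots,n+1$ (the hypothesis supplies an operator $\omega_{ij}$ for our fixed $i$ and every $j$) shows that $I$ contains the entire basis, so $I=A$. As the operators $\omega_{ij}$ are patently nonzero, $A$ is not a trivial zero-multiplication algebra, and hence $A$ is simple.

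The argument is essentially immediate once the correct element is fed into the operators, so I do not expect a serious obstacle; the only points demanding care are that the hypothesis ``all other values on the base vectors are zero'' really does annihilate every term of the multilinear expansion except the single surviving one, and that it is the retention of a copy of $a$ (in place of $\vec{e}_i$) in some slot that forces the result to lie in $I$. This last point implicitly uses that each operator has arity at least $2$, as is the standing convention for the operators $\Psi_\ell$ here.
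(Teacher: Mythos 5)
Your proof is correct; the paper states this lemma only as an ``observation'' with no proof supplied, and your argument --- feeding $m_{ij}-1$ copies of a nonzero element $a$ of the ideal into $\omega_{ij}$ together with $\vec{e}_{n+1}$, then using multilinearity and the vanishing hypothesis to isolate the single surviving term $c_i^{\,m_{ij}-1}\vec{e}_j$ --- is evidently the intended one. The only delicate points (that every non-distinguished evaluation on base vectors vanishes, that the arity is at least $2$ so a copy of $a$ genuinely occupies a slot, and that $A$ has nonzero multiplication) are ones you already address explicitly.
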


This lemma implies:

\begin{lemma}
Let $F$ be a polynomial endomorphism of ${\mathbb
C}[x_1,\dots,x_n;t_1,t_2]$, where $$F(x_i)=\sum_j \Psi_{ij}.$$ For
notational convenience we put $x_{n+1}=t_1$ and $x_{n+2}=t_2$. Let
$\{k_{ij}\}_{i=1,j}^s$ be a set of natural numbers such that
\begin{itemize}
\item For any $x_i $ there exists $k_{ij}$ such that among all
$\Psi_{ij}$ there is {\em exactly one} term of degree $k_{ij}$,
and it has the form $\Psi_{i,k_{ij}}=t_1x_j^{k_{ij}-1}$.

\item For $t_2$ and any $x_i$ there exists $k_{iq}$ such that
among all $\Psi_{ij}$ there is {\em exactly one} term of degree
$k_{iq}$, and it has the form
$\Psi_{n+2,k_{iq}}=t_1x_j^{k_{iq}-1}$.

\item For $t_1$ and any $x_i$ there exists $k_{iq}$ such that among
all $\Psi_{ij}$ there is {\em exactly one} term of degree
$k_{iq}$, and it has the form
$\Psi_{n+1,k_{iq}}=t_2x_j^{k_{iq}-1}$.
\end{itemize}
Then the corresponding algebra is simple.

\end{lemma}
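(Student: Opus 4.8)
The plan is to deduce this Lemma from the preceding simplicity criterion by interpreting each of its three hypotheses as a statement about the polarized operators of the Yagzhev algebra $A$ associated to $F$. The arity-$k$ operator $\Psi_k$ is the polarization of the degree-$k$ homogeneous part of $F$, so its values on the basis $\{\vec e_s\}_{s=1}^{n+2}$ are exactly the degree-$k$ monomials of $F$ (with nonzero combinatorial constants, since we are in characteristic zero). Hence the hypothesis that the \emph{only} degree-$k_{ij}$ monomial occurring anywhere in $F$ is $\Psi_{i,k_{ij}}=t_1x_j^{k_{ij}-1}$ says precisely that $\Psi_{k_{ij}}$ is \emph{single-supported}: it vanishes on every basis input except the multiset consisting of one $t_1=\vec e_{n+1}$ and $k_{ij}-1$ copies of $\vec e_j$, where it returns $c\,\vec e_i$ with $c\neq 0$. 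Reading the three hypotheses this way, I obtain single-supported transition operators realizing $\vec e_j,\dots,\vec e_j,t_1\mapsto \vec e_i$ (first hypothesis), $\vec e_j,\dots,\vec e_j,t_1\mapsto t_2$ (second), and $\vec e_j,\dots,\vec e_j,t_2\mapsto t_1$ (third).

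With these operators in hand I would run the ideal argument that proves the preceding Lemma. Let $I\neq 0$ be an ideal and choose $0\neq u=\sum_s c_s\vec e_s\in I$. If $c_j\neq 0$ for some $j\leq n$, then evaluating a first-hypothesis operator on an explicit $t_1$ together with $k_{ij}-1$ copies of $u$ gives, by multilinearity and single support,
$$
\Psi_{k_{ij}}(t_1,u,\dots,u)=c_j^{\,k_{ij}-1}\,c\,\vec e_i\in I,
$$
so $\vec e_i\in I$ for every $i\le n$; then the second hypothesis yields $t_2\in I$ and the third yields $t_1\in I$, whence $I=A$. If instead $c_j=0$ for all $j\le n$, I would feed $u$ into a single slot and basis vectors $\vec e_j$ into the rest: single support extracts the $t_1$-coefficient, $\Psi_{k_{ij}}(u,\vec e_j,\dots,\vec e_j)=c_{n+1}\,c\,\vec e_i$, reducing to the previous case when $c_{n+1}\neq 0$, while the third-hypothesis operator converts a surviving $t_2$-component into $t_1$. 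Thus every nonzero ideal is all of $A$, and $A$ is simple.

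The hard part will be the bookkeeping around the two catalysts rather than any single computation. Unlike the preceding Lemma, here there are $n+2$ basis vectors and the distinguished vector $t_1$ cannot be regenerated using $t_1$ itself; it is precisely the second and third hypotheses that weld $t_1$ and $t_2$ into the web built among the $x_i$ by the first hypothesis, making the transition graph on the whole basis strongly connected and always permitting a return to a usable catalyst. The two points demanding genuine care are (i) that the uniqueness hypothesis must be read \emph{globally} --- one monomial of degree $k_{ij}$ in all of $F$, not merely in coordinate $i$ --- since only then is $\Psi_{k_{ij}}$ single-supported and the multilinear extraction free of cross terms, and (ii) that characteristic zero is used twice, to keep the polarization constants and the scalars $c_j^{\,k_{ij}-1}$ nonzero. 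Once these are settled, the hypotheses of the preceding Lemma are effectively verified with $t_1$ as distinguished vector, and simplicity follows.
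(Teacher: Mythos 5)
Your proposal is correct and follows the same route as the paper: read each hypothesis as saying that the corresponding polarized operator is supported on a single multiset of basis vectors (one catalyst $t_\ell$ plus copies of $\vec e_j$, output $\vec e_i$), and then invoke the preceding simplicity criterion. The paper's own proof is just this two-line reduction to the previous lemma; your spelled-out ideal-chasing argument is the (omitted) proof of that lemma, and your two cautionary points --- global uniqueness of the degree $k_{ij}$ and characteristic zero --- are exactly the right ones.
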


\Proof Adjoin the term $t_\ell x_i^{k-1}$   to the $x_i$, for $\ell = 1,2$. Let $e_i$
  be the base vector  corresponding to $x_i$. Take the
  corresponding
$k_{ij}$-ary operator $$\omega:
\omega(\vec{e}_i,\dots,\vec{e}_i,\vec{e}_{n+\ell}))=\vec{e}_j,$$ with
all other products   zero. Now we apply the previous lemma.
 \Endproof

\medskip
{\bf Remark.} In order to be flexible with constructions via the
Yagzhev correspondence, we are working in the general, not
necessary cubic, case.
\medskip

Now we can conclude the proof of Theorem~\ref{3.8}. Let $F$ be the
mapping corresponding to the algebra $A$:

$$F:\
x_i\mapsto x_i+\sum_j\Psi_{ij},\quad i=1,\dots,n,$$ where
$\Psi_{ij}$ are forms of homogeneous degree $j$. Let us adjoin new
indeterminates $\{t_1, t_2\}$ and put $F(t_i)=t_i,$ for $ i=1, 2$.

We choose all $ k_{\alpha,\beta}>\max(\deg(\Psi_{ij}))$ and assume
that these numbers are sufficiently large. Then we consider the
mappings
$$
G_{k_{ij}}: x_i\mapsto x_i+x_j^{k_{ij}-1}t_1,\ i\le n;\quad
t_1\mapsto t_1;\quad t_2\mapsto t_2;\quad x_s\mapsto x_s\
\mbox{for}\ s\ne i.
$$
$$
G_{k_{i(n+2)}}: t_2\mapsto x_i^{k_{ij}-1}t_1;\quad t_1\mapsto
t_1;\quad x_s\mapsto x_s\ \mbox{for}\  1\le s\le n.
$$
$$
G_{k_{i(n+1)}}: t_1\mapsto x_i^{k_{ij}-1}t_2;\quad t_2\mapsto
t_2;\quad x_s\mapsto x_s\ \mbox{for}\  1\le s\le n.
$$
These mappings are elementary automorphisms.

Consider the mapping $H=\circ_{k_{ij}}G_{k_{ij}}\circ F$, where
the composite is taken in order of ascending $k_{\alpha\beta}$,
and then with $F$.
 If the $k_{\alpha\beta}$ grow quickly enough, then the terms
 obtained in the previous step do not affect the
lowest term obtained at the next step, and this term will be as
described in the lemma.  The theorem is proved. \Endproof

\medskip
{\bf Proof of Theorem \ref{ThWeakStsfy}.} The direct sum of Engel
type algebras  is also of Engel type, and by Theorem~\ref{3.8} can
be embedded
 into a simple algebra of Engel type.
\Endproof

\paragraph{The Yagzhev correspondence and algebraic extensions.}$ $

For notational simplicity, we  consider a cubic homogeneous
mapping
$$
F:\ x_i\mapsto x_i+\Psi_{3i}(\vec{x}).
$$
We shall construct the Yagzhev correspondence of an algebraic
extension.

Consider the equation
$$
t^s=\sum_{p=1}^s\lambda_pt^{s-p},
$$
where the $\lambda_p$ are formal parameters. If $m\ge s$, then for
some $\lambda_{pm}$, which  can be expressed as polynomials in
$\{\lambda_p\}_{p=1}^{s-1}$,  we have
$$
t^m= \sum_{p=1}^s\lambda_{pm}t^{s-p}.
$$

Let $A$ be the algebra corresponding to the mapping $F$. Consider
$$A\otimes \k[\lambda_1,\dots,\lambda_s]$$ and its finite algebraic
extension $\hat{A}=A\otimes \k[\lambda_1,\dots,\lambda_s,t]$. Now
we take the mapping corresponding (via the Yagzhev correspondence)
to the ground ring $R=\k [\lambda_1,\dots,\lambda_s]$ and algebra
$\hat{A}$.

For $m=1,\dots,s-1$, we define new formal indeterminates, denoted
as~$T^mx_i$.  Namely, we put $T^0x_i=x_i$ and for $m\ge s$, we
identify $ T^mx_i$ with $ \sum_{p=1}^s\lambda_{pm}T^{s-p}x_i $,
where $\{\lambda_p\}_{p=1}^{s-1}$ are formal parameters in the
 centroid of some extension $R\otimes A$.
Now we extend the mapping~$F$, by putting
$$
F(T^mx_i)=T^mx_i+T^{3m}\Psi_{3i}(\vec{x}),\quad m=1,\dots,s-1.
$$
We get a natural mapping corresponding to the algebraic
extension.

Now we can take more symbols $T_j$, $ j=1,\dots, s$, and equations
$$
T_j^s=\sum_{p=1}^s\lambda_{pj}T_j^{s-p}
$$
and a new set of indeterminates $x_{ijk}=T_j^kx_i$ for $
j=1,\dots,s$ and $ i=1,\dots, n$. Then we put
$$
x_{ijm}=T_j^mx_i= \sum_{p=1}^s\lambda_{jpm}T_j^{s-p}x_i
$$
and
$$
F(x_{ijm})=x_{ijm}+T_j^{3m}\Psi_{3i}(\vec{x}),\quad m=1,\dots,s-1.
$$
This yields an ``algebraic extension'' of $A$.

\paragraph{Deformations of algebraic extensions.} Let $m=2$. Let us
introduce new indeterminates $y_1,y_2$, put $F(y_i)=y_1, \ i=1,2$,
and compose $F$ with the automorphism $$G: T_1^1x_i\mapsto
T_1^1x_i+y_1x_i, \quad T_1^1x_i\mapsto T_2^1x_i+y_1x_i, \quad x_i \mapsto x_i,    \quad i = 1,2,$$ $$ y_1\mapsto
y_1+y_2^2y_1,\quad  y_2\mapsto y_2. $$ (Note that the $T_1^1x_i$
and $T_2^1x_i$ are {\it new} indeterminates and not proportional
to $x_i$!) Then compose $G$ with the automorphism $H: y_2\mapsto
y_2+y_1^2$, where $H$  fixes the other indeterminates. Let us call
the corresponding new algebra $\hat{A}$. It is easy to see that
$\Var(A)\ne\Var(\hat{A})$.


 Define an  {\it identity of the pair} $(A,B)$, for
$A\subseteq B$ to be a polynomial in two sets of indeterminates
$x_i, z_j$ that vanishes whenever the $x_i$ are evaluated in $A$
and $z_j$ in $B$.) The {\it variety of the pair} $(A,B)$ is the
class of  pairs of algebras   satisfying the identities of
$(A,B)$.

 Recall that by  the rank theorem, any prime algebra $A$ of rank $n$
can be embedded into an $n$-dimensional simple algebra $\hat{A}$.
We consider the variety of the  pair $(A,\hat{A})$.

Considerations of deformations yield the following:

\begin{statement}
Suppose for all simple $n$-dimensional pairs there exists a
universal pair in which all of them can be embedded. Then the
Jacobian Conjecture has a positive solution.
\end{statement}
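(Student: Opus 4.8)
The plan is to imitate the deformation argument already used for the packing property, now carried out inside the single universal pair supplied by the hypothesis. By Theorem~\ref{eqcond}, in characteristic zero the $\JC$ is equivalent to the assertion that every algebra $A$ of Engel type satisfying a system of Capelli identities is a Yagzhev algebra, so it suffices to establish weak nilpotence for such an $A$. First I would strip off the solvable radical: in the Capelli setting the Baer--Amitsur sequence stabilizes, and by Lemma~\ref{LeSandvich} the radical $\Nilp(A)$ is assembled from a finite chain of sandwich ideals, across each of which Theorem~\ref{ThSandvich} lets one lift the Yagzhev property. Hence it is enough to prove that the semiprime, representable quotient $A/\Nilp(A)$ is Yagzhev.

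Next I would decompose $A/\Nilp(A)$ as a subdirect product of its prime components. Because $A$ satisfies a fixed system of Capelli identities of some order $n+1$, every prime component has rank at most $n$, and the rank theorem embeds each such component $A_i$ into an $n$-dimensional simple algebra $\hat A_i$ over its centroid, a field. This produces, for each component, a simple $n$-dimensional pair $(A_i,\hat A_i)$. Invoking the hypothesis, all of these pairs embed into one universal pair $(U,\hat U)$, with $\hat U$ simple and finite dimensional over a field. The advantage over the bare packing property is that we now transport both the algebra and its simple envelope simultaneously, so that the Engel (Jacobian) identities and the tentative Yagzhev identities can be tested against one and the same finite-dimensional arena $\hat U$.

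With everything realized inside $(U,\hat U)$ I would run the deformation argument verbatim. Let $F$ be the polynomial mapping whose Yagzhev algebra is $A$; its differential is invertible precisely because $A$ is of Engel type. Substituting generic elements of $\hat U$ for the variables and solving $F(\vec X)=\vec Y$ by iterating $\vec X=\vec Y+\sum(\text{higher order})$ yields a unique solution whose entries are distinct commuting indeterminates. If $A$ were not Yagzhev, i.e.\ if $F$ were not invertible, then conjugating by the block matrices $E_\lambda^\ell$, which fix each generic $Y_i$ but move a non-block-diagonal $X_j$, would produce a genuine one-parameter family of solutions; this contradicts the fact that invertibility of the differential makes $F$ locally one-to-one. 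Therefore $A/\Nilp(A)$, and hence $A$, is Yagzhev, and the $\JC$ follows.

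The hard part will be the last paragraph: justifying that the block-diagonal conjugation trick still manufactures a nonconstant parametric family of solutions when $\hat U$ is only an abstract $n$-dimensional simple algebra over its centroid rather than a literal matrix algebra. One must verify that the centroid is a field over which $\hat U$ splits well enough for the $E_\lambda^\ell$ to act as inner automorphisms fixing the generic $Y_i$ while moving some $X_j$, and that the resulting family is nonconstant exactly when the solution leaves the block-diagonal (equivalently, the prime) subalgebra. It is here that the universality of the pair is indispensable, since it guarantees a single finite-dimensional simple target in which all prime components can be deformed at once.
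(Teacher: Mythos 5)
Your reconstruction follows exactly the route the paper intends: reduce via Theorem~\ref{eqcond} to showing that Engel type plus Capelli implies Yagzhev, peel off the radical with Lemma~\ref{LeSandvich} and Theorem~\ref{ThSandvich}, pass to prime components and their $n$-dimensional simple envelopes via the rank theorem, and then run the deformation/parametric-family argument inside the single universal pair. Be aware, though, that the paper itself supplies no proof of this proposition --- it is introduced only by the phrase ``Considerations of deformations yield the following'' --- so there is no written argument to check yours against; what you give is the natural elaboration of that remark and of the earlier (also unproved in detail) theorem deducing the $\JC$ from the packing property.

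The gap you flag in your last paragraph is genuine and is not closed anywhere in the paper. The block-diagonal conjugation trick is carried out in the text only for the free associative algebra reduced modulo the $T$-ideal of generic matrices, where $E_\lambda^\ell$ literally acts by conjugation and visibly fixes the block-diagonal $Y_i$ while moving a non-block-diagonal $X_j$. For an abstract $n$-dimensional simple algebra of arbitrary signature over its centroid there is no a priori notion of conjugation, no guarantee that the relevant deformations are automorphisms fixing the $Y_i$, and no argument that the resulting family is nonconstant exactly when the solution escapes the prime subalgebra; this is where the Razmyslov--Kushkulei circle of ideas the paper alludes to would have to enter. A second point you pass over quickly: to descend the Yagzhev property from the universal pair to the subdirect product $A/\Nilp(A)$ you need the weak nilpotence class to be \emph{uniform} over all prime components, which is precisely what embedding them all into one finite-dimensional $U$ buys you, and it deserves to be said explicitly. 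As it stands your text is a faithful expansion of the paper's sketch rather than a complete proof --- but the same must be said of the paper.
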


We see the relation with

\medskip
{\bf The Razmyslov--Kushkulei theorem}~\cite{RazmyslovBook}: \ {\it Over an
algebraically closed field, any two finite dimensional simple
algebras satisfying the same identities are isomorphic.}
\medskip

The difficulty in applying this theorem is that the identities may
depend on parameters. Also, the natural generalization of the
Rasmyslov--Kushkulei theorem for a variety and subvariety does not
hold: Even if $\Var(B)\subset\Var(A)$, where $B$ and $A$ are
simple finite dimensional algebras over some algebraically closed
field,
  $B$ need not be embeddable to $A$.
%

\bigskip

{\bf Addresses}

\medskip

 \noindent
Alexei Belov: Bar-Ilan University

\smallskip

\noindent {\it e-mail address\/}: ~beloval@math.biu.ac.il

\medskip

\noindent Leonid Bokut: Sobolev Institute of Mathematics,
Novosibirsk,

South China Normal University, Guangzhou

\smallskip

\noindent {\it e-mail address\/}:~bokut@math.nsc.ru

\medskip

\noindent Louis Rowen: Bar-Ilan University

\noindent {\it e-mail address\/}: ~rowen@math.biu.ac.il

\medskip

\noindent Jie-Tai Yu: Department of Mathematics, The University of
Hong Kong, Hong Kong SAR, China

\noindent {\it e-mail address\/}: yujt@hku.hk,\ yujietai@yahoo.com

\bigskip

\end{document}